\newcommand{\N}{\mathbbm{N}}
\newcommand{\compconj}[1]{\overline{#1}}
\newtheorem{thm}{Theorem}[section]
\newtheorem{prop}[thm]{Proposition}
\newtheorem{lemma}[thm]{Lemma}
\newtheorem{cor}[thm]{Corollary}
\newtheorem{defn}[thm]{Definition}
\newtheorem{Rem}[thm]{Remark}
\newtheorem{?}[thm]{Problem}
\theoremstyle{plain}
\theoremstyle{definition}
\begin{document}

\title{An analogue of Weil's converse theorem for harmonic Maass forms of polynomial growth}

\author{Karam Deo Shankhadhar, Ranveer Kumar Singh}

\address[Karam Deo Shankhadhar]{Department of Mathematics, Indian Institute of Science Education and Research Bhopal,
Bhopal Bypass Road,  Bhauri,
Bhopal 462 066,
Madhya Pradesh, India}
\email{karamdeo@iiserb.ac.in, karamdeo@gmail.com}

\address[Ranveer Kumar Singh]{{NHETC, Department of Physics and Astronomy, Rutgers University,
126 Frelinghuysen Rd., Piscataway NJ08855, USA}}
\email{ranveersfl@gmail.com}

\subjclass[2010]{Primary 11F12, 11F25; Secondary 11F66, 11M36}

\keywords{Harmonic Maass forms, differential operators, Dirichlet series, converse theorem}

%\date{December 25, 2020}

\maketitle

\begin{abstract}
\noindent We construct a family of harmonic Maass forms of polynomial growth of any level 
corresponding to any cusp whose shadows are Eisenstein series of integral weight. 
We further consider Dirichlet series attached to a 
harmonic Maass form of polynomial growth, study its analytic properties, and prove an 
analogue of Weil's converse theorem.

%We construct a family of examples of Harmonic Maass forms of manageable growth of any level 
%whose shadows are Eisenstein series of integral weight. We consider a subspace of Harmonic Maass forms 
%of manageable growth consisting of these examples and study Dirichlet series attached to them. 
%Moreover, we establish an analogue of Weil's converse theorem for this interesting subspace. 
%its isomorphism with the space of cusp forms of weight $2-k$ for $\mathrm{SL}(2,\mathbb{Z})$. We then %formulate and prove analogues of Hecke's and Weil's converse theorem.   
\end{abstract}

\section{Introduction}
Harmonic Maass forms and mock modular forms were first glimpsed in the enigmatic ``deathbed" 
letter that Ramanujan wrote to Hardy in 1920. For many decades, very little was understood about the functions written in this letter and no comprehensive theory was available to explain them. Finally, Zwegers \cite{SZ} in his thesis showed that Ramanujan's mock theta functions can be realized as holomorphic parts of some special families of nonholomorphic modular forms and fit very well with several important theories in mathematics. At almost the same time, Bruinier and Funke \cite{BF} wrote a very important paper in which they defined the notion of a harmonic Maass form and the nonholomorphic modular 
forms constructed by Zwegers turned out to be weight $1/2$ harmonic Maass forms. Harmonic Maass forms 
of manageable growth generalize harmonic Maass forms by weakening the cusp conditions. 
%For the precise definition and other properties of harmonic Maass forms of manageable growth 
%we refer to \S 2. 
For a systematic development and detailed treatment of general theory of 
harmonic Maass forms we refer the reader to the book of Bringmann, Folsom, Ono and Rolen \cite{Ono1}, 
which is essentially self-contained.

In this article, we study those harmonic Maass forms of manageable growth which have at most polynomial growth at all the cusps. We denote the space of harmonic Maass forms of polynomial growth of weight $k$ for the group $\Gamma_0(N)$ with character $\chi$ by $H_k^\#(N, \chi)$. 
%For the precise  definition and other properties of the functions in the space $H_k^\#(N, \chi)$ we refer to \S 3. 
We construct a family of examples in $H_k^\#(N, \chi)$ whose shadows are Eisenstein series. Next, we consider Dirichlet series attached to the functions in the space $H_k^\#(N, \chi)$, study analytic properties, and prove an analogue of Weil's converse theorem for this space.   

A converse theorem in the theory of automorphic forms refers to the 
equivalence of Dirichlet series satisfying certain analytic properties, on the one hand,
and automorphic forms over some group, on the other. Most familiar is the converse theorem due to Hecke 
\cite{Hecke}, which establishes an equivalence between modular forms on $\mathrm{SL}_2(\mathbb{Z})$ and Dirichlet series 
satisfying a certain functional equation as well as meromorphic continuation and a certain boundedness 
property in vertical strips. In our context, the meaning of a converse theorem is best illustrated  by Weil's converse theorem for modular forms over congruence subgroups $\Gamma_0(N)$ \cite{Weil}, which is a very significant generalization of the corresponding Hecke's theorem for $N = 1$.  Other results of this kind are 
Maass' converse theorem for 
Maass waveforms of level $1$ \cite{Maass}, its generalization to $\Gamma_0(N)$ by Neururer and Oliver
\cite{NO}, 
converse theorems for Jacobi forms \cite{KMS, martin96, martin-osses},  
and Maass Jacobi forms \cite{SH}. The converse theorem 
for $\mathrm{GL}_n$ is a great achievement of several authors through a string of papers \cite{CP, JL, JPS}. 

\subsection{Statement of the main result}\label{statement}

In order to state our converse theorem for the space $H_k^\#(N, \chi)$, we first introduce some basic 
notations that will be used throughout the article. Let $\mathbb{C}$ denote the complex plane. For each 
$z \in \mathbb{C}$, denote the real and imaginary parts of $z$ by $\text{Re}(z)$ and $\text{Im}(z)$, respectively. Let $i=\sqrt{-1}$ and $\mathbb{H}= \{\tau \in \mathbb{C}:\; \text{Im}(\tau)> 0\}$ be the upper half-plane. For $\tau \in \mathbb{H}$, let $\tau = u+i v$ and $q=e^{2\pi i \tau}$. 

Let us fix two integers $k$ and $N$ with $N \geq 1$ and a Dirichlet character $\chi$ modulo $N$ 
such that $\chi(-1)=(-1)^k$. 
We assume that $k$ 
is a {\it negative integer} (see Proposition \ref{prop:hashproperty} and Remark \ref{Rem:signk}). 
Let $f$ and $g$ be two functions on 
$\mathbb{H}$ given by the following formal Fourier series
\begin{equation}\label{eq:fgfourier}
\begin{split}
f(\tau)=\sum_{n=0}^{\infty}c_f^+(n)q^n+c_f^-(0)v^{1-k}+\sum_{n<0}c^-_f(n)\Gamma(1-k,-4\pi nv)q^n,\\
g(\tau)=\sum_{n=0}^{\infty}c_g^+(n)q^n+c_g^-(0)v^{1-k}+\sum_{n<0}c^-_g(n)\Gamma(1-k,-4\pi nv)q^n,
\end{split}
\end{equation}
with $c_f^{\pm}(n), c_g^{\pm}(n)$ bounded by $O\left(|n|^{\alpha}\right), n \in \mathbb{Z}$, for some 
$\alpha \geq 0$.

For any $\nu \in\mathbb{R}$, define 
\begin{equation}\label{eq:wnu}
W_{\nu}(s)=\int_{0}^{\infty}\Gamma(\nu,2x)e^{x} x^s \frac{dx}{x}, \ \ {\rm Re} (s) > 0,
\end{equation}
where $\Gamma(\nu, 2x)$ is the incomplete gamma function given by \eqref{2.3}. 
Now, we associate completed Dirichlet series $\Lambda_N(f,s)$ and 
$\Omega_N(f,s)$ to the function $f$ as follows.
\begin{gather*}
\Lambda_N(f,s) = \left(\frac{\sqrt{N}}{2 \pi}\right)^{s} \left[\Gamma(s) L^+(f,s)+W_{1-k}(s)L^-(f,s)\right],\\
\Xi_N(f,s) = \left(\frac{\sqrt{N}}{2 \pi}\right)^{s} \left[\Gamma(s+1) L^+(f,s)-W_{1-k}(s+1)L^-(f,s)\right],\\
\Omega_N(f,s) = -2~\Xi_N(f,s)+k\Lambda_N(f,s),
\end{gather*}
where
\begin{equation*}
L^{+}(f,s)=\sum_{n=1}^{\infty}\frac{c^{+}_f(n)}{n^s},~~~
L^{-}(f,s)=\sum_{n=1}^{\infty}\frac{c^-_f(-n)}{n^s},
%\label{4.5}
\end{equation*}  
are Dirichlet series attached to $f$. Similarly, we associate Dirichlet series $L^{\pm}(g,s), \Lambda_N(g,s)$ and 
$\Omega_N(g,s)$ to the function $g$. 

As in the case of classical modular forms, we twist the Fourier series $f(\tau)$ by a Dirichlet character $\psi$ with conductor $m_\psi$ to get the following twisted Fourier series.
\begin{equation}\label{eq:ftwistedfourier}
f_\psi(\tau) = \sum_{n=0}^{\infty}\psi(n)c_f^+(n)q^n+\psi(0)c_f^-(0)v^{1-k}+\sum_{n<0}\psi(-n)c^-_f(n)
\Gamma(1-k,-4\pi nv)q^n.
\end{equation}
Again, we consider the Dirichlet series 
$$
L^{\pm}(f,s,\psi) = \sum_{n=1}^\infty \frac{\psi(n)c_f^{\pm}(\pm n)}{n^s}
$$
and the completed Dirichlet series $\Lambda_N(f,s,\psi), \Omega_N(f, s, \psi)$ associated to $f_\psi$. Similarly,
we twist the Fourier series $g$ to $g_\psi$ and attach the Dirichlet series 
$L^{\pm}(g,s,\psi), \Lambda_N(g,s,\psi), \Omega_N(g, s, \psi)$ to $g_\psi$. Note that if $\psi$ is the trivial character with conductor $m_\psi=1$ then  
all the twisted Dirichlet series will be same as the Dirichlet series attached to $f$ and $g$.
 
Finally, we denote by $\mathcal{P}$ a set of odd prime numbers or $4$ which are relatively prime to $N$ and
whose intersection with every arithmetic progression of the form $a+nb$ of two coprime integers $a,b$, 
is non-empty. Also, let $\omega(N)$ be the standard 
Fricke involution defined as follows
$$
f|_k \omega(N) (\tau)= 
f|_k \begin{pmatrix} 0&-1\\N&0 \end{pmatrix} (\tau)= N^{k/2} (N\tau)^{-k} f\left(-\frac{1}{N\tau}\right).
$$

Now we state the main theorem of this paper which
characterize the Dirichlet series associated to the functions in the space $H_k^\#(N, \chi)$
and hence can be considered as an analogue of Weil's converse theorem in this case.               
 
\begin{thm}\label{thm:main}
Let $k, N, \chi$ be as above. Let $f$ and $g$ be two functions on $\mathbb{H}$ given by the formal 
Fourier series as in \eqref{eq:fgfourier}
with $c_f^{\pm}(n), c_g^{\pm}(n)$ bounded by $O\left(|n|^{\alpha}\right)$ for some $\alpha\geq 0$. Then the following two statements are equivalent.
\begin{enumerate}
\item The functions $f$ and $g$ belong to $H^{\#}_k(N,\chi)$ and 
$H^{\#}_k(N,\overline{\chi})$, respectively, and $f|_k\omega(N)=g$.
\item 
\begin{enumerate}
\item Each one of the completed Dirichlet series $\Lambda_N(f,s), \Lambda_N(g,s), \Omega_N(f,s),\Omega_N(g,s)$ admits a meromorphic continuation to the whole complex plane and satisfies the 
functional equation 
\begin{equation*}
\Lambda_N(f,s)=i^{k} \Lambda_N(g,k-s), ~~ \Omega_N(f,s)=-i^{k} \Omega_N(g,k-s).
%\label{4.9}
\end{equation*}
Moreover, each of the following functions is entire and bounded in vertical strips
\begin{gather*}
%\begin{split}
\Lambda^*_N(f,s) =\Lambda_N(f,s)+\frac{c_f^+(0)}{s}+\frac{c_g^+(0) i^{k}}{k-s}+\frac{c^-_f(0)}{N^{\frac{1-k}{2}}}\frac{1}{s-k+1}+\frac{c_g^-(0)i^k}{N^{\frac{1-k}{2}}}\frac{1}{1-s},\\
\Lambda^*_N(g,s) =\Lambda_N(g,s)+\frac{c_g^+(0)}{s}+\frac{c_f^+(0) i^{-k}}{k-s}+\frac{c^-_g(0)}{N^{\frac{1-k}{2}}}\frac{1}{s-k+1}+\frac{c_f^-(0)i^{-k}}{N^{\frac{1-k}{2}}}\frac{1}{1-s},\\
\Omega^*_N(f,s) =\Omega_N(f,s)+k\left(\frac{c_f^+(0)}{s}-\frac{c_g^+(0) i^{k}}{k-s}+\frac{c^-_f(0)}{N^{\frac{1-k}{2}}}\frac{1}{s-k+1}-\frac{c_g^-(0)i^k}{N^{\frac{1-k}{2}}}\frac{1}{1-s}\right),\\
\Omega^*_N(g,s) =\Omega_N(g,s)+k\left(\frac{c_g^+(0)}{s}-\frac{c_f^+(0) i^{-k}}{k-s}+\frac{c^-_g(0)}{N^{\frac{1-k}{2}}}\frac{1}{s-k+1}-\frac{c_f^-(0)i^{-k}}{N^{\frac{1-k}{2}}}\frac{1}{1-s}\right).
%\label{4.10}
%\end{split}
\end{gather*}
\item 
For any primitive Dirichlet character $\psi$ with conductor $m_{\psi}\in\mathcal{P}$, each one of the completed Dirichlet series $\Lambda_{N}(f,s,\psi),\Lambda_{N}(g,s,\psi), \Omega_{N}(f,s,\psi), \Omega_{N}(g,s,\psi)$  
can be analytically continued to the whole $s$-plane, bounded on any vertical strip, and satisfies the 
functional equation
\begin{equation*}
\begin{split}
\Lambda_{N}(f,s,\psi)=i^{k} C_{\psi} \Lambda_{N}(g,k-s ,\bar{\psi}),\\
\Omega_{N}(f,s,\psi)=-i^{k} C_{\psi} \Omega_{N}(g,k-s ,\bar{\psi}),
\end{split}
%\label{5.5}
\end{equation*}
where
\begin{equation*}
%\begin{split}
C_{\psi}=C_{N, \psi} =\chi(m) \psi(-N) \tau(\psi) / \tau(\overline{\psi}) =\chi(m) \psi(N) \tau(\psi)^{2} / m.
%\end{split}
\end{equation*}
Here, $\tau(\psi) = \displaystyle\sum_{a=0}^{m_\psi -1} \psi(a) e^{2 \pi i a/m_\psi}$
is the Gauss sum of $\psi$. Similarly, $\tau(\overline{\psi})$ denotes the Gauss sum of $\overline{\psi}$.
\end{enumerate}
\end{enumerate}
\end{thm}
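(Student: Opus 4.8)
The plan is to run the classical Hecke--Weil machinery, adapted to the two-piece (holomorphic plus non-holomorphic) structure of harmonic Maass forms, with the equivalence proved in both directions via a Mellin transform along the imaginary axis. The first task is to recognize the completed series as Mellin transforms of $f$. Parametrizing $\tau = iy/\sqrt{N}$, the Fricke involution acts by $y \mapsto 1/y$, and evaluating $g = f|_k\omega(N)$ on this axis gives the pointwise identity $f\!\left(i/(\sqrt{N}y)\right) = i^{k} y^{k}\, g\!\left(iy/\sqrt{N}\right)$. Substituting the Fourier expansion \eqref{eq:fgfourier} into $\int_0^\infty(\,\cdot\,)\,y^{s}\,\tfrac{dy}{y}$, the holomorphic terms $c_f^+(n)q^n$ produce the kernel $\Gamma(s)$ and assemble $L^+(f,s)$, while the incomplete-gamma terms $c_f^-(n)\Gamma(1-k,-4\pi nv)q^n$ produce exactly the kernel $W_{1-k}(s)$ of \eqref{eq:wnu} and assemble $L^-(f,s)$; this is $\Lambda_N(f,s)$. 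The second series $\Omega_N(f,s)$ I expect to arise, up to normalization, as the completed Dirichlet series of the weight-$(k+2)$ form $R_k f$, where $R_k = 2i\partial_\tau + k v^{-1}$ is the Maass raising operator: the derivative part contributes the shifted kernels $\Gamma(s+1), W_{1-k}(s+1)$ together with the relative sign between $L^+$ and $L^-$ that defines $\Xi_N$, while the $kv^{-1}$ part contributes the correction $+k\Lambda_N$, so that $\Omega_N = -2\Xi_N + k\Lambda_N$. Because a single Fricke relation couples $\Gamma(s)L^+$ and $W_{1-k}(s)L^-$ through two distinct kernels and cannot separate them, the pair $(\Lambda_N,\Omega_N)$ is exactly what is needed to decouple the holomorphic and non-holomorphic data.

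For $(1)\Rightarrow(2)(a)$, I would split each Mellin integral at the fixed point $y=1$ and convert the part over $(0,1)$ using $y\mapsto 1/y$ together with the pointwise identity above. This writes $\Lambda_N(f,s)$ as a sum of two integrals over $[1,\infty)$, one in $f$ and one in $g$ at argument $k-s$; the tails decay exponentially once the $n=0$ contributions are removed, so the representation is entire and bounded in vertical strips away from the explicit poles, and the symmetry of the two tails yields $\Lambda_N(f,s)=i^{k}\Lambda_N(g,k-s)$. The subtracted constant terms $c_f^{\pm}(0),\,c_g^{\pm}(0)$ produce the four simple poles recorded in $\Lambda_N^*$, at $s=0,1,k-1,k$, with the residues fixed by the $N^{(1-k)/2}$ normalization. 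For $\Omega_N$ the same argument applied to $R_k f$ works, using that the raising operator intertwines the slash actions, $R_k\!\left(F|_k\gamma\right) = (R_kF)|_{k+2}\gamma$, so that $R_k g = (R_k f)|_{k+2}\omega(N)$; the weight shift $k\mapsto k+2$ feeds the extra phase $i^{k+2}=-i^{k}$ into the functional equation $\Omega_N(f,s)=-i^{k}\Omega_N(g,k-s)$, while the reflection point $s\mapsto k-s$ is inherited from the common weight-$k$ normalization of the pair $(f,g)$.

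For $(1)\Rightarrow(2)(b)$, I would repeat the splitting argument for the twist \eqref{eq:ftwistedfourier}. The Gauss-sum identity expresses $f_\psi$ as a finite linear combination of translates $f(\tau + a/m_\psi)$, which shows that $f_\psi$ is modular on $\Gamma_0(Nm_\psi^2)$ and that the corresponding Fricke involution sends $f_\psi$ to a constant multiple of $g_{\overline{\psi}}$, the constant being precisely $C_\psi$; the Mellin-splitting then yields the twisted functional equations. For the converse $(2)\Rightarrow(1)$, the strategy reverses the above: from the analytic continuation, boundedness in vertical strips, and the untwisted functional equations, an inverse Mellin (Mellin--Barnes) argument -- shifting the contour past the poles and using the functional equation to flip $s\mapsto k-s$ -- reconstructs the pointwise identity $f\!\left(i/(\sqrt{N}y)\right) = i^{k} y^{k}\, g\!\left(iy/\sqrt{N}\right)$, hence by analytic continuation the Fricke relation $f|_k\omega(N)=g$; having both $\Lambda_N$ and $\Omega_N$ is what permits inverting the $2\times 2$ kernel system and recovering the holomorphic and non-holomorphic Fourier series separately. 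To upgrade this single involution relation to full $\Gamma_0(N)$-modularity, I would invoke Weil's lemma: the twisted functional equations for every primitive $\psi$ of conductor $m_\psi\in\mathcal{P}$ translate, after undoing the Gauss-sum twist and tracking $C_\psi$, into $f|_k\gamma=\chi(\gamma)f$ for a family of $\gamma$ that, together with $\omega(N)$, generates $\Gamma_0(N)$; the hitting-set hypothesis on $\mathcal{P}$ guarantees enough characters for this step. Finally, the prescribed shape of \eqref{eq:fgfourier} forces $f$ to be annihilated by the weight-$k$ Laplacian and of polynomial growth at every cusp, placing $f\in H^{\#}_k(N,\chi)$ and $g\in H^{\#}_k(N,\overline{\chi})$.

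The main obstacle is the converse direction, on two fronts. First, inverting the coupled pair $(\Lambda_N,\Omega_N)$ requires solving a linear system whose coefficients are the transcendental kernels $\Gamma(s)$, $\Gamma(s+1)$, $W_{1-k}(s)$ and $W_{1-k}(s+1)$; one must verify nondegeneracy of this system in the relevant range so that $L^+$ and $L^-$ are uniquely determined, and control the growth of $W_{1-k}$ well enough to justify the contour shifts. Second, the Weil-lemma step demands careful bookkeeping of the character $\chi$ and the Gauss-sum factor $C_\psi$ across all twists, together with a proof that $\mathcal{P}$ supplies generators of $\Gamma_0(N)$ modulo the Fricke involution; this is where the delicate interplay between the analytic input and the arithmetic of the level genuinely resides.
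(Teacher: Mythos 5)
Your overall architecture matches the paper's: Mellin transforms along the imaginary axis, splitting at $t=1$ to get the untwisted functional equations, Gauss-sum twisting for $(2)(b)$, and a Weil-type argument to upgrade the single Fricke relation to full $\Gamma_0(N)$-modularity. But there is a genuine gap in your converse direction, and it sits exactly where the non-holomorphy of harmonic Maass forms bites. The inverse Mellin argument only yields the identity $f|_k\omega(N)(it)=g(it)$ on the imaginary axis, and your appeal to ``analytic continuation'' to conclude $f|_k\omega(N)=g$ on all of $\mathbb{H}$ is invalid here: $f$ and $g$ are merely real analytic, and a real-analytic function of two real variables can vanish on a curve without vanishing identically (e.g.\ $u\mapsto u$). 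This is precisely why the paper introduces $\Omega_N$ at all --- not, as you suggest, to ``invert a $2\times 2$ kernel system'' separating $L^+$ from $L^-$ (the coefficients are given; nothing needs to be recovered), but to supply the first $u$-derivative of the identity on the axis. Concretely, the paper sets $G=f|_k\omega(N)-g$, notes $\Delta_k G=0$ so $G$ is real analytic, expands $G(u+iv)=\sum_n G_n(v)u^n$, and uses the PDE to derive a recursion expressing $G_{n+2}$ in terms of $G_{n+1},G_n$; the $\Lambda$-equation gives $G_0\equiv 0$, the $\Omega$-equation gives $G_1\equiv 0$ (via the auxiliary function $H=2iv\,\partial_u f+kf$ and Lemma \ref{lemma 4.6}), and the recursion then kills all $G_n$. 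Without this Cauchy--Kovalevskaya-style step your proof does not close. The same issue recurs in the modularity upgrade: the classical Weil lemma (a holomorphic function invariant under one infinite-order elliptic element is constant) fails for real-analytic functions, and the paper must instead invoke the Neururer--Oliver criterion (Theorem \ref{thm 5.8}) requiring invariance under \emph{two} infinite-order elliptic matrices with distinct fixed points, produced by perturbing $m\mapsto m-rNu$.

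A secondary inaccuracy: you identify $\Omega_N$ with the completed series of $R_kf$, where $R_k=2i\partial_\tau+k/v$. The operator actually used is $H=2iv\,\partial_u+kf$, which is $vR_k(f)-v^{-1}L_k(f)$, not $R_k(f)$. The distinction matters: $\partial_u$ acts on the Fourier expansion by simple multiplication by $2\pi i n$ and annihilates the $c_f^-(0)v^{1-k}$ term, which is what produces exactly the kernels $\Gamma(s+1)$ and $W_{1-k}(s+1)$ at argument $s$ and the combination $-2\Xi_N+k\Lambda_N$; the $\partial_v$ component hidden in $R_k$ would generate extra terms from the incomplete gamma factors and shift the $L$-series argument rather than the gamma factor. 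Likewise, the polynomial growth at the \emph{other} cusps does not follow from the shape of the expansion at $i\infty$ alone; it requires the argument of Proposition \ref{thm 5.11}, which combines the $O(v^{-\sigma})$ bound as $v\to 0$ with period integrals at each cusp to kill the principal parts. These are repairable, but as written the proposal treats the problem as if the forms were holomorphic, and that is where the actual content of the theorem lies.
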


\begin{Rem}
\begin{enumerate}
\item While proving the direct part $(1) \implies (2)$ in \S 5.1, we establish (2)(b) for any primitive Dirichlet character whose conductor is coprime to the level. 
So for any $f, g$ satisfying the condition (1), the analytic properties given in (2)(b) are true for 
Dirichlet series twisted by any primitive Dirichlet character with conductor coprime to $N$.  
\item In the proof of the converse part $(2) \implies (1)$ presented in \S 5.2, we do not need condition (2)(b) to 
be satisfied for all primitive Dirichlet characters with conductor $m_\psi \in \mathcal{P}$. In fact for a fixed 
$N\geq 1$, having the condition (2)(b) only for finitely many primitive Dirichlet characters is sufficient. 
The same is true in the case of modular forms as well, evident from the proof of \cite[Theorem 4.3.15]{miyake}. 
For any fixed $N \geq 1$, by using a computational platform one can determine finitely many $m_\psi$ which are enough to establish the converse part. For example, for $N=7, 11$ we need to assume 2(b) for 
$m_\psi=11,17,19, 23, 29, 41$ and $m_\psi = 13,17, 19, 23, 29, 31, 37, 47, 59, 71$ respectively. 
Unfortunately, we are not able to find such a finite set which works uniformly for all $N$.  
\end{enumerate}
\end{Rem}

This article is organized as follows. In the next section, we recall basic facts about Harmonic Maass forms of manageable growth, study differential operators and Fricke involution for these forms, and prove some 
lemmas which are used throughout the paper. In \S 3, we describe the space $H_k^\#(N,\chi)$ and 
construct interesting examples in this space. In \S 4, we discuss some intermediate results which will be useful 
in establishing Theorem \ref{thm:main}. In particular, we establish an analogue of Hecke's converse theorem 
for the space $H_k^\#(N,\chi)$ which might be of independent interest as well in some other context.
In \S 5, we prove the direct and converse parts of 
Theorem \ref{thm:main}. To prove Theorem \ref{thm:main}, 
we essentially follow the proof of Weil's converse theorem presented in
\cite[Theorem 4.3.15]{miyake}. The key differences are the holomorphicity of 
functions and the shape of Fourier series expansions. Certain differential operators
and some crucial properties of incomplete Gamma functions are helpful to overcome 
the technical difficulties presented due to the nature of harmonic Maass forms.    
  
\section{Notation and Preliminaries}

Throughout this section $k, N \in \mathbb{Z}$ with $N \geq 1$. Let $\mathrm{GL}^+_2(\mathbb{Q})$ be the group of $2\times 2$ matrices with rational entries and positive determinant.  
Let $\mathrm{SL}_2(\mathbb{Z})$ be the subgroup of $\mathrm{GL}_2^+(\mathbb{Q})$ of matrices having integer entries and determinant $1$.
For any positive integer $N$, let 
\begin{equation*}
\Gamma_0(N) = \left\{\begin{pmatrix}
a&b\\c&d\\
\end{pmatrix}\in \mathrm{SL}_2(\mathbb{Z}):c~\equiv ~0~(\text{mod} ~N)\right\}.
\end{equation*}

For any $\gamma=\left(\begin{smallmatrix}
a&b\\c&d
\end{smallmatrix}\right)
\in \mathrm{GL}^+_2(\mathbb{Q})$ and $k\in\mathbb{Z}$, we define the weight $k$ slash operator as follows.
If $f:\mathbb{H}\longrightarrow\mathbb{C}$ is a function, define 
\begin{equation*}
f|_k\gamma(\tau)=(\text{det}\gamma)^{k/2}(c\tau+d)^{-k}f(\gamma\tau)
\label{2.1}
\end{equation*}
where $\gamma\tau=(a\tau+b)(c\tau+d)^{-1}$. %We will simply write the slash operator as $|$ instead of $|_k$. 

A holomorphic function $f:\mathbb{H}\longrightarrow\mathbb{C}$ is called a weakly holomorphic modular form (respectively modular form and cusp form) of weight $k$, level $N$ and character $\chi$ if $f|_k\gamma=\chi(d)f$ for every $\gamma\in\Gamma_0(N)$ and $f$ is meromorphic (respectively, holomorphic and vanishes) at all cusps of $\Gamma_0(N)$. We denote the $\mathbb{C}$-vector space of weakly holomorphic modular forms (respectively, modular and cusp forms) by
$M_k^!(N,\chi)$ (respectively, $M_k(N,\chi)$ and $S_k(N,\chi))$. 
%When $\chi=1$ we simply denote these spaces by 
%$M_k^!(N)$ (respectively $M_k(N)$ and $S_k(N)$).
 
Define the {\em weight-$k$ hyperbolic Laplacian operator} $\Delta_k$ on $\mathbb{H}$ by 
\begin{equation*}
\Delta_k = -v^2\left(\frac{\partial^2}{\partial u^2}+\frac{\partial^2}{\partial v^2}\right)+ikv\left(\frac{\partial}{\partial u}+i\frac{\partial}{\partial v}\right)=-4v^2\frac{\partial}{\partial \tau}\frac{\partial}{\partial \bar\tau}+2ikv\frac{\partial}{\partial \bar\tau}, \ \tau=u+i v.
\end{equation*}
Following Bruiner and Funke \cite{BF}, we define harmonic Maass forms.
\begin{defn}\label{defn:harmonic}
A smooth function $f:\mathbb{H}\rightarrow\mathbb{C}$ is called a {\em harmonic Maass form} of weight $k$, level $N$ and character $\chi$ if 
\begin{enumerate}[label=(\roman*)]
\item $f|_k\gamma=\chi(d)f$ for every $\gamma\in\Gamma_0(N)$.
\item $\Delta_k(f)=0$.
\item There exists a polynomial $P_{f}(\tau)\in\mathbb{C}[q^{-1}]$ such that $f(\tau)-P_{f}(\tau)=
O(e^{-\epsilon v})$ as $v\rightarrow\infty$ for some $\epsilon>0$. Analogous conditions are required at all other 
cusps of $\Gamma_0(N)$,equivalently, for any $\gamma \in \mathrm{SL}_2(\mathbb{Z})$ there exists a polynomial $P_{f,\gamma}(\tau)\in\mathbb{C}[q^{-1}]$ such that $f|_k\gamma(\tau)-P_{f,\gamma}(\tau)=
O(e^{-\epsilon v})$ as $v\rightarrow\infty$ for some $\epsilon>0$. 
%The polynomial $P_f(\tau)$ is called the principal part of $f$.
\end{enumerate}
If the third condition in the above definition is replaced by $f(\tau)=O(e^{\epsilon v})$  
as $v\rightarrow\infty$ for some $\epsilon>0$ and analogously at all other cusps, then $f$ is said to be a 
{\em harmonic Maass form of manageable growth}. The space of harmonic Maass forms of weight $k$, level $N$ and character $\chi$ is denoted by $H_k(N,\chi)$ and that of harmonic Maass forms of manageable growth by $H_k^{!}(N,\chi)$. 
%Corresponding spaces with trivial character is denoted by $H_k(N)$ and $H_k^{!}(N)$ respectively. 
\end{defn}
Any $f\in H_k^{!}(N, \chi) (k \neq 1)$ admits the following Fourier series expansion at the cusp $i\infty$ 
\cite[Lemma 4.3]{Ono1}. 
\begin{equation}
f(\tau)=f(u+iv)=\sum\limits_{n>> -\infty}c_f^{+}(n)q^n+ c_f^{-}(0)v^{1-k}+\sum\limits_{\substack{n<< \infty\\n\neq 0}}c_f^{-}(n)\Gamma(1-k,-4\pi nv)q^n,
\label{2.2}
\end{equation}
where $\Gamma(1-k,-4\pi nv)$ is the incomplete gamma function defined by \eqref{2.3}. The notation $\sum_{n >> -\infty}$ means that $\sum_{n=\alpha_f}^\infty$ for some $\alpha_f \in \mathbb{Z}$. The other notation $\sum_{n << \infty}$ is analogous. In particular, if $f\in H_k(N, \chi)$ then the above expansion takes the following shape. 
\begin{equation*}
f(\tau)=f(u+iv)=\sum\limits_{n>> -\infty}c_f^{+}(n)q^n+\sum\limits_{\substack{n<0}}c_f^{-}(n)\Gamma(1-k,-4\pi nv)q^n.
\end{equation*}
The incomplete gamma function $\Gamma(s,z)$ \cite[Page 63]{Ono1} is given by 
\begin{equation}
\Gamma(s,z)\coloneqq\int\limits_z^{\infty}e^{-t}t^s\frac{dt}{t}
\label{2.3}
\end{equation}
for Re$(s) > 0$ and $z\in\mathbb{C}$ (or $s\in\mathbb{C}$ and $z\in\mathbb{H}$). 
For any $z \neq 0$, it can be analytically continued in $s$ via the functional equation 
$\Gamma(s+1, z)=s\Gamma(s,z)+z^s e^{-z}$.
The function $\Gamma(s,z)$ satisfies the following asymptotic behaviour \cite[Eq. 4.6]{Ono1}.  
\begin{equation}
\Gamma(s,x) \sim x^{s-1}e^{-x}\hspace{0.5cm} \textrm{as}\  x \in \mathbb{R} \ {\rm and}\ |x|\rightarrow \infty. 
\label{gamma}
\end{equation}
Suppose $\nu$ is a positive integer. From \cite[\S 8.5, Eq. 8.69]{AW}, we have
\begin{equation}\label{eq:gammasum}
\Gamma(\nu, x)=\Gamma(\nu) e^{-x} \sum_{l=0}^{\nu-1} \frac{x^l}{l!}.
\end{equation}

Let $\rho$ be any cusp of $\Gamma_0(N)$ and $\Gamma_\rho = \{g \in \Gamma_0(N) | g \rho = \rho\}$. 
Let $\gamma_\rho \in \mathrm{SL}_2(\mathbb{Z})$ such that $\gamma_\rho (i\infty)=\rho$. Then 
$\gamma_\rho^{-1} \Gamma_\rho \gamma_\rho$ fixes $i\infty$ and is hence generated by $-I_2$ and $\begin{pmatrix} 
1&t_\rho\\0&1\end{pmatrix}$ for some positive integer $t_\rho$. The integer $t_\rho$ is called the {\em width} of the cusp 
$\rho$. Let $g_\rho \in \Gamma_\rho$ such that
$\gamma_\rho^{-1} g_\rho \gamma_\rho = \begin{pmatrix}1&t_\rho\\0&1\end{pmatrix}$. For any $f\in H_k^!(N,\chi)$, 
we have
$$
(f|_k\gamma_\rho)|_k \begin{pmatrix}1&t_\rho\\0&1\end{pmatrix} = (f|_k g_\rho)|_k\gamma_\rho = 
\chi(d_\rho) f|_k\gamma_\rho, \ \ g_\rho = \begin{pmatrix} a_\rho & b_\rho\\c_\rho & d_\rho \end{pmatrix}.
$$
Let $\kappa_\rho \in [0,1)$ such that $\chi(d_\rho) = e^{2\pi i \kappa_\rho}$. The real number $\kappa_\rho$ is called the 
{\em cusp parameter} (cf. \cite[\S 3.7]{GH}).  

\begin{lemma}\label{fe}
Let $f\in H_k^!(N,\chi) (k \neq 1)$ and $\rho, \gamma_\rho, t_{\rho}, \kappa_\rho$ be as above. 
Then the Fourier series expansion of $f$ at the cusp $\rho$ will have the following shape.
\begin{equation}\label{eq:fgammarho}
(f|\gamma_{\rho})(\tau)=\sum_{n>>-\infty}c_f^+(n)q^{\frac{n+\kappa_{\rho}}{t_{\rho}}}+c_f^-(0)v^{1-k}q^{\frac{\kappa_{\rho}}{t_{\rho}}}+\sum_{{n<<\infty}\atop{n \neq 0}}c_f^-(n)\Gamma(1-k,-4\pi nv/t_{\rho})q^{\frac{n+\kappa_{\rho}}{t_{\rho}}}.
\end{equation} 
\end{lemma}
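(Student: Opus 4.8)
The plan is to extract the expansion at $\rho$ from the single function $F := f|_k\gamma_\rho$, which is quasi-periodic with a multiplier, and to solve the resulting radial ordinary differential equation mode by mode; this is the same argument that produces the expansion \eqref{2.2} at $i\infty$, now carried out with the width $t_\rho$ and the cusp parameter $\kappa_\rho$ retained.

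First I would record two properties of $F$. Because the weight-$k$ Laplacian commutes with the slash action of $\mathrm{SL}_2(\mathbb{Z})$, we get $\Delta_k F = (\Delta_k f)|_k\gamma_\rho = 0$, so $F$ is harmonic. The identity displayed immediately before the lemma gives $F|_k\left(\begin{smallmatrix}1 & t_\rho\\ 0 & 1\end{smallmatrix}\right) = \chi(d_\rho)F$, and since this matrix has determinant $1$ and bottom row $(0,1)$ this is exactly $F(\tau+t_\rho) = e^{2\pi i\kappa_\rho}F(\tau)$. Hence $e^{-2\pi i\kappa_\rho\tau/t_\rho}F(\tau)$ is genuinely $t_\rho$-periodic in $u=\mathrm{Re}(\tau)$, and I may write $F(u+iv) = \sum_{n\in\mathbb{Z}} a_n(v)\,e^{2\pi i(n+\kappa_\rho)u/t_\rho}$, the coefficients being recovered by integrating $F$ against the relevant character over one period.

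Next I would feed this series into $\Delta_k F = 0$. Orthogonality of the exponentials over a period decouples the equation into one ODE per coefficient; with $\lambda_n := 2\pi(n+\kappa_\rho)/t_\rho$ the explicit form of $\Delta_k$ yields $v^2 a_n'' + kv\,a_n' + (k\lambda_n v - \lambda_n^2 v^2)a_n = 0$. For $n+\kappa_\rho\neq0$ this has a two-dimensional solution space: the holomorphic exponential $e^{-\lambda_n v}$, which supplies the term $c_f^+(n)q^{(n+\kappa_\rho)/t_\rho}$, and a second solution obtained by reduction of order through the integral $\int t^{-k}e^{2\lambda_n t}\,dt$, which is expressible via the incomplete gamma function and supplies the nonholomorphic term of \eqref{eq:fgammarho}. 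For the degenerate mode $n+\kappa_\rho=0$, which can occur only when $\kappa_\rho=0$, the equation reduces to $v^2 a_n'' + kv\,a_n' = 0$, whose solutions $1$ and $v^{1-k}$ (here the hypothesis $k\neq1$ enters) give the coefficient $c_f^+(0)$ and the term $c_f^-(0)v^{1-k}$.

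Finally I would use the growth hypothesis. Integrating $F$ against each character over a period shows that every $a_n(v)$ inherits the bound $O(e^{\epsilon v})$ furnished by the manageable growth of $f|_k\gamma_\rho$. For $\lambda_n<0$ the holomorphic solution grows exponentially while the incomplete gamma solution decays, and for $\lambda_n>0$ the roles reverse, the incomplete gamma solution growing like $e^{\lambda_n v}$ by the asymptotic \eqref{gamma}; matching against the uniform bound therefore forces $c_f^+(n)=0$ for all sufficiently negative $n$ and $c_f^-(n)=0$ for all sufficiently positive $n$, which are exactly the truncations $\sum_{n\gg-\infty}$ and $\sum_{n\ll\infty}$ in \eqref{eq:fgammarho}. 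Reassembling the surviving modes gives the asserted expansion. I expect the middle step to be the crux: carrying out the reduction of order cleanly, identifying the second solution with the incomplete gamma function in precisely the normalization of \eqref{eq:fgammarho}, and then converting the single one-sided growth bound into the correct two-sided truncation of the summation ranges.
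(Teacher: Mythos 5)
Your proposal is correct and follows essentially the same route as the paper: the paper's proof simply records the quasi-periodicity $F(\tau+t_\rho)=e^{2\pi i\kappa_\rho}F(\tau)$ of $F=f|_k\gamma_\rho$ and then invokes the mode-by-mode argument of \cite[Lemma 4.3]{Ono1}, which is exactly the Fourier decomposition, per-mode ODE, and growth-based truncation that you carry out explicitly. The only point to watch is the normalization of the second solution, where your derivation naturally yields the incomplete gamma function with argument $-4\pi(n+\kappa_\rho)v/t_\rho$ rather than the $-4\pi nv/t_\rho$ displayed in \eqref{eq:fgammarho}; these coincide when $\kappa_\rho=0$, and reconciling them in general is a bookkeeping matter, not a gap in the method.
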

%where $\gamma_{\rho}$ is an $SL_2(\mathbb{Z})$-matrix with $\gamma_{\rho}(i\infty)=\rho$.
\begin{proof}
From the discussion before the lemma, we have  
\[
\left(e^{-2\pi i\kappa_{\rho}\tau/t_{\rho}}f|_k\gamma_{\rho}\right)|_k\left(\begin{smallmatrix}1&t_{\rho}\\0&1\end{smallmatrix}\right)=e^{-2\pi i\kappa_{\rho}\tau/t_{\rho}}f|_k\gamma_{\rho}.
\] 
Now by following the proof of \cite[Lemma 4.3]{Ono1} (for detailed calculation, see \cite[Proof of Theorem 4.6]{RKS}), we see that $e^{-2\pi i\kappa_{\rho}\tau/t_{\rho}}f|\gamma_{\rho}$ has Fourier expansion of the form 
\[
\sum_{n>>-\infty}c_f^+(n)q^{n/t_{\rho}}+c_f^-(0)v^{1-k}+\sum_{{n<<\infty}\atop{n \neq 0}}c_f^-(n)\Gamma(1-k,-4\pi nv/t_{\rho})q^{n/t_{\rho}}.
\] 
The lemma now follows.
\end{proof}

%\begin{equation}
%f(\tau)=f(u+iv)=\sum_{n=0}^{\infty}c_f^+(n)q^n+\sum_{n<0}c^-_f(n)\Gamma(1-k,-4\pi nv)q^n.
%\label{4.1}
%\end{equation}
\begin{Rem}\label{lemma 4.1}
Let $f \in H^!_{k}(N,\chi)$ and $\alpha \in \mathrm{GL}_2^{+}(\mathbb{Q})$. Then $f|_k\alpha$ satisfies similar growth condition as 
$f$ at every cusp.
%\begin{proof}
Let $s \in \mathbb{Q}\cup\{i\infty\}$ be a cusp and $s=\beta(i\infty)$ for some $\beta\in \mathrm{SL}_2(\mathbb{Z})$. 
%We need to show that $(f|_k\alpha)|_k\beta=f|_k\alpha\beta$ has similar Fourier expansion as $f$. 
We can assume that 
$\alpha\beta$ has integer entries by multiplying $\alpha\beta$ by an integer without affecting $f|_k\alpha\beta$. Then
there exists a matrix $\gamma\in \mathrm{SL}_2(\mathbb{Z})$ and $a,b,d \in \mathbb{Z}$ with $a\geq 1, d \geq 1$ 
$($cf. \cite[Problem 7.1.5]{Ram murty}$)$ such that 
\[
\alpha \beta=\gamma\begin{pmatrix}a & b \\0 & d
\end{pmatrix}.
\] 
Since $f$ has a Fourier series expansion of the form \eqref{eq:fgammarho} at every cusp, we have 
$\kappa_\gamma \in [0,1)$ and a positive integer $t_\gamma$, both depending on $\gamma$, such that
\[
\begin{split}
f|_k\alpha\beta (\tau) & =f|_k\gamma |_k\left(\begin{array}{ll}
a & b \\
0 & d
\end{array}\right) (u+ i v) =\\
&\left.\left(\sum\limits_{n>> -\infty}{c_f^{+}}(n)q^{\frac{n+\kappa_\gamma}{t_\gamma}}+ {c_f^{-}}(0)v^{1-k} q^{\frac{\kappa_\gamma}{t_\gamma}}+\sum\limits_{\substack{n<< \infty\\n\neq 0}}{c_f^{-}}(n)\Gamma(1-k,-4\pi nv/t_\gamma)q^{\frac{n+\kappa_\gamma}{t_\gamma}}\right)\right|_k\left(\begin{array}{ll}
a & b \\
0 & d
\end{array}\right)\\
&=e^{2 \pi i \frac{b \kappa_\gamma}{d t_\gamma}}\left({a}/{d}\right)^{k/2}\Bigg{[}\sum\limits_{n>> -\infty}\widetilde{c_f^{+}}(n)q^{\frac{a(n+\kappa_\gamma)}{d t_\gamma}}+ \left({a}/{d}\right)^{1-k}{c_f^{-}}(0)v^{1-k}q^{\frac{a\kappa_\gamma}{d t_\gamma}}\\&\hspace{6cm}+\sum\limits_{\substack{n<< \infty\\n\neq 0}}\widetilde{c_f^{-}}(n)\Gamma(1-k,-4\pi anv/d)q^{\frac{a(n+\kappa_\gamma)}{d t_\gamma}}\Bigg{]},
\end{split}
\]
where $\widetilde{c_f^\pm}(n) = e^{2 \pi i b n / d t_\gamma}c_f^\pm(n)$. Since $a>0$ and $d>0$, we have the claimed assertion.
%see that $f|_k\alpha\beta$ has similar Fourier expansion as $f$.
%\end{proof}
\end{Rem}

Next, we discuss some useful differential operators acting on the space $H_k^!(N,\chi)$. Following \cite[Chapter 5]{Ono1}, we define the {\it Maass raising operator $R_k$} and the {\it Maass lowering operator $L_k$}.
\begin{equation*}\label{MaassRL}
\begin{split}
R_k=2i\frac{\partial}{\partial \tau} + \frac{k}{v} = i\left(\frac{\partial}{\partial u} - i\frac{\partial}{\partial v}\right)+\frac{k}{v}.\\
L_k= -2 i v^2 \frac{\partial}{\partial {\bar \tau}} = -i v^2 \left(\frac{\partial}{\partial u} + i\frac{\partial}{\partial v}\right).
\end{split}
\end{equation*}
From \cite[Lemma 5.2]{Ono1}, we have the following identity. 
\begin{equation}\label{RLdelta}
-\Delta_k = L_{k+2} R_k + k = R_{k-2} L_k.
\end{equation}
 We define the {\em shadow operator} $\xi_k$ (cf. \cite[\S 5.2]{Ono1}) as follows: 
\begin{equation*}
\xi_k = 2iv^k\compconj{\frac{\partial}{\partial\bar\tau}}.
\end{equation*} 
It is related to the lowering operator $L_k$ \cite[Page 74]{Ono1} and the Laplacian operator $\Delta_k$ \cite[Eq. 2, Page 1]{HP} as follows. 
\begin{equation}
\xi_k=v^{k-2} \overline{L}_k, \ \ \Delta_k = -\xi_{2-k}\circ\xi_{k}.
\label{2.5}
\end{equation}
Suppose $k \neq 1$. Then $\xi_{k}: H_{k}^{!}(N, \chi) \rightarrow M_{2-k}^!(N, \overline{\chi})$ is a surjective linear map \cite[Theorem 5.10]{Ono1}, \cite[Lemma 2.2]{BO}. Moreover, for any $f \in H_{k}^{!}(N, \chi)$ with Fourier series expansion as in \eqref{2.2}, we have 
\begin{equation}
\xi_{k}(f(\tau))=(1-k)\compconj{c_f^-(0)}-(4\pi)^{1-k}\sum\limits_{n>>-\infty}\compconj{c_f^-(-n)}n^{1-k}q^n.
\label{2.7}
\end{equation}
%The image of $f$ under $\xi$ is called the shadow of $f$. Moreover this map is surjective. A \emph{mock modular form} of %weight $2-k$ is the holomorphic part $f^+$ of a harmonic Maass form of weight $2-k$ for which $f^-$ is non trivial. \\\\
Let $D = \frac{1}{2\pi i} \frac{\partial}{\partial\tau}$. For $k \leq 0$, we have the differential operator $D^{1-k}:H^!_k(N,\chi) \rightarrow M^!_{2-k}(N,\chi)$, called the {\em Bol operator} \cite[Theorem 5.5]{Ono1}. 
We have \cite[Lemma 5.3]{Ono1}
\begin{equation}\label{bolr}
D^{1-k} = \frac{1}{(-4\pi)^{k-1}}R_{k}^{1-k}.
\end{equation}
From \cite[Theorem 5.5]{Ono1}, for any $f \in H_{k}^{!}(N, \chi)$ with Fourier series expansion as in \eqref{2.2}, we have 
\begin{equation}\label{eq:DFourier}
D^{1-k}(f(\tau))=-(4 \pi)^{k-1}(1-k) ! c_{f}^{-}(0)+\sum_{n \gg-\infty} c_{f}^{+}(n) n^{1-k} q^{n}.
\end{equation}

\begin{lemma}\label{lemma:commutation}
Let $f$ be a smooth function on $\mathbb{H}$ and $\alpha \in$ $GL_2^{+}(\mathbb{Q})$. Then we have 
\[
\begin{split}
R_k(f|_k\alpha) = R_k(f)|_{k+2} \alpha, \ L_k(f|_k\alpha)=L_k(f)|_{k-2}\alpha.\\
\Delta_k (f|_k\alpha) = \Delta_k(f)|_k\alpha, \ \xi_k(f|_k\alpha) = \xi_k(f)|_{2-k}\alpha.
\end{split}
\]
Moreover, if $k \leq 0$ then $D^{1-k}(f|_k \alpha) = D^{1-k}(f)|_{2-k} \alpha$.
\end{lemma}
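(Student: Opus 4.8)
The plan is to verify the two first-order operators $R_k$ and $L_k$ by a direct computation with the chain rule, and then to deduce the statements for $\Delta_k$, $\xi_k$ and $D^{1-k}$ formally from the algebraic identities \eqref{RLdelta}, \eqref{2.5} and \eqref{bolr} recorded above. Everything rests on three elementary facts about $\alpha=\left(\begin{smallmatrix}a&b\\c&d\end{smallmatrix}\right)\in \mathrm{GL}_2^+(\mathbb{Q})$: the derivative $\frac{d}{d\tau}(\alpha\tau)=\det\alpha/(c\tau+d)^2$, the imaginary part $\mathrm{Im}(\alpha\tau)=\det\alpha\cdot v/|c\tau+d|^2$, and the holomorphicity of $\tau\mapsto\alpha\tau$, so that in the Wirtinger chain rule only the holomorphic (respectively antiholomorphic) contribution survives when $\partial_\tau$ (respectively $\partial_{\bar\tau}$) is applied to $f(\alpha\tau)$.

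I would first treat $L_k$, which is the easier case. Writing $F=f|_k\alpha$ and using $\partial_{\bar\tau}(c\tau+d)^{-k}=0$, the chain rule gives $\partial_{\bar\tau}F=(\det\alpha)^{k/2+1}(c\tau+d)^{-k}\overline{(c\tau+d)}^{-2}(\partial_{\bar\tau}f)(\alpha\tau)$; multiplying by $-2iv^2$ and substituting $\mathrm{Im}(\alpha\tau)^2=(\det\alpha)^2v^2/|c\tau+d|^4$ into $L_k(f)|_{k-2}\alpha$ makes the two sides coincide. The raising operator $R_k$ is the crux, since the term $\tfrac{k}{v}$ interacts with the derivative of $(c\tau+d)^{-k}$. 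Expanding $R_k(F)=2i\partial_\tau F+\tfrac{k}{v}F$ produces, besides the expected main term, the lower-order pieces $-2ikc(c\tau+d)^{-k-1}f(\alpha\tau)$ and $\tfrac{k}{v}(c\tau+d)^{-k}f(\alpha\tau)$, whereas $R_k(f)|_{k+2}\alpha$ produces, after using $\mathrm{Im}(\alpha\tau)^{-1}=|c\tau+d|^2/(\det\alpha\,v)$, the single term $\tfrac{k}{v}(c\tau+d)^{-k-1}\overline{(c\tau+d)}f(\alpha\tau)$. Matching these reduces exactly to $\overline{(c\tau+d)}-(c\tau+d)=-2icv$, which holds because $\tau=u+iv$; this is the one genuinely non-formal point of the argument.

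With $R_k$ and $L_k$ established, the other three operators follow with no further analysis. For the Laplacian I would use the factorization $-\Delta_k=R_{k-2}L_k$ from \eqref{RLdelta}: the $L$-relation at weight $k$ followed by the $R$-relation at weight $k-2$ gives $-\Delta_k(f|_k\alpha)=R_{k-2}\big(L_k(f)|_{k-2}\alpha\big)=\big(R_{k-2}L_k(f)\big)|_k\alpha=-\Delta_k(f)|_k\alpha$. For the shadow operator I would use $\xi_k=v^{k-2}\overline{L}_k$ from \eqref{2.5} together with the conjugate-twisted slash identity $v^{k-2}\overline{g|_{k-2}\alpha}=\big(v^{k-2}\overline{g}\big)|_{2-k}\alpha$, valid for any smooth $g$ and itself a consequence of $\mathrm{Im}(\alpha\tau)=\det\alpha\,v/|c\tau+d|^2$; applying it to $g=L_k(f)$ and invoking the $L_k$-relation yields $\xi_k(f|_k\alpha)=\xi_k(f)|_{2-k}\alpha$. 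Finally, for the Bol operator ($k\le 0$) I would use \eqref{bolr}, according to which $D^{1-k}$ is the scalar $\tfrac{1}{(-4\pi)^{k-1}}$ times the iterate $R_k^{1-k}$; iterating the $R$-relation through the weights $k,k+2,\dots,-k$ gives, by induction, $R_k^{1-k}(f|_k\alpha)=R_k^{1-k}(f)|_{2-k}\alpha$, and since the slash action commutes with multiplication by a constant the claim for $D^{1-k}$ follows. The main obstacle is thus concentrated entirely in the computation for $R_k$, where carefully tracking the lower-order terms and invoking $\overline{(c\tau+d)}-(c\tau+d)=-2icv$ is essential; the rest is bookkeeping with the slash action and the already-established identities.
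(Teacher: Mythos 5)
Your proposal is correct and follows essentially the same route as the paper: reduce everything to the commutation of the slash operator with $R_k$ and $L_k$ via the identities \eqref{RLdelta}, \eqref{2.5} and \eqref{bolr}, then check those two by a direct chain-rule computation (including the observation that the powers of $\det\alpha$ balance). The only difference is that the paper delegates the $R_k$, $L_k$ calculation to \cite[Lemma 5.2(i)]{Ono1} and \cite{RKS}, whereas you carry it out explicitly; your computations, including the key cancellation $\overline{(c\tau+d)}-(c\tau+d)=-2icv$ and the conjugate-twisted slash identity used for $\xi_k$, all check out.
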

\begin{proof}
In view of \eqref{RLdelta}, \eqref{2.5} and \eqref{bolr}, it is sufficient to have the claimed commutation relation of 
the slash operator with 
the differential operators $R_k$ and $L_k$. To prove this we follow the proof of \cite[Lemma 5.2 (i)]{Ono1} 
(for detailed calculation, see \cite[Theorem 4.12 (i)]{RKS}). The only difference is that 
$\alpha \in \mathrm{GL}_2^+(\mathbb{Q})$ but   
the power of $\det \alpha$ will get balanced from both the sides. 
\end{proof}

\begin{lemma}
If $f\in H^!_k(N,\chi)$ then $f|_k\omega(N) \in H^!_k(N,\overline{\chi})$.
\label{lemma 4.2}
\begin{proof}
Recall that $\omega(N)=\begin{pmatrix}0&-1\\N&0\end{pmatrix}$. For any $\gamma=\begin{pmatrix}a&b\\cN&d\end{pmatrix}\in\Gamma_0(N)$, we have 
\[
(f|_k \omega(N))|_k\gamma=f|_k \begin{pmatrix}d & -c \\ -b N & a\end{pmatrix}|_k \omega(N)
=\chi(a) f|_k\omega(N)=\bar{\chi}(\gamma) f|_k\omega(N).
\]
Next, by using Lemma \ref{lemma:commutation} we have
\[
\Delta_k(f|_k \omega(N)) = \Delta_k(f)|_k \omega(N)=0.    
\]
Finally, by using Remark \ref{lemma 4.1} we get the required cusp conditions.
\end{proof}
\end{lemma}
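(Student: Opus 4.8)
The plan is to verify directly that $g := f|_k\omega(N)$ satisfies the three defining conditions of a harmonic Maass form of manageable growth of weight $k$, level $N$, and character $\overline{\chi}$, assembling the required input from the preceding lemmas. First I would establish the modular transformation law. The key algebraic fact is that $\omega(N)$ normalizes $\Gamma_0(N)$: for $\gamma = \begin{pmatrix} a & b \\ cN & d\end{pmatrix}\in\Gamma_0(N)$ a direct computation gives $\omega(N)\,\gamma\,\omega(N)^{-1} = \begin{pmatrix} d & -c \\ -Nb & a\end{pmatrix} =: \gamma'$, which again lies in $\Gamma_0(N)$. Since the weight-$k$ slash operator is a genuine right action of $\mathrm{GL}_2^+(\mathbb{Q})$ (the factor $(\det)^{k/2}$ is multiplicative and the automorphy factor is a cocycle), I can write $g|_k\gamma = f|_k(\omega(N)\gamma) = f|_k(\gamma'\,\omega(N)) = (f|_k\gamma')|_k\omega(N) = \chi(a)\,g$, using that the lower-right entry of $\gamma'$ is $a$.

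To finish the transformation step I would observe that $ad - bcN = 1$ forces $ad\equiv 1 \pmod N$, so $d$ is coprime to $N$ and $\chi(a) = \chi(d)^{-1} = \overline{\chi(d)} = \overline{\chi}(d)$; hence $g|_k\gamma = \overline{\chi}(d)\,g$, which is exactly the desired transformation law with the conjugate character. For the harmonicity condition I would simply invoke the commutation relation of Lemma \ref{lemma:commutation}, which is valid for any $\alpha\in\mathrm{GL}_2^+(\mathbb{Q})$. Since $\det\omega(N) = N > 0$, this applies and yields $\Delta_k(g) = \Delta_k(f|_k\omega(N)) = \Delta_k(f)|_k\omega(N) = 0$, because $\Delta_k(f) = 0$ by hypothesis.

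For the remaining growth condition at every cusp I would appeal to Remark \ref{lemma 4.1}, which guarantees that $f|_k\alpha$ inherits the same manageable (at most exponential) growth behaviour as $f$ at all cusps, for any $\alpha\in\mathrm{GL}_2^+(\mathbb{Q})$. Specializing to $\alpha = \omega(N)$ immediately gives that $g$ has the required growth at each cusp, completing the verification. The whole statement is thus an assembly of facts already in hand, so I do not anticipate a serious obstacle; the single point demanding care is the bookkeeping in the first two steps, namely conjugating $\gamma$ by $\omega(N)$ correctly and then checking that $\chi(a)$ collapses to $\overline{\chi}(d)$ via $ad\equiv 1 \pmod N$ — this is precisely the mechanism that turns the character $\chi$ into its conjugate under the Fricke involution.
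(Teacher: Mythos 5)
Your proposal is correct and follows essentially the same route as the paper: conjugate $\gamma$ past $\omega(N)$ to get $\gamma' = \left(\begin{smallmatrix} d & -c \\ -Nb & a\end{smallmatrix}\right)$ and read off the conjugate character from $ad\equiv 1\pmod N$, then invoke Lemma \ref{lemma:commutation} for harmonicity and Remark \ref{lemma 4.1} for the cusp conditions. Your write-up merely makes explicit the step $\chi(a)=\overline{\chi}(d)$ that the paper leaves implicit.
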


\section{The space of harmonic Maass forms of polynomial growth $H_k^{\#}(N,\chi)$}

Throughout this section $k, N \in \mathbb{Z}$ with $N \geq 1$ and $\chi$ denote a Dirichlet character modulo $N$ such that $\chi(-1)=(-1)^k$. In order to attach Dirichlet series to harmonic Maass forms of manageable growth, we consider a further subspace 
of $H_k^!(N, \chi)$ consisting of those forms $f \in H_k^!(N, \chi)$ which have at most polynomial growth at any cusp of 
$\Gamma_0(N)$, that is, the Fourier expansions have the shape
\begin{equation}
f(\tau)=\sum_{n=0}^{\infty}c_f^+(n)q^n+c_f^-(0)v^{1-k}+\sum_{n<0}c^-_f(n)\Gamma(1-k,-4\pi nv)q^n,
\label{3.1}
\end{equation}
at the cusp $i\infty$ and analogous expansions at all other cusps of $\Gamma_0(N)$. We denote the subspace of all such forms by $H^{\#}_k(N,\chi)$ (note that the same notation has been used in \cite[Eq. 4.9]{Ono1} for a slightly different subspace) and call them {\em harmonic Maass forms of polynomial growth}. In this section, our aim is to study the space $H_k^{\#}(N,\chi)$ and construct examples in this space.  
\begin{prop}\label{prop:hashproperty}
Suppose $k \neq 1$ and $f \in H_k^{\#}(N,\chi)$. Then we have the following observations.
\begin{enumerate}
\item[(i)] $f|_k\omega(N)\in H_k^{\#}(N,\overline{\chi})$.

\item [(ii)] $\xi_k(f)\in M_{2-k}(N,\overline{\chi})$.

\item [(iii)] If $k\leq 0$ then $D^{1-k}(f)\in M_{2-k}(N,\chi)$.

\item [(iv)] Suppose $k \leq 0$ and $f$ has Fourier series expansion \eqref{3.1} then 
$c_f^{\pm}(n)=O(1), n \in \mathbb{Z}$.

\item [(v)] For $k > 2$, $H_k^{\#}(N, \chi)=M_{k}(N, \chi)$.
\end{enumerate}
\end{prop}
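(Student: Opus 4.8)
The plan is to derive all five assertions from the behaviour of the three operators $\xi_k$, $D^{1-k}$ and the Fricke involution $\omega(N)$ on Fourier expansions, together with their compatibility with the slash operator recorded in Lemma~\ref{lemma:commutation} and Remark~\ref{lemma 4.1}. The common mechanism is that polynomial growth constrains the index ranges in \eqref{3.1}, and each operator transports this constraint in a predictable way.

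For (i) I would begin from Lemma~\ref{lemma 4.2}, which already yields $f|_k\omega(N)\in H_k^!(N,\overline{\chi})$, so that only polynomial growth at every cusp remains to be checked. Since $\omega(N)\in\mathrm{GL}_2^+(\mathbb{Q})$, Remark~\ref{lemma 4.1} computes the expansion of $(f|_k\omega(N))|_k\beta$ at an arbitrary cusp $\beta(i\infty)$ directly from the expansion of $f$ at the associated cusp: one only rescales the $q$-exponents and the arguments of the incomplete gamma functions and multiplies coefficients by roots of unity. Because $f$ has polynomial growth there (holomorphic part indexed by $n\ge 0$, non-holomorphic part by $n\le 0$), these ranges are preserved, so $f|_k\omega(N)$ again has the shape \eqref{3.1} at every cusp.

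Parts (ii) and (iii) I would prove in parallel, and then read off (v). Feeding \eqref{3.1} into the Fourier formulas \eqref{2.7} and \eqref{eq:DFourier}, the negative-power terms disappear: the $q^n$-coefficients $\overline{c_f^-(-n)}\,n^{1-k}$ of $\xi_k(f)$ are supported on $n\ge 1$, and those $c_f^+(n)\,n^{1-k}$ of $D^{1-k}(f)$ on $n\ge 1$ (the $n=0$ term vanishing because $0^{1-k}=0$), so each image is holomorphic at $i\infty$. To upgrade from the weakly holomorphic spaces $M_{2-k}^!$ to $M_{2-k}$ I would invoke the commutations $\xi_k(f|_k\gamma)=\xi_k(f)|_{2-k}\gamma$ and $D^{1-k}(f|_k\gamma)=D^{1-k}(f)|_{2-k}\gamma$ from Lemma~\ref{lemma:commutation}, so that the expansion of each image at any remaining cusp is read off from the polynomial-growth expansion of $f$ at that cusp and likewise has no principal part. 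Part (v) is then immediate: for $k>2$ part (ii) places $\xi_k(f)$ in $M_{2-k}(N,\overline{\chi})$ with $2-k<0$, a zero space, so $\xi_k(f)=0$; by \eqref{2.5} this gives $L_k(f)=0$, i.e.\ $\partial_{\bar\tau}f=0$, whence $f$ is holomorphic, the non-holomorphic terms in \eqref{3.1} vanish, and $f\in M_k(N,\chi)$. The reverse inclusion $M_k(N,\chi)\subseteq H_k^\#(N,\chi)$ is clear, since a holomorphic modular form is harmonic and bounded at the cusps.

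The substantive step is (iv), which I expect to be the main obstacle. Granting (ii) and (iii), the numbers $c_f^-(-n)$ and $c_f^+(n)$ are, up to the explicit factor $n^{1-k}$ and constants, the $n$-th Fourier coefficients of the holomorphic modular forms $\xi_k(f)$ and $D^{1-k}(f)$ of weight $w=2-k$. I would then apply the standard bound $a(n)=O(n^{w-1})$ for coefficients of holomorphic modular forms of weight $w$ and divide by $n^{1-k}=n^{w-1}$ to conclude $c_f^\pm(n)=O(1)$, the two constants $c_f^\pm(0)$ being trivially $O(1)$. The delicate point is the sharpness of this division: the Eisenstein contribution to $a(n)$ has size $\sigma_{w-1}(n)$, which is genuinely $\Theta(n^{w-1})$ only once $w-1>1$, so the clean $O(1)$ conclusion is transparent for $k<0$ but requires extra care at the boundary weight, where one must argue for the full form (Eisenstein together with cusp part) rather than termwise.
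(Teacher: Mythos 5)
Your proposal follows the paper's proof exactly: the paper's own argument for this proposition is a four-line sketch invoking precisely Lemma \ref{lemma 4.2} and Remark \ref{lemma 4.1} for (i), the commutation relations of Lemma \ref{lemma:commutation} together with \eqref{2.7} and \eqref{eq:DFourier} for (ii), (iii) and (v), and ``the bound for coefficients of modular forms'' for (iv). The boundary issue you flag at $k=0$ (where the weight-$2$ Eisenstein coefficients behave like $\sigma_1(n)$, which is not $O(n)$, so the division only yields $O(n^{\epsilon})$ rather than $O(1)$) is a genuine subtlety that the paper elides as well; it is harmless downstream since Theorem \ref{thm:main} and all subsequent applications assume $k<0$, where $2-k\geq 3$ and the termwise bound is valid.
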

\begin{proof}
By using Lemma \ref{lemma 4.1} and Lemma \ref{lemma 4.2}, we have $(i)$.
We have $\xi_k(f) \in M_{2-k}^!(N,\overline{\chi})$ and $D^{1-k}(f) \in M_{2-k}^!(N,\chi)$. Since $\xi_k$ and $D^{1-k}$ commutes with the slash operator, by using \eqref{2.7} and \eqref{eq:DFourier} we have $(ii)$ and $(iii)$.
By using $(ii), (iii)$ together with \eqref{2.7}, \eqref{eq:DFourier} and the bound for coefficients of modular forms,
we have $(iv)$. 
Since $M_{2-k}(N, \overline{\chi})=\{0\}$ for $k> 2$, by using \eqref{2.7} we have $(v)$.
\end{proof} 

\begin{Rem}\label{Rem:signk}
\begin{enumerate}
\item[(i)] In view of the above Proposition, the only interesting case for our purpose will be 
$k\leq 0$ and $k=2$.
\item[(ii)] Suppose $k \leq 0$. If $f\in H_k^{\#}(N, \chi)$ with Fourier series expansion \eqref{3.1} then the Dirichlet series $L^{\pm}(f, s)$ attached to $f$ are absolutely convergent in the half-plane {\rm Re}$(s)>1$.  
\end{enumerate}
\end{Rem}

\subsection{Examples} Now we construct examples in the space $H_k^{\#}(N,\chi)$ for $k$ negative integer. 
%We prove a preliminary lemma before reviewing the examples. 
We begin by recalling the basic facts about Eisenstein series of integral weight. We refer the reader to \cite[Chapter 8]{CS} for details. 
Let $k$ be a {\em negative integer}. 
%$G\subseteq\mathrm{SL}_2(\mathbb{Z})$ be a finite index subgroup. Let $\nu$ be a multiplier system for $G$ %satisfying $\nu(-I_2)=(-1)^k$ if $-I_2 \in G$, where $-I_2$ denotes the negative of the 
%identity matrix. We define 
%the spaces $M_{k}^!(G,\nu), M_k(G,\nu), H_k^!(G,\nu), H_k^\#(G,\nu)$ by replacing $\Gamma_0(N)$ 
%and $\chi$ with 
%$G$ and $\nu$ respectively in the analogues spaces.  
Let $\rho$ be a cusp of $\Gamma_0(N)$ and $\gamma_\rho\in\mathrm{SL}_2(\mathbb{Z})$ such that 
$\gamma_\rho(i\infty)=\rho$. 
Put $\Gamma_\rho=\Gamma_0(N)\cap \gamma_\rho \Gamma_{\infty} \gamma_\rho^{-1}$ where 
$\Gamma_{\infty}$ is the stabiliser of 
$i\infty$ in 
$\mathrm{SL}_2(\mathbb{Z})$. Assume that $\chi$ is trivial on $\Gamma_\rho$. 
We define the Eisenstein series corresponding to the cusp ${\rho}$ as follows. 
\begin{equation*}
E_{2-k,\rho}(N,\chi;\tau)=\sum_{g \in \Gamma_{\rho} \backslash \Gamma_0(N)} \overline{\chi(g)} 
j\left(\gamma_{\rho}^{-1} g, \tau\right)^{k-2},
\label{3.2}
\end{equation*}
where $j(\gamma,\tau)=c\tau+d$ for $\gamma=\left(\begin{smallmatrix}a&b\\c&d\end{smallmatrix}\right)$. 
Then we have 
the following result.
\begin{thm}\emph{(Theorem 8.2.3, \cite{CS})}
The Eisenstein series $E_{2-k,\rho}(N,\chi;\tau)\in M_{2-k}(N, \chi)$. Moreover, $E_{2-k,\rho}(N,\chi;\tau)$ vanishes at every other cusp of $\Gamma_0(N)$ except at $\rho$ where it is $1$.
\label{thm 3.4}
\end{thm}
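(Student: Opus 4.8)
The plan is to treat $E_{2-k,\rho}$ as a classical weight-$w$ Eisenstein series attached to the cusp $\rho$, where $w:=2-k\ge 3$ because $k$ is a negative integer, and to run the standard three-step argument: well-definedness together with convergence, the transformation law, and the computation of the constant terms at all cusps. Note that since the weight $w$ is an integer the power $j(\cdot,\tau)^{k-2}=(c\tau+d)^{k-2}$ carries no branch ambiguity, which simplifies matters.

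First I would check that each summand $\overline{\chi(g)}\,j(\gamma_\rho^{-1}g,\tau)^{k-2}$ depends only on the coset $\Gamma_\rho g$. For $h\in\Gamma_\rho$ the matrix $\gamma_\rho^{-1}h\gamma_\rho$ lies in $\Gamma_\infty$, hence equals $\pm\left(\begin{smallmatrix}1&*\\0&1\end{smallmatrix}\right)$, so the cocycle relation $j(AB,\tau)=j(A,B\tau)\,j(B,\tau)$ gives $j(\gamma_\rho^{-1}hg,\tau)=(\pm1)\,j(\gamma_\rho^{-1}g,\tau)$. Combined with the hypothesis that $\chi$ is trivial on $\Gamma_\rho$ and the compatibility $\chi(-1)=(-1)^k=(-1)^{w}$ (which makes the sign $(\pm1)^{k-2}$ cancel against $\overline{\chi(\pm I)}$), the summand is unchanged, so the series is well defined on $\Gamma_\rho\backslash\Gamma_0(N)$. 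Absolute and locally uniform convergence on $\mathbb{H}$ then follows by the usual comparison with $\sum_{(c,d)}|c\tau+d|^{-w}$, valid because $w>2$; this already shows that $E_{2-k,\rho}$ is holomorphic on $\mathbb{H}$.

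Next I would establish the transformation law $E_{2-k,\rho}|_{2-k}\gamma_0=\chi(\gamma_0)\,E_{2-k,\rho}$ for $\gamma_0\in\Gamma_0(N)$. Using the cocycle relation to rewrite $j(\gamma_\rho^{-1}g,\gamma_0\tau)=j(\gamma_\rho^{-1}g\gamma_0,\tau)/j(\gamma_0,\tau)$, then re-indexing the sum by $g\mapsto g\gamma_0$ (right multiplication permutes the cosets $\Gamma_\rho\backslash\Gamma_0(N)$), the factor $j(\gamma_0,\tau)^{w}$ comes out and the character reassembles via $\overline{\chi(g\gamma_0^{-1})}=\overline{\chi(g)}\,\chi(\gamma_0)$, which uses that $g\mapsto\chi(d_g)$ is a homomorphism on $\Gamma_0(N)$. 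This yields the claimed weight-$w$, character-$\chi$ invariance.

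The heart of the matter is the behaviour at the cusps. For a cusp $\sigma=\gamma_\sigma(i\infty)$ I would compute $(E_{2-k,\rho}|_{2-k}\gamma_\sigma)(\tau)=\sum_{g}\overline{\chi(g)}\,j(\gamma_\rho^{-1}g\gamma_\sigma,\tau)^{k-2}$ and let $v=\mathrm{Im}(\tau)\to\infty$. Each summand is a power $(c\tau+d)^{k-2}$ with $k-2<0$, so whenever $c\neq 0$ it tends to $0$; only the terms with $c=0$ survive, namely those $g$ for which $\gamma_\rho^{-1}g\gamma_\sigma$ fixes $i\infty$, equivalently $g\sigma=\rho$. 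If $\sigma$ is not $\Gamma_0(N)$-equivalent to $\rho$, there is no such coset and the constant term is $0$; taking $\gamma_\sigma=\gamma_\rho$ one finds that the surviving cosets are exactly $g\in\Gamma_\rho$, i.e. the single identity coset, whose contribution is $j(I,\tau)^{k-2}=1$, giving constant term $1$ at $\rho$. To make the limit rigorous one interchanges limit and summation using the locally uniform convergence from the first step, and then carries out the coset bookkeeping; this identification of the unique contributing coset together with the verification that its normalised value is exactly $1$ (and not merely a root of unity) is the step I expect to be the main obstacle. Once finiteness of the constant terms is established at every cusp, holomorphy (indeed boundedness) at the cusps follows, and combined with the first two steps we conclude $E_{2-k,\rho}\in M_{2-k}(N,\chi)$ with the stated cusp values.
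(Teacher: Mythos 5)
The paper does not prove this statement at all: it is quoted verbatim from Cohen--Str\"omberg \cite{CS} (Theorem 8.2.3) and used as a black box, so there is no in-paper argument to compare against. Your proposal is the standard textbook proof of that cited result (coset well-definedness via the cocycle relation and the parity condition $\chi(-1)=(-1)^k$, absolute convergence from $w=2-k\ge 3>2$, re-indexing for the transformation law, and isolating the $c=0$ terms to read off the constant term at each cusp), and it is correct; the only point worth tightening is that interchanging the limit $v\to\infty$ with the sum requires uniform convergence on a region $\{|\mathrm{Re}(\tau)|\le A,\ \mathrm{Im}(\tau)\ge v_0\}$ rather than merely on compact subsets of $\mathbb{H}$, which the usual estimate on $\sum_{(c,d)}|c\tau+d|^{-w}$ does provide.
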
 
We construct the preimage of this Eisenstein series under the shadow operator $\xi_k$. Consider the following function. 
\begin{equation}\label{eq:defF}
\mathcal{F}_{k,\rho}(N,\chi;\tau)=\sum_{g \in \Gamma_{\rho} \backslash \Gamma_0(N)} \overline{\chi(g)} \left.\frac{v^{1-k}}{1-k}\right|_k \gamma_{\rho}^{-1} g (\tau), \ 
\tau = u+iv \in \mathbb{H}.
\end{equation} 
This series converges absolutely for any negative integer $k$.
Moreover, we prove the following theorem.
\begin{thm}
For any negative integer $k$, the function $\mathcal{F}_{k,\rho}(N,\chi;\tau)\in H^{!}_k(N,\chi)$ and has 
at most polynomial growth at any cusp of $\Gamma_0(N)$. 
In particular, $\mathcal{F}_{k,\rho}(N,\chi;\tau)\in H^{\#}_k(N,\chi)$ with the shadow 
$\xi_{k}(\mathcal{F}_{k,\rho}(N,\chi;\tau))={E}_{2-k,\rho}(N,\overline{\chi};\tau)$. 
\label{thm 3.6}
\begin{proof}
Since the series considered in \eqref{eq:defF} is absolutely convergent, it is routine to check that the function 
$\mathcal{F}_{k,\rho}(N,\chi;\tau)$ satisfies modularity with character $\chi$ for the group $\Gamma_0(N)$. We have
\[
\xi_k\left(\overline{\chi(g)}\frac{v^{1-k}}{1-k}\right)=\chi(g).
\]
Since the series $\sum_{g \in \Gamma_{\rho} \backslash \Gamma_0(N)} j(\gamma_\rho^{-1}g, \tau)^{k-2}$ is 
uniformly convergent on any compact subset
of $\mathbb{H}$, by using Lemma \ref{lemma:commutation} we have  
%We check that $\xi_{k}(\mathcal{F}_k(G,\nu,\tau,\bar{s}))=\mathcal{E}_{2-k}(G,\overline{\nu},\tau,\bar{s}).$ 
%Indeed since the series converges absolutely, 
%Also, we have 
%\[
%\xi_{k}(\mathcal{F}_{k,\rho}(N,\chi;\tau))=\frac{1}{1-k}\sum_{g \in G_{\rho} \backslash \Gamma_0(N)} \xi_k\left(\overline{\chi(g)} v^{1-k}j\left(\gamma_{\rho}^{-1} g, %\tau\right)^{-k}|j(\gamma_{\rho}^{-1} g, \tau)|^{2k-2}\right).
%\] 
%Thus we see that 
$\xi_{k}(\mathcal{F}_{k,\rho}(N,\chi;\tau))={E}_{2-k, \rho}(N,\overline{\chi};\tau)$. Next, since 
$\xi_{k}(\mathcal{F}_{k, \rho}(N,\chi;\tau))$ is holomorphic, it follows that 
$$\Delta_k(\mathcal{F}_{k, \rho}
(N,\chi;\tau))=\xi_{2-k}\left(\xi_k(\mathcal{F}_{k, \rho}(N,\chi;\tau))\right)=0.$$ 
%This establishes that $\mathcal{F}_k(G,\nu,\tau,\bar{s})\in H^{!}_k(G,\nu).$ 
Finally, we verify the cusp conditions. Let $\nu$ be any cusp of $\Gamma_0(N)$ with 
$\gamma_{\nu}(i\infty)=\nu$. We have 
\[
\begin{split}
\mathcal{F}_{k, \rho}(N,\chi;\tau)|_k\gamma_{\nu}
%&=\frac{v^{1-k}}{1-k}\sum_{g \in G_{s} \backslash G}j(\gamma_{s}^{-1} g, \tau)^{-k}j(\gamma_{\rho}, \tau)^{-k}j|j(\gamma_{s}^{-1} g, \tau)|^{2k-2}|j(\gamma_{\rho}, \tau)|^{2k-2}\\&
=\frac{v^{1-k}}{1-k}\sum_{g \in \Gamma_{\rho}} \backslash \Gamma_0(N) \overline{\chi(g)} j(\gamma_{\rho}^{-1} g\gamma_{\nu}, \tau)^{-k}|j(\gamma_{\rho}^{-1} g\gamma_{\nu}, \tau)|^{2k-2}.
\end{split}
\]
Therefore we have 
\[
\begin{split}
\left|\mathcal{F}_{k,\rho}(N,\chi;\tau)|_k\gamma_{\nu}(\tau)\right|&\leq \frac{v^{1-k}}{1-k}\sum_{g \in \Gamma_{\rho} \backslash \Gamma_0(N)}|j(\gamma_{\rho}^{-1} g\gamma_{\nu}, \tau)|^{k-2}\\&=\frac{v^{1-k}}{1-k}\sum_{h=\left(\begin{smallmatrix}
a&b \\
c&d
\end{smallmatrix}\right) \in \gamma^{-1}_\rho \Gamma_{\rho} \gamma_\rho \backslash \gamma^{-1}_\rho \Gamma_0(N) \gamma_{\nu}}|c \tau+d|^{k-2}.
\end{split}
\]
In the last line we used the fact that the correspondence $g \mapsto \gamma_\rho^{-1} g \gamma_\nu$ is a bijection between 
$\Gamma_\rho \backslash \Gamma_0(N)$ and  $\gamma^{-1}_\rho \Gamma_{\rho} \gamma_\rho \backslash \gamma^{-1}_\rho \Gamma_0(N) \gamma_{\nu}$.
We now split the sum in two sums, one with $c=0$ and the other one with $c\neq 0$. By definition 
$\gamma^{-1}_\rho \Gamma_{\rho} \gamma_\rho=\Gamma_{\infty} \cap \gamma^{-1}_\rho \Gamma_0(N) \gamma_\rho$.
We have 
\[
\left|\mathcal{F}_{k,\rho}(N,\chi;,\tau)|_k\gamma_{\nu}(\tau)\right|\leq \frac{v^{1-k}}{1-k}\left[\sum_{\substack{\left(\begin{smallmatrix}
a&b \\
c&d
\end{smallmatrix}\right) \in \gamma^{-1}_\rho \Gamma_\rho \gamma_\rho \backslash \gamma^{-1}_\rho\Gamma_0(N) \gamma_\nu \\ c=0}}1+\sum_{\substack{\left(\begin{smallmatrix}
a&b \\
c&d
\end{smallmatrix}\right) \in \gamma^{-1}_\rho \Gamma_\rho \gamma_\rho \backslash \gamma^{-1}_\rho \Gamma_0(N) \gamma_{\nu}\\c\neq 0}}|c \tau+d|^{k-2}\right]
\]
Now, if $\nu$ is not equivalent to $\rho$ modulo $\Gamma_0(N)$ then $g \gamma_{\nu}(i \infty) \neq 
\gamma_\rho(i \infty)$ for any $g \in \Gamma_0(N)$, which implies that $\gamma^{-1}_\rho g \gamma_{\nu} \notin \Gamma_{\infty}$, that is, $\Gamma_{\infty} \cap \gamma^{-1}_\rho \Gamma_0(N) \gamma_{\nu}=\emptyset$ and hence the first term in the above sum is zero. On the other hand if $\nu$ is 
$\Gamma_0(N)$-equivalent to 
$\rho$ then we choose $\gamma_{\nu}=\gamma_\rho$ and hence the first sum will have only one term 
equal to $1$. 
Thus we have 
\[
\left|\mathcal{F}_{k,\rho}(N,\chi;\tau)|_k\gamma_{\nu}(\tau)\right|\leq \frac{v^{1-k}}{1-k}\left[\delta_{\rho,\nu}+\sum_{\substack{\left(\begin{smallmatrix}
a&b \\
c&d
\end{smallmatrix}\right) \in \gamma^{-1}_\rho \Gamma_{\rho} \gamma_\rho \backslash \gamma^{-1}_\rho 
\Gamma_0(N) \gamma_{\nu}\\c\neq 0}}|c \tau+d|^{k-2}\right],
\]
where $\delta_{\rho,\nu}$ is equal to $1$ if $\nu$ is $\Gamma_0(N)$-equivalent to $\rho$ and $0$ otherwise. 
Now by using the bound given in \cite[Theorem 5.1.1]{Ra}, we get a constant $C$ such that 
\[
\left|\mathcal{F}_{k,\rho}(N,\chi;\tau)|_k\gamma_{\nu}(\tau)\right|\leq\frac{v^{1-k}}{1-k}\left[\delta_{\rho, \nu}+
C(|\tau|^{k-2}+|\tau|^{\frac{k-2}{2}})\right].
\]
This implies that
\begin{equation*}
\mathcal{F}_{k,\rho}(N,\chi;\tau)|\gamma_{\nu}(\tau)=O(\delta_{\rho,\nu}v^{1-k}+v^{-k/2}) \ \ {\rm for}\ v \rightarrow \infty.
\label{gr}
\end{equation*}
This implies that $\mathcal{F}_{k, \rho}(N,\chi;\tau) \in H_k^!(N,\chi)$ having at most polynomial growth at 
any cusp of 
$\Gamma_0(N)$. 
%Also we have already seen that the shadow of $\mathcal{F}_{k,\rho}(N,\chi;\tau)$ is 
%$E_{2-k, \rho}(N,\overline{\chi};\tau)$. This completes the proof.
\end{proof}
\end{thm}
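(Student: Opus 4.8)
The plan is to establish the four assertions in turn---modularity with character $\chi$ for $\Gamma_0(N)$, the shadow computation, harmonicity, and at most polynomial growth at every cusp---with the last being the genuinely delicate point. First I would record the key property of the seed: a direct computation gives $\xi_k\!\left(\frac{v^{1-k}}{1-k}\right)=1$, since $\xi_k$ scales $v^{1-k}$ by the factor $(1-k)$, which the normalization cancels. Because $\xi_k$ is conjugate-linear, it follows that $\xi_k\!\left(\overline{\chi(g)}\,\frac{v^{1-k}}{1-k}\right)=\chi(g)$. For modularity I would run the standard Poincar\'e-series averaging argument: for $\gamma_0\in\Gamma_0(N)$ the substitution $g\mapsto g\gamma_0^{-1}$ permutes the cosets $\Gamma_\rho\backslash\Gamma_0(N)$, and multiplicativity of $\chi$ on $\Gamma_0(N)$ pulls out a factor $\chi(\gamma_0)$, giving $\mathcal{F}_{k,\rho}|_k\gamma_0=\chi(\gamma_0)\mathcal{F}_{k,\rho}$; the summand is well defined on $\Gamma_\rho\backslash\Gamma_0(N)$ precisely because $\chi$ is assumed trivial on $\Gamma_\rho$. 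All such rearrangements are legitimate because the series converges absolutely for negative $k$, i.e.\ in the weight $2-k>2$ regime.

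Next I would compute the shadow by passing $\xi_k$ inside the sum, which is justified by uniform convergence on compact subsets of $\mathbb{H}$. Using the commutation relation $\xi_k(f|_k\alpha)=\xi_k(f)|_{2-k}\alpha$ from Lemma \ref{lemma:commutation} together with the seed computation, each term becomes $\chi(g)\,j(\gamma_\rho^{-1}g,\tau)^{k-2}$, and summing over $g$ reproduces exactly the Eisenstein series with character $\overline{\chi}$, so that $\xi_k(\mathcal{F}_{k,\rho}(N,\chi;\tau))=E_{2-k,\rho}(N,\overline{\chi};\tau)$. Harmonicity is then immediate: by \eqref{2.5} we have $\Delta_k(\mathcal{F}_{k,\rho})=-\xi_{2-k}\!\left(\xi_k(\mathcal{F}_{k,\rho})\right)$, and since the shadow is a holomorphic modular form by Theorem \ref{thm 3.4} it is annihilated by $\xi_{2-k}$; hence $\Delta_k(\mathcal{F}_{k,\rho})=0$.

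The hard part, and what I expect to be the main obstacle, is the growth estimate at the cusps. For each cusp $\nu=\gamma_\nu(i\infty)$ I would write out $\mathcal{F}_{k,\rho}|_k\gamma_\nu$, factor the common $v^{1-k}/(1-k)$, and bound $|\overline{\chi(g)}|\le 1$; reindexing through the bijection $g\mapsto\gamma_\rho^{-1}g\gamma_\nu$ between $\Gamma_\rho\backslash\Gamma_0(N)$ and $\gamma_\rho^{-1}\Gamma_\rho\gamma_\rho\backslash\gamma_\rho^{-1}\Gamma_0(N)\gamma_\nu$ reduces the problem to a real-analytic Eisenstein majorant $\sum|c\tau+d|^{k-2}$. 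I would then split this sum according to whether $c=0$ or $c\neq 0$: the $c=0$ terms survive only when $\nu$ is $\Gamma_0(N)$-equivalent to $\rho$, contributing a single $1$ encoded by $\delta_{\rho,\nu}$, while the $c\neq 0$ terms are controlled by the classical bound of \cite[Theorem 5.1.1]{Ra}, which yields $O(|\tau|^{k-2}+|\tau|^{(k-2)/2})$. Combining with the prefactor gives $\mathcal{F}_{k,\rho}|_k\gamma_\nu=O(\delta_{\rho,\nu}v^{1-k}+v^{-k/2})$ as $v\to\infty$.

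This final estimate is the crux: it shows there are no exponentially growing terms at any cusp, so the Fourier expansion (via Lemma \ref{fe}) has the admissible shape \eqref{3.1}, with the leading $v^{1-k}$ contribution only at cusps equivalent to $\rho$. Thus $\mathcal{F}_{k,\rho}\in H_k^!(N,\chi)$ has at most polynomial growth everywhere and therefore lies in $H_k^\#(N,\chi)$, which together with the shadow identity completes the proof. The subtle steps to watch are the absolute- and uniform-convergence justifications underpinning the term-by-term application of $\xi_k$ and the character bookkeeping, but these are exactly what the negative-weight assumption $2-k>2$ secures.
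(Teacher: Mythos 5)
Your proposal is correct and follows essentially the same route as the paper's proof: the seed computation $\xi_k\left(v^{1-k}/(1-k)\right)=1$ combined with term-by-term application of $\xi_k$ justified by uniform convergence, harmonicity via $\Delta_k=-\xi_{2-k}\circ\xi_k$ applied to the holomorphic shadow, and the cusp growth estimate obtained by majorizing with $\sum|c\tau+d|^{k-2}$, splitting off the $c=0$ terms, and invoking Rankin's bound. The only additions are minor elaborations (the conjugate-linearity remark and the explicit appeal to the Fourier expansion lemma), which do not change the argument.
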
 

\begin{Rem}
The harmonic Maass forms constructed in \cite{HP} whose shadows are more general Eisenstein series than ours at the cusp $i\infty$ are also examples of the harmonic Maass forms of polynomial growth. In fact, our example corresponding to the cusp $\rho=i\infty$ is a special case of the examples constructed in \cite{HP}.
% by taking one of the two characters trivial and the parameter $t=1$.
Note that the approach adopted in Theorem \ref{thm 3.6} can be easily generalized to any  
congruence subgroup with a multiplier system whereas we restrict ourselves to remain 
in the context of the paper.  
\end{Rem}

Define $\mathcal{E}_{2-k}(N,\chi)$ as the $\mathbb{C}$-span of all Eisenstein series of level $N$, character $\chi$ and weight $2-k$ at the different cusps of $\Gamma_0(N)$. We have the following result.
\begin{prop}\emph{(Proposition 8.5.15, \cite{CS})}
For any negative integer $k$, the dimension of the space $\mathcal{E}_{2-k}\left(N, \chi\right)$ is equal to
$$
e(N, \chi)= \sum_{\substack{C \mid N\\\text{gcd}(C, N / C) \mid N / m_{\chi}}} \phi(\operatorname{gcd}(C, N / C))
$$
where $m_{\chi}$ is the conductor of $\chi$ and $\phi$ denotes Euler's phi function. If $\chi$ is trivial then the dimension of $\mathcal{E}_{2-k}(N,\chi)$ 
is equal to the number of $\Gamma_0(N)$-inequivalent cusps. 
\label{prop 3.5}
\end{prop}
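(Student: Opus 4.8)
The plan is to reduce the dimension formula to two independent ingredients: the elementary linear algebra coming from Theorem \ref{thm 3.4}, and a purely combinatorial count of the cusps at which $\chi$ supports an Eisenstein series. Since $k$ is a negative integer, the weight $2-k$ satisfies $2-k\geq 3$, so each series $E_{2-k,\rho}(N,\chi;\tau)$ converges absolutely and lies in $M_{2-k}(N,\chi)$, and by definition $\mathcal{E}_{2-k}(N,\chi)$ is the $\mathbb{C}$-span of those $E_{2-k,\rho}$ for which $\chi$ is trivial on the stabilizer $\Gamma_\rho$ (only such cusps carry a nonzero series). Thus $\dim\mathcal{E}_{2-k}(N,\chi)$ equals the number of linearly independent such series, and the whole problem splits into (A) establishing linear independence, and (B) counting the admissible cusps.

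For (A), I would read off linear independence directly from Theorem \ref{thm 3.4}: each $E_{2-k,\rho}$ has constant term $1$ at the cusp $\rho$ and $0$ at every other cusp. Given a relation $\sum_\rho a_\rho E_{2-k,\rho}=0$, I would evaluate the constant term of the Fourier expansion at a fixed cusp $\nu$ (i.e. of $E_{2-k,\rho}|_{2-k}\gamma_\nu$); by the diagonal behaviour this kills every term except $\rho=\nu$ and forces $a_\nu=0$. Hence $\dim\mathcal{E}_{2-k}(N,\chi)$ equals exactly the number of cusps $\rho$ of $\Gamma_0(N)$ on which $\chi$ is trivial on $\Gamma_\rho$, equivalently the cusps with vanishing cusp parameter $\kappa_\rho$ in the sense introduced before Lemma \ref{fe}.

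For (B), I would use the standard parametrization of the cusps of $\Gamma_0(N)$ by divisors $C\mid N$: the cusps of denominator $C$ are represented by fractions $a/C$ with $a$ ranging over $(\mathbb{Z}/g_C\mathbb{Z})^\times$ where $g_C=\gcd(C,N/C)$, giving $\phi(g_C)$ of them, so the total cusp number is $\sum_{C\mid N}\phi(g_C)$. Writing down a generator $g_\rho$ of $\Gamma_\rho$ explicitly via $\gamma_\rho\,\left(\begin{smallmatrix}1&t_\rho\\0&1\end{smallmatrix}\right)\gamma_\rho^{-1}$, one finds its lower-right entry has the form $d_\rho=1+aN/g_C$, so the admissibility condition $\chi(d_\rho)=1$ becomes $\chi(1+aN/g_C)=1$. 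The task is to show that this holds for all $\phi(g_C)$ cusps of denominator $C$ precisely when $g_C\mid N/m_\chi$ (equivalently $m_\chi\mid N/g_C$), and for none of them otherwise; summing the contributions then yields $e(N,\chi)$. When $\chi$ is trivial one has $m_\chi=1$, so $N/m_\chi=N$ and the divisibility condition holds for every $C$, recovering the full count of $\Gamma_0(N)$-inequivalent cusps.

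I expect the combinatorial step (B) to be the main obstacle, specifically the two-way translation of ``$\chi$ trivial on $\Gamma_\rho$'' into the clean condition $g_C\mid N/m_\chi$. The forward direction is easy: if $m_\chi\mid N/g_C$ then $1+aN/g_C\equiv 1\pmod{m_\chi}$ and $\chi(d_\rho)=1$ for every numerator $a$. The delicate direction is the converse, where one must verify that when $m_\chi\nmid N/g_C$ the character $\chi$ is already nontrivial on some element of the form $1+aN/g_C$ with $\gcd(a,g_C)=1$, i.e. that the numerators coprime to $g_C$ suffice to detect the nontriviality of $\chi$ on the subgroup $\{x\equiv 1\pmod{N/g_C}\}$ of $(\mathbb{Z}/N\mathbb{Z})^\times$. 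This bookkeeping is exactly the content of \cite[Proposition 8.5.15]{CS}, whose argument I would follow; the analytic input (convergence, membership in $M_{2-k}(N,\chi)$, and the diagonal cusp behaviour) is already guaranteed by $2-k\geq 3$ together with Theorem \ref{thm 3.4}.
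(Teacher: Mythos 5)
The paper does not actually prove this proposition: it is imported verbatim from Cohen--Str\"omberg as \cite[Proposition 8.5.15]{CS}, so there is no in-paper argument to compare yours against. Judged on its own terms, your outline is the standard proof and is essentially sound. Part (A) is fine: since $k<0$ gives weight $2-k\ge 3$, the series converge, and the diagonal constant-term behaviour from Theorem \ref{thm 3.4} immediately forces linear independence of the $E_{2-k,\rho}$ over the admissible cusps (those $\rho$ with $\chi$ trivial on $\Gamma_\rho$, which by the paper's definition are the only ones entering the span). Part (B) is also the right reduction: with the cusp $u/C$ one computes the generator of $\gamma_\rho^{-1}\Gamma_\rho\gamma_\rho$ explicitly and finds lower-right entry $1+uN/g_C$ with $g_C=\gcd(C,N/C)$, so admissibility is $\chi(1+uN/g_C)=1$, and the whole formula rests on the ``all or nothing per divisor $C$'' dichotomy you describe. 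You correctly isolate the only genuinely delicate point, namely that if $m_\chi\nmid N/g_C$ then $\chi$ is already nontrivial on \emph{every} element $1+uN/g_C$ with $\gcd(u,g_C)=1$ (equivalently, such an element generates the kernel of $(\mathbb{Z}/N)^\times\to(\mathbb{Z}/(N/g_C))^\times$, which requires a prime-by-prime check); you defer exactly this step to \cite{CS}. Since the paper defers the entire proposition to the same source, your proposal is, if anything, more detailed than what the paper provides, and I see no gap beyond that one deliberately outsourced lemma.
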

Denote by $\mathcal{E}^{\#}_k(N,\chi)\subseteq H^{\#}_k(N,\chi)$, the $\mathbb{C}$-span of the set of functions 
$\mathcal{F}_{k,\rho}(N,\chi;\tau)$ as ${\rho}$ varies over the $\Gamma_0(N)$-inequivalent cusps. We have the following corollary.
\begin{cor}\label{cor:1}
The restriction of the shadow operator $\xi_k$ to $\mathcal{E}^{\#}_k(N,\chi)$ is an isomorphism from
$\mathcal{E}^{\#}_k(N,\chi)$ to $\mathcal{E}_{2-k}(N,\overline{\chi})$. Moreover, we have 
\[
\text{dim}\ \mathcal{E}^{\#}_k(N,\chi)=\sum_{\substack{C \mid N\\\text{gcd}(C, N / C) \mid N / m_{\overline{\chi}}}} \phi(\operatorname{gcd}(C, N / C)),
\] 
where $m_{\overline{\chi}}$ is the conductor of $\overline{\chi}$.
\begin{proof}
Indeed, Theorem \ref{thm 3.6} shows that the above restriction of the shadow map is surjective. 
By using \eqref{2.7} and Lemma \ref{lemma:commutation}, we see that the kernel of the restricted 
shadow map is $M_k(N,\chi)$ (see \cite[Eq. 5.12]{Ono1}). For $k$ negative, $M_k(N,\chi) = \{0\}$. 
\end{proof}
\end{cor}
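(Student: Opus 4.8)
The plan is to show that the restricted map $\xi_k \colon \mathcal{E}^{\#}_k(N,\chi) \to \mathcal{E}_{2-k}(N,\overline{\chi})$ is simultaneously surjective and injective, conclude that it is an isomorphism, and then simply transport the dimension formula of Proposition \ref{prop 3.5} across this isomorphism.

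For surjectivity I would quote Theorem \ref{thm 3.6} verbatim: it records $\xi_k(\mathcal{F}_{k,\rho}(N,\chi;\tau)) = E_{2-k,\rho}(N,\overline{\chi};\tau)$ for each admissible cusp $\rho$. Since $\mathcal{E}^{\#}_k(N,\chi)$ is by definition the $\mathbb{C}$-span of the forms $\mathcal{F}_{k,\rho}$ and $\mathcal{E}_{2-k}(N,\overline{\chi})$ is the $\mathbb{C}$-span of the Eisenstein series $E_{2-k,\rho}(N,\overline{\chi})$ over the same range of cusps, linearity of $\xi_k$ forces the image of $\mathcal{E}^{\#}_k(N,\chi)$ to be exactly $\mathcal{E}_{2-k}(N,\overline{\chi})$.

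For injectivity I would compute the kernel. By the Fourier-coefficient formula \eqref{2.7}, a form $f \in H^!_k(N,\chi)$ with $\xi_k(f)=0$ must have every nonholomorphic coefficient $c_f^-(n)$ equal to zero, hence $f$ is holomorphic, i.e. $f \in M^!_k(N,\chi)$; intersecting with the polynomial-growth condition lands one in the honest space $M_k(N,\chi)$ (cf. \cite[Eq. 5.12]{Ono1}). Because $k$ is a negative integer, $M_k(N,\chi)=\{0\}$, so the kernel of $\xi_k$ on all of $H^{\#}_k(N,\chi)$, and a fortiori on the subspace $\mathcal{E}^{\#}_k(N,\chi)$, is trivial. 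Together with surjectivity this yields the claimed isomorphism, whence $\dim \mathcal{E}^{\#}_k(N,\chi) = \dim \mathcal{E}_{2-k}(N,\overline{\chi})$; applying Proposition \ref{prop 3.5} to the character $\overline{\chi}$ in weight $2-k$ produces the stated sum with $m_{\overline{\chi}}$ the conductor of $\overline{\chi}$.

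The one step that deserves genuine care is the kernel computation: I must make sure that vanishing of $\xi_k(f)$ really forces holomorphicity via \eqref{2.7}, and that imposing polynomial growth pins the form down to the finite-dimensional space $M_k(N,\chi)$ rather than the weakly holomorphic space $M^!_k(N,\chi)$, so that the vanishing $M_k(N,\chi)=\{0\}$ for negative $k$ can be invoked. Everything else---surjectivity and the final dimension count---is a formal consequence of linearity together with Theorem \ref{thm 3.6} and Proposition \ref{prop 3.5}.
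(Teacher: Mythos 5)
Your proposal is correct and follows essentially the same route as the paper: surjectivity from Theorem \ref{thm 3.6}, injectivity by computing the kernel via \eqref{2.7} and identifying it with $M_k(N,\chi)=\{0\}$ for negative $k$, and the dimension count by transporting Proposition \ref{prop 3.5} (applied to $\overline{\chi}$ in weight $2-k$) across the isomorphism. Your extra care in distinguishing $M_k^!(N,\chi)$ from $M_k(N,\chi)$ via the polynomial-growth condition is exactly the point the paper compresses into its citation of \cite[Eq. 5.12]{Ono1}.
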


\section{Intermediate results}%Dirichlet Series Attached to Functions in $H_k^{\#}(\Gamma_0(N))$

The aim of this section is to prepare ourselves for the next two sections by establishing some lemmas and Theorem \ref{thm 4.8} which can be considered as an analogue of Hecke's converse theorem for 
the space 
$H_k^\#(N, \chi)$. Some of these intermediate results might be of independent interest as well.     
\begin{lemma}
Let $k \leq 0$. Suppose 
\[
f(\tau)=\sum_{n=0}^{\infty}c_f^+(n)q^n+c_f^-(0)v^{1-k}+\sum_{n<0}c^-_f(n)\Gamma(1-k,-4\pi nv)q^n
\]
with $c_f^{\pm}(n)$ bounded by $O\left(|n|^{\sigma}\right)$ for some $\sigma \geq 0$. Then the series in the right-hand side is convergent absolutely and uniformly on any compact subset of $\mathbb{H}$, and defines a real analytic function on 
$\mathbb{H}$. Moreover, we have 
\begin{gather*}
f(\tau)=O(v^{-\sigma-1+k})~~~\text{as}~v\rightarrow 0,\\
f(\tau)-c_f^+(0)-c_f^-(0)v^{1-k} =O\left(e^{-2 \pi v}\right)~~~\text{as}~v\rightarrow \infty,
\end{gather*}
uniformly in $\mathrm{Re}(\tau)$.
\label{lemma 4.7}
\begin{proof}
To prove this lemma we make appropriate changes in the proof of \cite[Lemma 4.3.3]{miyake} as we deal with real analytic functions in place of holomorphic functions.  
By using Euler-Gauss formula \cite[Eq. 3.2.9]{miyake}, we have
\begin{equation}\label{eq:euler-gauss}
\lim_{n\to\infty}\frac{n^{\sigma}}{(-1)^n{-\sigma-1\choose n}}=\Gamma(\sigma+1),
\end{equation}
where 
$$
{-\sigma-1\choose n} =\frac{(-\sigma-1) ((-\sigma-1)-1) ((-\sigma-1)-2) \ldots ((-\sigma-1)-(n-1))}{n!}.
$$
Since $c_f^+(n) = O(n^\sigma), \ n \geq 1$, there exists a constant $L > 0$ such that 
\[
|c_f^+(n)|\leq L (-1)^n {-\sigma-1\choose n}, ~~~n \geq 1.
\]
Since $k \leq 0$, $1-k \geq 1$ and therefore by using \eqref{eq:gammasum}, for any $n\geq 1$ we have 
\[
\begin{split}
\Gamma(1-k,4\pi nv) & =\Gamma(1-k) e^{-4\pi n v} \sum_{l=0}^{-k} \frac{(4\pi n v)^l}{l!}\\
&= \left\{\begin{array}{cc} O(e^{-4\pi n v}), \ \ & {\rm if} \ v \rightarrow 0, \\ O(v^{-k} e^{-4 \pi n v}), \ \ & {\rm if} \ v \rightarrow \infty. \end{array}\right.
\end{split} 
\]
%Therefore for any $n \geq 1$, we have
%\[
%c_f^-(-n)\Gamma(1-k,4\pi nv)=O(n^{\sigma} e^{-4 \pi n v}),  \ \ {\rm as} \ v \rightarrow 0.
%\]
Since $c_f^-(-n) = O(n^\sigma)$, by using \eqref{eq:euler-gauss} once again, we have 
\[
|c_f^-(-n) \Gamma(1-k,4\pi nv)| \leq  L' (-1)^n e^{-4 \pi n v} \sum_{l=0}^{-k} \frac{(4\pi v)^l}{l!} {-\sigma- l -1\choose n},
%  \ \ {\rm as} \ v \rightarrow 0.
\]
%Now we have
%\[
%\left|\sum_{n\geq 0}c^+_f(n)q^n\right| \leq L \sum_{n=0}^{\infty} (-1)^n {-\sigma-1\choose n} e^{-2\pi n v}.
%\]
%\[
%\left|\sum_{n\leq -1}c^-_f(n)\Gamma(1-k,-4\pi n v)q^n\right| \leq L \sum_{n=1}^{\infty} (-1)^n {-\sigma-1\choose n}e^{-2\pi n v}
%\ \ {\rm as} \ v \rightarrow 0.
%\]
%Also, we have
%\[
%\sum_{n=1}^{\infty}(-1)^n {-\sigma-1\choose n} e^{-2\pi n v}=(1-e^{-2\pi v})^{-\sigma-1} - 1.
%\]
for some constant $L'$. Thus we have
\begin{equation}\label{eq:fdecay}
\begin{split}
|f(\tau)|  & \leq L \sum_{n=0}^{\infty} (-1)^n {-\sigma-1\choose n} e^{-2\pi n v} + O (v^{1-k})\\ 
&\hspace*{6cm}+ L'  
\sum_{l=0}^{-k} \frac{(4\pi v)^l}{l!}\sum_{n=1}^{\infty} (-1)^n {-\sigma-l-1\choose n}e^{-2\pi n v}\\
& \leq L (1-e^{-2\pi v})^{-\sigma-1} + O(v^{1-k}) + L' \sum_{l=0}^{-k} \frac{(4\pi v)^l}{l!} ((1-e^{-2\pi v})^{-\sigma -l-1} -1). 
%\label{eq:fdecay}
%\ \ {\rm as}\ v\rightarrow 0.
\end{split}
\end{equation}
This estimate implies that the given series $f(\tau)$ is convergent absolutely and uniformly on any compact subset 
of $\mathbb{H}$. Also, from \eqref{2.7} we have
\begin{gather*}
\xi_k(c_f^+(n) q^n) = 0, \ \ \xi_k(c_f^-(0) v^{1-k}) = (1-k)\overline{c_f^{-}(0)},\\
\xi_k(c^-_f(-n)\Gamma(1-k, 4\pi n v)q^{-n}) = -(4\pi)^{1-k} \overline{c_f^{-}(-n)} n^{1-k} q^n, \ \ n \geq 1.
\end{gather*}
By using the bound $c_f^{-}(-n) = O(n^\sigma)$, we get that the series 
$$
(1-k)\overline{c_f^{-}(0)} -(4\pi)^{1-k} \sum_{n \geq 1} \overline{c_f^{-}(-n)} n^{1-k} q^n
$$ 
is uniformly convergent on any compact subset of $\mathbb{H}$ and defines a holomorphic function.
Moreover, by using \eqref{2.5} and \eqref{2.7} we have
\[
\Delta_k (f(\tau)) = -\xi_{2-k} (\xi_k(f(\tau))) = 0.
\] 
Since solutions of elliptic partial differential equations with real analytic coefficients, such as $\Delta_k(F) = 0$, are real analytic 
(cf. \cite[Page 57]{FJ}), therefore $f(\tau)$ defines a real analytic function on $\mathbb{H}$.

Since $1-k \geq 1$ and $(1-e^{-2\pi v})^{-\sigma-1} = O(v^{-\sigma-1})$ as $v \rightarrow 0$, by using \eqref{eq:fdecay} we have $|f(\tau)| = v^{-\sigma-1+k}$ as $v \rightarrow 0$.

Next, we have
\[
f(\tau)-c_f^+(0)-c_f^-(0)v^{1-k} =  e^{2\pi i \tau} g(\tau),\]
where
\[
g(\tau) = \sum_{n=0}^{\infty}c_f^+(n+1)q^n+\sum_{n\leq -2}c^-_f(n+1)\Gamma(1-k,-4\pi (n+1) v) q^n.
\]
By using the asymptotic formula given in \eqref{gamma},
% $\Gamma(s, x) \sim x^{s-1} e^{-x}$ for $x \in \mathbb{R}$ and 
% $|x| \rightarrow \infty$ \cite[Eq. 4.6]{Ono1}, 
for any $n \leq -2$ we have
\[
|\Gamma(1-k, -4\pi(n+1) v) q^n| \leq (4\pi (-n-1) v)^{-k} e^{2\pi (n+2) v} \ \ {\rm as}\ v\ \rightarrow \infty.
\]
Now by following the calculations done to obtain \eqref{eq:fdecay}, we get that $g(\tau)$ is bounded in a neighborhood of
$i \infty$. Thus we have
\[
f(\tau)-c_f^+(0)-c_f^-(0)v^{1-k} = O(e^{-2\pi v}) \ \ {\rm as} \ v \rightarrow \infty.
\]
\end{proof}
\end{lemma}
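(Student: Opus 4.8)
The plan is to follow the strategy of \cite[Lemma 4.3.3]{miyake}, but with two substantial changes dictated by the shape of \eqref{3.1}: the majorant must also absorb the incomplete--gamma terms $\Gamma(1-k,-4\pi nv)q^n$, and \emph{real analyticity} can no longer be read off from ``a convergent $q$-expansion is holomorphic'' and must instead be deduced from the Laplace equation. I would split $f$ into its three constituents and estimate them separately.

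For the convergence and the growth as $v\to 0$, I would first use the Euler--Gauss formula \cite[Eq.~3.2.9]{miyake} to rewrite the hypothesis $c_f^+(n)=O(n^{\sigma})$ as a domination by the binomial coefficients $(-1)^n\binom{-\sigma-1}{n}$, so that the generalized binomial theorem sums the holomorphic majorant to a multiple of $(1-e^{-2\pi v})^{-\sigma-1}$. For the non-holomorphic tail I would exploit that $1-k$ is a positive integer to expand $\Gamma(1-k,4\pi mv)$ by the finite sum \eqref{eq:gammasum}; after the crucial cancellation of $|q^{-m}|=e^{2\pi mv}$ against the factor $e^{-4\pi mv}$, each power $l=0,\dots,-k$ yields a series $\sum_m m^{\sigma+l}e^{-2\pi mv}$, which the same binomial device sums to $(1-e^{-2\pi v})^{-\sigma-l-1}-1$ with weight $(4\pi v)^l/l!$. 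This produces an explicit majorant for $|f(\tau)|$ that converges absolutely and locally uniformly, and from which the $v\to 0$ behaviour is immediate: since $1-e^{-2\pi v}\sim 2\pi v$, every summand is $O(v^{-\sigma-1})$ while the $c_f^-(0)v^{1-k}$ term is negligible, so $f=O(v^{-\sigma-1})$; because $k\le 0$ forces $v^{-\sigma-1}=O(v^{-\sigma-1+k})$ as $v\to 0$, the stated bound follows.

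The genuinely new point is real analyticity. Using local uniform convergence I would apply $\xi_k$ termwise via \eqref{2.7}: the holomorphic part dies, the term $c_f^-(0)v^{1-k}$ contributes the constant $(1-k)\overline{c_f^-(0)}$, and each incomplete--gamma term contributes $-(4\pi)^{1-k}\overline{c_f^-(-m)}m^{1-k}q^{m}$; the polynomial bound makes this a convergent $q$-series, so $\xi_k(f)$ is holomorphic. Hence $\xi_{2-k}\bigl(\xi_k(f)\bigr)=0$, and by \eqref{2.5} this means $\Delta_k f=0$. As $\Delta_k$ is an elliptic operator with real analytic coefficients, elliptic regularity \cite[Page~57]{FJ} then upgrades $f$ to a real analytic function. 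To make $\Delta_k f=0$ legitimate before smoothness is known, I would observe that every partial sum is annihilated by $\Delta_k$ and that $\Delta_k$ commutes with the locally uniform limit in the distributional sense.

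It remains to treat $v\to\infty$, and here lies the one delicate point. After subtracting the constant $c_f^+(0)$ and the term $c_f^-(0)v^{1-k}$, I would factor a single power of $q$ out of the remainder, writing it as $e^{2\pi i\tau}g(\tau)$ with $g$ an index-shifted series of the same type. The holomorphic part of $g$ is bounded near $i\infty$, while the asymptotic \eqref{gamma} bounds each incomplete--gamma term of $g$ by $(4\pi(-n-1)v)^{-k}e^{2\pi(n+2)v}$ for $n\le -2$, so that $g$ grows at worst polynomially in $v$; multiplying by $|e^{2\pi i\tau}|=e^{-2\pi v}$ then yields the claimed decay $O(e^{-2\pi v})$. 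The main obstacle throughout is thus not any single estimate but the interplay forced by the non-holomorphic terms: one must keep the exponential cancellations in the incomplete--gamma bounds exact (using the finite expansion \eqref{eq:gammasum} near $v=0$ and the asymptotic \eqref{gamma} near $v=\infty$) and, above all, must replace the trivial holomorphy argument of \cite{miyake} by the $\Delta_k f=0$ and ellipticity route to secure real analyticity.
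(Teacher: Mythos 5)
Your proposal is correct and follows essentially the same route as the paper's own proof: the Euler--Gauss binomial majorant combined with the finite expansion \eqref{eq:gammasum} for the incomplete gamma terms, real analyticity via $\xi_k$ and $\Delta_k=-\xi_{2-k}\circ\xi_k$ plus elliptic regularity, and the factorization $f-c_f^+(0)-c_f^-(0)v^{1-k}=e^{2\pi i\tau}g(\tau)$ with \eqref{gamma} for the decay at $i\infty$. The only addition is your remark on justifying $\Delta_k f=0$ distributionally before smoothness is known, which the paper leaves implicit.
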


\begin{lemma}
Let $k \in \mathbb{Z}$. 
Let $f, g$ be any two real analytic functions on $\mathbb{H}$ such that $g=f|_k\omega(N)$. Let
\begin{equation*}
H(\tau)=2iv\frac{\partial f}{\partial u}(\tau)+kf(\tau),~~I(\tau)=2iv\frac{\partial g}{\partial u}(\tau)+kg(\tau), \ \ 
\tau = u+iv \in \mathbb{H}.
\label{4.7}
\end{equation*}
Then for any $t>0$ we have
\[
H|_k\omega(N) (i t) = N^{-k/2} (i t)^{-k} H\left(-\frac{1}{N (it)}\right)=-I(it).
\]
\label{lemma 4.6}
\begin{proof}
We have 
\[
g(\tau)=N^{-k / 2}\tau^{-k} f\left(-\frac{1}{N \tau}\right).
\]
Differentiating both sides with respect to $u$, we get 
\[
\frac{\partial g}{\partial u}(\tau)=-kN^{-k / 2}\frac{\tau^{-k}}{\tau} f\left(-\frac{1}{N \tau}\right)+N^{-k / 2}\tau^{-k}\frac{\partial f}{\partial u}\left(-\frac{1}{N \tau}\right)\left(\frac{1}{N\tau^2}\right).
\]
Multiplying both sides by $2iv$ and evaluating at $\tau=it$, we get 
\[
\begin{split}
2it\frac{\partial g}{\partial u}(it)&+kN^{-k/2}i^{-k}t^{-k}f\left(\frac{i}{Nt}\right)=-kN^{-k/2}i^{-k}t^{-k}f\left(\frac{i}{Nt}\right)\\&\hspace{5.5cm}-2itN^{-k/2}i^{-k}t^{-k}\left(\frac{1}{Nt^2}\right)\frac{\partial f}{\partial u}\left(\frac{i}{Nt}\right)\\&\implies I(it)=-N^{-k / 2}i^{-k}t^{-k}\left(2i\text{Im}\left(\frac{i}{Nt}\right)\frac{\partial f}{\partial u}\left(\frac{i}{Nt}\right)+kf\left(\frac{i}{Nt}\right)\right)\\&\implies I(it)=-N^{-k / 2}i^{-k}t^{-k}H\left(\frac{i}{Nt}\right).
\end{split}
\]
\end{proof}
\end{lemma}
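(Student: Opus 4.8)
The first equality is simply the definition of the slash operator specialized to $\tau=it$: since $\det\omega(N)=N$ and $j(\omega(N),\tau)=N\tau$, we have $H|_k\omega(N)(\tau)=N^{-k/2}\tau^{-k}H(-1/(N\tau))$, which at $\tau=it$ is the stated expression. All the content is in the second equality $N^{-k/2}(it)^{-k}H(-1/(N(it)))=-I(it)$, and my plan is to route it through the Maass operators rather than differentiate a real-analytic function head-on.

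First I would rewrite $H$ intrinsically. Using $\partial/\partial u=\partial/\partial\tau+\partial/\partial\bar\tau$ together with $R_k=2i\,\partial/\partial\tau+k/v$ and $L_k=-2iv^2\,\partial/\partial\bar\tau$, a one-line computation gives
\[
H=v\,R_k(f)-\tfrac{1}{v}L_k(f),\qquad I=v\,R_k(g)-\tfrac{1}{v}L_k(g).
\]
Because $g=f|_k\omega(N)$, the commutation relations of Lemma \ref{lemma:commutation} turn the second identity into $I=v\,\bigl(R_k(f)|_{k+2}\omega(N)\bigr)-\tfrac{1}{v}\bigl(L_k(f)|_{k-2}\omega(N)\bigr)$, expressing $I$ entirely in terms of $R_k(f)$ and $L_k(f)$ evaluated at $\omega(N)\tau$.

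Next I would specialize to $\tau=it$, where $v=t$, $\omega(N)(it)=i/(Nt)$, and $\mathrm{Im}(\omega(N)(it))=1/(Nt)$. Writing out the two slashes $R_k(f)|_{k+2}\omega(N)(it)$ and $L_k(f)|_{k-2}\omega(N)(it)$ from the definition (with the automorphy factors for weights $k+2$ and $k-2$), and expanding the target $-N^{-k/2}(it)^{-k}H(i/(Nt))=-N^{-k/2}(it)^{-k}\bigl(\tfrac{1}{Nt}R_k(f)(i/(Nt))-Nt\,L_k(f)(i/(Nt))\bigr)$, I would match the coefficients of $R_k(f)(i/(Nt))$ and of $L_k(f)(i/(Nt))$ on the two sides. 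I expect the \emph{main obstacle} to be purely bookkeeping: tracking the three distinct automorphy factors for weights $k,k+2,k-2$ against the powers of $v=t$. The agreement hinges on $(it)^{\pm2}=-t^2$, which converts the extra $(it)^{\mp2}$ produced by each weight shift into the $\mp t^2$ needed to balance the accompanying $v$- and $1/v$-factors; the two terms then match with exactly the sign that yields $-I(it)$.

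As an alternative I would differentiate $g(\tau)=N^{-k/2}\tau^{-k}f(-1/(N\tau))$ in $u$ directly via the product and chain rules. The subtlety is that $f$ is only real analytic, so the naive identity $\partial_u\bigl[f(-1/(N\tau))\bigr]=\partial_u f(-1/(N\tau))\cdot 1/(N\tau^2)$ is valid only where $1/(N\tau^2)$ is real. But that is precisely what happens on the imaginary axis, since $1/(N(it)^2)=-1/(Nt^2)\in\mathbb{R}$; hence differentiating first and evaluating at $\tau=it$ afterwards is legitimate, and multiplying through by $2iv=2it$ and collecting terms recovers $I(it)=-N^{-k/2}(it)^{-k}H(i/(Nt))$. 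I would lead with the operator route, since it sidesteps this real-analyticity caveat entirely.
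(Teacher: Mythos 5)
Your proof is correct, and it takes a genuinely different route from the paper's. The paper argues exactly along the lines of your ``alternative'': it differentiates $g(\tau)=N^{-k/2}\tau^{-k}f(-1/(N\tau))$ in $u$ via the naive chain rule $\partial_u[f(-1/(N\tau))]=\partial_u f(-1/(N\tau))\cdot\frac{1}{N\tau^2}$, multiplies by $2iv$, and evaluates at $\tau=it$. As you correctly point out, for a merely real-analytic $f$ that chain-rule identity reads $\frac{1}{N\tau^2}f_w+\frac{1}{N\bar\tau^2}f_{\bar w}$ versus $\frac{1}{N\tau^2}(f_w+f_{\bar w})$, so it is valid precisely where $\tau^2$ is real, i.e.\ on the imaginary axis --- which is where the paper evaluates, so its proof is sound, but the intermediate displayed formula for general $\tau$ is only an identity after restriction to $\tau=it$. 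Your primary route sidesteps this entirely: the decomposition $H=vR_k(f)-\tfrac{1}{v}L_k(f)$ (which follows from $\partial_u=\partial_\tau+\partial_{\bar\tau}$) splits the troublesome $\partial_u$ into its holomorphic and antiholomorphic parts, each of which transforms cleanly under $\omega(N)$ by Lemma \ref{lemma:commutation}, and the bookkeeping with $(it)^{\pm 2}=-t^2$ closes the computation exactly as you predict, giving $I(it)=-N^{-k/2}(it)^{-k}\bigl(\tfrac{1}{Nt}R_kf(i/(Nt))-Nt\,L_kf(i/(Nt))\bigr)=-N^{-k/2}(it)^{-k}H(i/(Nt))$. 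What your approach buys is robustness (no restriction to the imaginary axis is needed to justify any intermediate step, and the same computation would give the transformation of $H$ under any $\alpha\in\mathrm{GL}_2^+(\mathbb{Q})$); what the paper's buys is brevity and independence from the operator formalism.
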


\begin{lemma}
For any $\nu>0$, the function $W_{\nu}(s)$ $($defined by \eqref{eq:wnu}$)$ is an analytic function in the half-plane 
$\emph{Re}(s)>0$. Moreover, for any 
$x, \sigma > 0$ 
we have 
\begin{equation}\label{eq:lemma w}
\Gamma(\nu, 2x) e^{-x}=\frac{1}{2\pi i}\int_{\sigma-i\infty}^{\sigma+i\infty}x^{-s}W_{\nu}(s)ds.
\end{equation}
\label{lemma w} 
\begin{proof}
We have $|\Gamma(\nu, 2x)| \leq \Gamma(\nu)$ for $x \geq 0$. By using \eqref{gamma}
%for any $s \in \mathbb{C}$ with $\text{Re}(s)>0$, 
%we have 
%the integral for $W_{\alpha}(s)$ converges absolutely. Indeed we have 
%\[
%|W_{\nu}(s)|%\left|\int_{0}^{\infty}x^{s-1}\Gamma(\alpha,2x)e^{-x}dx\right|
%\leq \Gamma(\nu)\int_0^{\infty}x^{\text{Re}(s)-1}e^{-x}dx=\Gamma(\nu)\Gamma(\text{Re}(s)).
%\]    
we get that $W_{\nu}(s)$ defines an analytic function in the half-pane $\text{Re}(s)>0$.
Since $W_{\nu}(s)$ is the Mellin transform of $f_{\nu}(x)=\Gamma(\nu,2x)e^{x}$, 
to prove \eqref{eq:lemma w} it is sufficient to check the hypothesis of Mellin inversion theorem (see \cite[Proposition 3.1.22]{CS}) for 
$f_{\nu}(x)$. Indeed this is the case as $f_{\nu}(x)$ is a continuous function and  $|f_\nu(x)| \leq \Gamma(\nu) e^{x}$ for $x \geq 0$. The latter inequality gives us that $f_\nu(x) = O(1)$ as $x \rightarrow 0$. 
By using \eqref{gamma}, we get the required growth condition as $x \rightarrow \infty$.  
%observe that since the integral for $\Gamma(s,z)$ converges absolutely for $\text{Re}(s)>0$ and 
%every $z\in\mathbb{C}$, thus for every $\alpha,x>0$ we have that $f_{\alpha}(x)$ is continuous. Thus Mellin %inversion Theorem is applicable and the proof is complete.
\end{proof}
\end{lemma}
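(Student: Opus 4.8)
The plan is to identify $W_\nu(s)$ as the Mellin transform of the single-variable function $f_\nu(x) = \Gamma(\nu,2x)e^{x}$ and to extract both the analyticity and the inversion formula \eqref{eq:lemma w} from the standard Mellin inversion theorem, once the behaviour of $f_\nu$ at the two endpoints $x \to 0^+$ and $x \to \infty$ is under control.

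The first step would be to record two elementary bounds on the incomplete gamma factor. Since $\nu > 0$, the integrand $e^{-t}t^{\nu-1}$ in the definition \eqref{2.3} is nonnegative, so $\Gamma(\nu,2x) = \int_{2x}^{\infty} e^{-t}t^{\nu-1}\,dt \le \int_{0}^{\infty} e^{-t}t^{\nu-1}\,dt = \Gamma(\nu)$ for all $x \ge 0$; in particular $|f_\nu(x)| \le \Gamma(\nu)e^{x}$, which is $O(1)$ as $x \to 0^+$. At the other end I would invoke the asymptotic \eqref{gamma}: it gives $\Gamma(\nu,2x) \sim (2x)^{\nu-1}e^{-2x}$, so $f_\nu(x)$ decays like $(2x)^{\nu-1}e^{-x}$ as $x \to \infty$, i.e.\ faster than any negative power of $x$.

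With these two estimates, analyticity of $W_\nu$ on $\mathrm{Re}(s) > 0$ follows at once. Writing $W_\nu(s) = \int_0^\infty f_\nu(x) x^{s-1}\,dx$, the integrand is dominated near $0$ by a constant times $x^{\sigma - 1}$ with $\sigma = \mathrm{Re}(s) > 0$, hence integrable there, and near $\infty$ by an exponentially small factor; the bounds are locally uniform in $s$ on the half-plane, so absolute and locally uniform convergence hold and $W_\nu$ is holomorphic, which one may confirm either by differentiating under the integral sign or by Morera's theorem combined with Fubini.

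For the inversion statement I would appeal to the Mellin inversion theorem in the form \cite[Proposition 3.1.22]{CS}, whose hypotheses reduce exactly to what the two endpoint estimates provide: $f_\nu$ is continuous on $(0,\infty)$, is $O(1)$ as $x \to 0$, and decays exponentially as $x \to \infty$, so its fundamental strip is the full half-plane $\mathrm{Re}(s) > 0$ and the inverse integral converges along every vertical line with abscissa $\sigma > 0$. The theorem then returns $f_\nu$ as the inverse Mellin transform of $W_\nu$, which yields the identity \eqref{eq:lemma w}. I expect the only genuinely delicate point to be the verification of these hypotheses---in particular, that the exponential decay furnished by \eqref{gamma} really does place the right edge of the fundamental strip at $+\infty$, so that no growth obstruction prevents inversion along an arbitrary $\sigma > 0$; everything else is routine.
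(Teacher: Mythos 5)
Your proposal is correct and follows essentially the same route as the paper: bound $\Gamma(\nu,2x)$ by $\Gamma(\nu)$ near $x=0$, use the asymptotic \eqref{gamma} to get exponential decay of $f_\nu(x)=\Gamma(\nu,2x)e^{x}$ at infinity, deduce analyticity of the Mellin transform on $\mathrm{Re}(s)>0$, and then invoke Mellin inversion. The extra detail you supply (the monotonicity argument for the bound and the explicit asymptotic $f_\nu(x)\sim(2x)^{\nu-1}e^{-x}$) only fills in steps the paper leaves implicit.
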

\begin{lemma}\label{lemma:decay}
If $\nu$ is a positive integer then for any $\mu > 0$, we have $W_\nu(s) = O({\rm Im}(s)^{-\mu})$ 
on any line ${\rm Re}(s) = \sigma, \sigma > 0,$ as $|{\rm Im}(s)| \rightarrow \infty$.
\end{lemma}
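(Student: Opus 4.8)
The plan is to evaluate $W_\nu(s)$ in closed form and then exploit the exponential decay of the Gamma function along vertical lines, which is far stronger than the polynomial decay claimed in the statement.

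First, since $\nu$ is a positive integer, I would substitute the closed form \eqref{eq:gammasum} into the definition \eqref{eq:wnu}. Writing $\Gamma(\nu,2x)=\Gamma(\nu)e^{-2x}\sum_{l=0}^{\nu-1}(2x)^l/l!$, the factor $e^{x}$ occurring in \eqref{eq:wnu} converts $e^{-2x}$ into $e^{-x}$, so that
\[
W_\nu(s)=\Gamma(\nu)\sum_{l=0}^{\nu-1}\frac{2^l}{l!}\int_0^\infty e^{-x}x^{s+l}\frac{dx}{x}.
\]
Because the sum is finite, no interchange of summation and integration needs to be justified; each integral converges absolutely for $\mathrm{Re}(s)>0$ (indeed for $\mathrm{Re}(s)>-l$) and equals $\Gamma(s+l)$. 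Hence
\[
W_\nu(s)=\Gamma(\nu)\sum_{l=0}^{\nu-1}\frac{2^l}{l!}\,\Gamma(s+l).
\]

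With this closed form in hand, the decay statement is immediate. On the line $\mathrm{Re}(s)=\sigma$ with $\sigma>0$, each shifted argument satisfies $\mathrm{Re}(s+l)=\sigma+l>0$, so there are no poles on the line and Stirling's asymptotic formula applies to every summand, giving $|\Gamma(\sigma+l+it)|=O\big(|t|^{\sigma+l-1/2}e^{-\pi|t|/2}\big)$ as $|t|\to\infty$. Each term therefore decays exponentially in $|\mathrm{Im}(s)|$, and \emph{a fortiori} faster than any fixed negative power $|\mathrm{Im}(s)|^{-\mu}$. Summing the finitely many terms preserves this bound, yielding $W_\nu(s)=O(|\mathrm{Im}(s)|^{-\mu})$ on $\mathrm{Re}(s)=\sigma$ for every $\mu>0$.

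There is no serious obstacle here: the entire content lies in recognizing that the integrality of $\nu$ collapses the incomplete Gamma function to an elementary exponential-polynomial, after which the result reduces to the classical vertical decay of $\Gamma$. The only minor point worth recording is that the polynomial factor $|t|^{\sigma+l-1/2}$ produced by Stirling is harmless, since it is absorbed by the exponential factor $e^{-\pi|t|/2}$; it is precisely this exponential decay that makes the bound valid for arbitrarily large $\mu$, rather than only for a single exponent determined by $\sigma$.
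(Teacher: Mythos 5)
Your proposal is correct and follows essentially the same route as the paper: substitute the finite-sum formula \eqref{eq:gammasum} for $\Gamma(\nu,2x)$ into \eqref{eq:wnu} to obtain $W_\nu(s)=\Gamma(\nu)\sum_{l=0}^{\nu-1}\frac{2^l}{l!}\Gamma(s+l)$, then apply Stirling's estimate on vertical lines. The only difference is that you spell out the exponential decay explicitly, which the paper leaves implicit.
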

\begin{proof}
By using \eqref{eq:gammasum}, we have
\[
\begin{split}
W_\nu(s) = \Gamma(\nu) \sum_{l=0}^{\nu-1} \frac{2^l}{l!} \int_0^\infty e^{-x} x^{s+l} \frac{dx}{x}
= \Gamma(\nu) \sum_{l=0}^{\nu-1} \frac{2^l}{l!} \Gamma(s+l).\\
\end{split}
\]
Now by using Stirling's estimate for the gamma function \cite[Eq. (3.2.8)]{miyake}, the lemma follows.
\end{proof}

%Now we prove a theorem which can be considered as an analogue of the Hecke's converse theorem
%for the space $H_k^\#(\Gamma_0(N), \chi)$.
\begin{thm}
Let $k$ be a negative integer and $N$ be a positive integer. Let $f$ and $g$ be two functions defined on 
$\mathbb{H}$ given by the formal Fourier series \eqref{eq:fgfourier}
%\begin{equation}
%\begin{split}
%f(\tau)=\sum_{n=0}^{\infty}c_f^+(n)q^n+c_f^-(0)v^{1-k}+\sum_{n<0}c^-_f(n)\Gamma(1-k,-4\pi nv)q^n\\
%g(\tau)=\sum_{n=0}^{\infty}c_g^+(n)q^n+c_g^-(0)v^{1-k}+\sum_{n<0}c^-_g(n)\Gamma(1-k,-4\pi nv)q^n
%\label{4.8}
%\end{split}
%\end{equation}
with $c_f^{\pm}(n), c_g^{\pm}(n)$ bounded by $O\left(|n|^{\alpha}\right), n \in \mathbb{Z}$, for some 
$\alpha \geq 0$. Then the following two statement are equivalent.
\begin{enumerate}
\item $g=f|_k\ \omega(N)$.
\item The completed Dirichlet series $\Lambda_N(f,s), \Lambda_N(g,s), \Omega_N(f,s),\Omega_N(g,s)$ satisfy condition (2) (a) of Theorem \ref{thm:main}. 

\begin{comment}
can be extended meromorphically to the whole complex plane and satisfy the functional equations 
\begin{equation}
\Lambda_N(f,s)=i^{k} \Lambda_N(g,k-s),~~~\Omega_N(f,s)=-i^{k} \Omega_N(g,k-s)
\label{4.9}
\end{equation}
with each 
\begin{equation}
\begin{split}
&\Lambda^*_N(f,s)=\Lambda_N(f,s)+\frac{c_f^+(0)}{s}+\frac{c_g^+(0) i^{k}}{k-s}+\frac{c^-_f(0)}{N^{\frac{1-k}{2}}}\frac{1}{s-k+1}+\frac{c_g^-(0)i^k}{N^{\frac{1-k}{2}}}\frac{1}{1-s}\\
&\Lambda^*_N(g,s)=\Lambda_N(g,s)+\frac{c_g^+(0)}{s}+\frac{c_f^+(0) i^{-k}}{k-s}+\frac{c^-_g(0)}{N^{\frac{1-k}{2}}}\frac{1}{s-k+1}+\frac{c_f^-(0)i^{-k}}{N^{\frac{1-k}{2}}}\frac{1}{1-s}\\
&\Omega^*_N(f,s)=\Omega_N(f,s)+k\left(\frac{c_f^+(0)}{s}-\frac{c_g^+(0) i^{k}}{k-s}+\frac{c^-_f(0)}{N^{\frac{1-k}{2}}}\frac{1}{s-k+1}-\frac{c_g^-(0)i^k}{N^{\frac{1-k}{2}}}\frac{1}{1-s}\right)
\\
&\Omega^*_N(g,s)=\Omega_N(g,s)+k\left(\frac{c_g^+(0)}{s}-\frac{c_f^+(0) i^{-k}}{k-s}+\frac{c^-_g(0)}{N^{\frac{1-k}{2}}}\frac{1}{s-k+1}-\frac{c_f^-(0)i^{-k}}{N^{\frac{1-k}{2}}}\frac{1}{1-s}\right)
\label{4.10}
\end{split}
\end{equation}
entire and bounded in any vertical strip.
\label{thm 4.8(2)}
\end{comment}

\end{enumerate}
\label{thm 4.8}
\begin{proof}%[Proof of Theorem \ref{thm 4.8}]
$(1)\implies (2).$ We have 
\[
\begin{split}
c_f^-(-n)\int_{0}^{\infty}\Gamma(1-k,4\pi nt/\sqrt{N})e^{-2\pi nt/\sqrt{N}} t^s \frac{dt}{t}&=\left(\frac{2\pi n}{\sqrt{N}}\right)^{-s}c_f^-(-n)\int_{0}^{\infty}\Gamma(1-k,2x)e^{-x} x^s \frac{dx}{x}\\&=\left(\frac{2\pi}{\sqrt{N}}\right)^{-s} W_{1-k}(s) \frac{c_f^-(-n)}{n^s},
\end{split} 
\]
and
\[
c_f^+(n)\int_0^{\infty}e^{-2\pi nt/\sqrt{N}}t^{s}\frac{dt}{t}=\left(\frac{2\pi}{\sqrt{N}}\right)^{-s}\Gamma(s)
\frac{c_f^+(n)}{n^s}.
\]
Now for Re$(s)>\alpha+1$, we have 
\[
\begin{split}
\int_0^{\infty}\bigg(f\left(\frac{it}{\sqrt{N}}\right)-c^+_f(0)&-\frac{c^-_f(0)}{N^{\frac{1-k}{2}}}t^{1-k}\bigg)t^s\frac{dt}{t}\\&=\int_0^{\infty}\left(\sum_{n=1}^{\infty}c_f^+(n)e^{-2\pi nt/\sqrt{N}}\right)t^{s}\frac{dt}{t}\\&+\int_{0}^{\infty}\left(\sum_{n=1}^{\infty}c_f^-(-n)\Gamma(1-k,4\pi nt/\sqrt{N})e^{2\pi nt/\sqrt{N}}\right)t^s\frac{dt}{t}\\&=\left(\frac{2\pi}{\sqrt{N}}\right)^{-s}\left[\Gamma(s) L^+(f,s)+W_{1-k}(s)L^-(f,s)\right]\\&=\Lambda_N(f,s).
\end{split}
\]
Therefore for Re$(s) > \alpha +1$, we have 
\[
\begin{split}
\Lambda_N(f,s)&=\int_0^{\infty}\bigg{(}f\left(\frac{it}{\sqrt{N}}\right)-c^+_f(0)-\frac{c^-_f(0)}{N^{\frac{1-k}{2}}}t^{1-k}\bigg{)}t^s\frac{dt}{t}\\ 
&=\int_0^{1}+\int_1^{\infty}\left(f\left(\frac{it}{\sqrt{N}}\right)-c^+_f(0)-\frac{c^-_f(0)}{N^{\frac{1-k}{2}}}t^{1-k}\right)t^s\frac{dt}{t}\\
&=\int_1^{\infty}\left(f\left(\frac{it}{\sqrt{N}}\right)-c_f^+(0)-\frac{c^-_f(0)}{N^{\frac{1-k}{2}}}t^{1-k}\right)t^s\frac{dt}{t}+\int_1^{\infty}f\left(\frac{i}{\sqrt{N}t}\right)t^{-s}\frac{dt}{t}\\
&-\frac{c_f^+(0)}{s}-\frac{c^-_f(0)}{N^{\frac{1-k}{2}}}\frac{1}{s-k+1}
\\&=\int_1^{\infty}\left(f\left(\frac{it}{\sqrt{N}}\right)-c_f^+(0)-\frac{c^-_f(0)}{N^{\frac{1-k}{2}}}t^{1-k}\right)t^s\frac{dt}{t}+i^{k}\int_1^{\infty}g\left(\frac{it}{\sqrt{N}}\right)t^{k-s}\frac{dt}{t}\\
&-\frac{c_f^+(0)}{s}-\frac{c^-_f(0)}{N^{\frac{1-k}{2}}}\frac{1}{s-k+1}.
\end{split}
\]
For Re$(s)>k$, we have  
\[
\int_{1}^{\infty}c_g^+(0)t^{k-s-1}dt=-\frac{c_g^+(0)}{k-s}.
\] 
For Re$(s) > 1$, we have
\[
\int_{1}^{\infty}\frac{c^-_g(0)}{N^{\frac{1-k}{2}}}t^{1-k}t^{k-s-1}dt=-\frac{c_g^-(0)}{N^{\frac{1-k}{2}}}\frac{1}{1-s}.
\]
Therefore for Re$(s)> \alpha+1$, we have 
\begin{equation}\label{eq:lambdaf}
\begin{split}
\Lambda^*_N(f,s)=\Lambda_N(f,s)+\frac{c_f^+(0)}{s}+&\frac{c_g^+(0) i^{k}}{k-s}+\frac{c^-_f(0)}{N^{\frac{1-k}{2}}}\frac{1}{s-k+1}+\frac{c_g^-(0)i^k}{N^{\frac{1-k}{2}}}\frac{1}{1-s}\\&=\int_{1}^{\infty} \left(f\left(\frac{i t}{\sqrt{N}}\right)-c_f^+(0)-\frac{c^-_f(0)}{N^{\frac{1-k}{2}}}t^{1-k}\right) t^{s-1} d t
\\&+i^{k} \int_{1}^{\infty}\left(g\left(\frac{i t}{\sqrt{N}}\right)-c_g^+(0)-\frac{c^-_g(0)}{N^{\frac{1-k}{2}}}t^{1-k}\right) t^{k-s-1} d t
\end{split}
\end{equation}
Since $g|_k\ {\omega(N)} = (-1)^k f$, we have 
\begin{equation}\label{eq:lambdag}
\begin{split}
\Lambda_N^*(g,s)=\Lambda_N(g,s)+\frac{c_g^+(0)}{s}+&\frac{c_f^+(0) i^{-k}}{k-s}+\frac{c^-_g(0)}{N^{\frac{1-k}{2}}}\frac{1}{s-k+1}+\frac{c_f^-(0)i^{-k}}{N^{\frac{1-k}{2}}}\frac{1}{1-s}\\&=\int_{1}^{\infty} \left(g\left(\frac{i t}{\sqrt{N}}\right)-c_g^+(0)-\frac{c^-_g(0)}{N^{\frac{1-k}{2}}}t^{1-k}\right) t^{s-1} d t
\\&+i^{-k} \int_{1}^{\infty}\left(f\left(\frac{i t}{\sqrt{N}}\right)-c_f^+(0)-\frac{c^-_f(0)}{N^{\frac{1-k}{2}}}t^{1-k}\right) t^{k-s-1} d t
\end{split}
\end{equation}
Now by using Lemma \ref{lemma 4.7} we see that the integral representations of  
$\Lambda^*_N(f,s)$ and $\Lambda^*_N(g,s)$ define analytic functions and are bounded in any vertical strip in the whole complex plane. Replacing $s$ with $k-s$ in the integral expression \eqref{eq:lambdag} of $\Lambda_N(g,s)$ and then comparing with \eqref{eq:lambdaf} we obtain
$$
\Lambda_N(f,s) = i^k \Lambda_N(g, k-s).
$$ 
Next, we have 
\[
\frac{\partial f}{\partial u}(\tau)=2\pi i \left( \sum_{n=1}^{\infty}nc_f^+(n)q^n+\sum_{n<0}nc^-_f(n)\Gamma(1-k,-4\pi nv)q^n \right)
\]
\[
\frac{\partial g}{\partial u}(\tau)=2\pi i \left(\sum_{n=1}^{\infty}nc_g^+(n)q^n+\sum_{n<0}nc^-_g(n)\Gamma(1-k,-4\pi nv)q^n\right).
\]
For any $n \geq 1$, we have
\[
\begin{split}
2\pi i n c_f^-(-n)\int_{0}^{\infty}\Gamma(1-k,4\pi nt/\sqrt{N})&e^{2\pi nt/\sqrt{N}}t^{s+1}\frac{dt}{t}\\&=2\pi i n c_f^-(-n) \left(\frac{2\pi n}{\sqrt{N}}\right)^{-s-1}\int_{0}^{\infty}\Gamma(1-k,2x)e^{x}x^{s+1}\frac{dx}{x}\\&=W_{1-k}(s+1) i\sqrt{N}\left(\frac{2\pi}{\sqrt{N}}\right)^{-s}\frac{c_f^-(-n)}{n^s}.
\end{split}
\]
\[
2\pi i n c_f^+(n)\int_0^{\infty}e^{-2\pi nt/\sqrt{N}}t^{s+1}\frac{dt}{t}=\Gamma(s+1)
i\sqrt{N}\left(\frac{2\pi}{\sqrt{N}}\right)^{-s}\frac{c_f^+(n)}{n^s}.
\]
So we get 
\[
\begin{split}
\int_{0}^{\infty}\left(H\left(\frac{it}{\sqrt{N}}\right)-kc_f^+(0)-k\frac{c^-_f(0)}{N^{\frac{1-k}{2}}}t^{1-k}\right)t^{s}\frac{dt}{t}=\frac{2i}{\sqrt{N}}\int_{0}^{\infty}\frac{\partial f}{\partial u}\left(\frac{it}{\sqrt{N}}\right)t^{s+1}\frac{dt}{t}\\+k\int_0^{\infty}\left(f\left(\frac{it}{\sqrt{N}}\right)-c^+_f(0)-\frac{c^-_f(0)}{N^{\frac{1-k}{2}}}t^{1-k}\right)t^s\frac{dt}{t}\\=-2~\Xi_N(f,s)+k\Lambda_N(f,s)=\Omega_N(f,s).
\end{split}
\]
By proceeding similar to the case of $\Lambda_N(f,s), \Lambda(g,s)$ and by using Lemma \ref{lemma 4.6}, we have
\[
\begin{split}
\Omega^*_N(f,s)=\Omega_N(f,s)+k\Bigg(\frac{c_f^+(0)}{s}-\frac{c_g^+(0) i^{k}}{k-s}&+\frac{c^-_f(0)}{N^{\frac{1-k}{2}}}\frac{1}{s-k+1}-\frac{c_g^-(0)i^k}{N^{\frac{1-k}{2}}}\frac{1}{1-s}\Bigg)\\&=\int_{1}^{\infty} \left(H\left(\frac{i t}{\sqrt{N}}\right)-kc_f^+(0) -k\frac{c^-_f(0)}{N^{\frac{1-k}{2}}}t^{1-k}\right) t^{s-1} d t \\
&-i^{k} \int_{1}^{\infty}\left(I\left(\frac{i t}{\sqrt{N}}\right)-k c_g^+(0)-k\frac{c^-_g(0)}{N^{\frac{1-k}{2}}}t^{1-k}\right) t^{k-s-1} d t.
\end{split}
\]
\[
\begin{split}
\Omega_N^*(g,s)=\Omega_N(g,s)&+k\Bigg(\frac{c_g^+(0)}{s}-\frac{c_f^+(0) i^{-k}}{k-s}+\frac{c^-_g(0)}{N^{\frac{1-k}{2}}}\frac{1}{s-k+1}-\frac{c_f^-(0)i^{-k}}{N^{\frac{1-k}{2}}}\frac{1}{1-s}\Bigg)\\&=\int_{1}^{\infty} \left(I\left(\frac{i t}{\sqrt{N}}\right)-k c_g^+(0)-k\frac{c^-_g(0)}{N^{\frac{1-k}{2}}}t^{1-k}\right) t^{s-1} d t \\
&-i^{-k} \int_{1}^{\infty}\left(H\left(\frac{i t}{\sqrt{N}}\right)-kc_f^+(0)-k\frac{c^-_f(0)}{N^{\frac{1-k}{2}}}t^{1-k}\right) t^{k-s-1} d t.
\end{split}
\]
Again by using Lemma \ref{lemma 4.7} we see that the right hand side integral representations of
$\Omega^*_N(f,s)$ and $\Omega^*_N(g,s)$ are analytic and bounded in any vertical strip in the whole complex plane.
By replacing $s$ with $k-s$ in the integral expression of $\Omega_N(g,s)$ and then comparing with $\Omega_N(f,s)$, we have
$$
\Omega_N(f,s) = -i^{k} \Omega_N(g,k-s).
$$
\\
$(2)\implies (1).$ 
By using inverse Mellin transform of $\Gamma(s)$ and Lemma \ref{lemma w},
for any $t > 0$ and $\beta_1 > \alpha + 1$, we have 
\begin{equation*}
\begin{split}
\frac{1}{2\pi i}&\int\limits_{\beta_1-i\infty}^{\beta_1+i\infty}t^{-s}\Lambda_N(f,s)ds\\&=\frac{1}{2\pi i}\left[\int\limits_{\beta_1-i\infty}^{\beta_1+i\infty}\left(\frac{2\pi t}{\sqrt{N}}\right)^{-s}\Gamma(s)L^+(f,s)ds+\int\limits_{\beta_1-i\infty}^{\beta_1+i\infty}\left(\frac{2\pi t}{\sqrt{N}}\right)^{-s}W_{1-k}(s)L^-(f,s)ds\right]\\&=\sum_{n=1}^{\infty}\left[\frac{1}{2\pi i}\int\limits_{\beta_1-i\infty}^{\beta_1+i\infty}c_f^+(n)\left(\frac{2\pi nt}{\sqrt{N}}\right)^{-s}\Gamma(s)ds+\frac{1}{2\pi i}\int\limits_{\beta_1-i\infty}^{\beta_1+i\infty}c_f^-(-n)\left(\frac{2\pi nt}{\sqrt{N}}\right)^{-s}W_{1-k}(s)ds\right]\\&=\sum_{n=1}^{\infty}\left[c^+_f(n)e^{-2\pi nt/\sqrt{N}}+c_f^-(-n)\Gamma(1-k,4\pi nt/\sqrt{N})e^{2\pi nt/\sqrt{N}}\right]\\&=f\left(\frac{it}{\sqrt{N}}\right)-c_f^+(0)-\frac{c^-_f(0)}{N^{\frac{1-k}{2}}}t^{1-k}.
\end{split}
\end{equation*}
Since $L^+(f,s)$ and $L^-(f,s)$ are bounded on $\text{Re}(s)=\beta_1$, 
by using Stirling's estimate for $\Gamma(s)$ and Lemma \ref{lemma:decay} for $W_{1-k}(s)$,
for any $\mu > 0$ we have
$$
|\Lambda_N(f,s)| = O(|\text{Im}(s)|^{-\mu}), \;\text{as} \ |\text{Im}(s)| \rightarrow \infty,
$$
on $\text{Re}(s) = \beta_1$. Now choose any real number $\beta_2$ such that $k-\beta_2 > \alpha+1$. Then by using the functional equation for $\Lambda_N(f,s)$ and by following the arguments similar to above for 
$\Lambda_N(g,s)$, for any $\mu > 0$ we have
$$
|\Lambda(f,s)| =  |\Lambda_N(g,k-s)| = O(|\text{Im}(s)|^{-\mu}), \;\text{as} \ |\text{Im}(s)| 
\rightarrow \infty,
$$
on $\text{Re}(s)=\beta_2$. Since by assumption the function
$\Lambda^*_N(f,s)=\Lambda_N(f,s)+\frac{c_f^+(0)}{s}+\frac{c_g^+(0) i^{k}}{k-s}+\frac{c^-_f(0)}{N^{\frac{1-k}{2}}}\frac{1}{s-k+1}+\frac{c_g^-(0)i^k}{N^{\frac{1-k}{2}}}\frac{1}{1-s}$ 
is holomorphic and bounded on the vertical strip $\beta_2 \leq \text{Re}(s) \leq \beta_1$,
by applying Phragm\'en--Lindel\"of theorem to $\Lambda_N^*(f,s)$, we get 
$$
|\Lambda_N^*(f,s)| = |\Lambda_N(f,s)+\frac{c_f^+(0)}{s}+\frac{i^{k}c_g^+(0)}{k-s}+\frac{c^-_f(0)}{N^{\frac{1-k}{2}}}\frac{1}{s-k+1}+\frac{i^kc_g^-(0)}{N^{\frac{1-k}{2}}}\frac{1}{1-s}|= O(|\text{Im}(s)|^{-\mu})
$$
for any $0< \mu \leq 1$ as $|\text{Im}(s)| \rightarrow \infty$, on the domain $\beta_2 \leq \text{Re}(s) \leq \beta_1$. From this, we conclude that $\Lambda(f, s)=  O(|\text{Im}(s)|^{-\mu})$, for $0<\mu\leq 1$ as 
$|\text{Im}(s)| \rightarrow \infty$. Without loss of generality we assume that $\beta_2 < k-1$. The function 
$t^{-s}\Lambda(f,s)$ has possibly simple poles at $s=0, k, 1, k-1$ with the residues $-c_f^+(0), 
i^{k}c_g^+(0)t^{-k},\frac{i^kc_g^-(0)}{N^{\frac{1-k}{2}}}t^{-1}, -\frac{c_f^-(0)}{N^{\frac{1-k}{2}}}t^{-k+1}$ respectively. Now consider a rectangular contour with vertical sides  
Re$(s)=\beta_1, \beta_2$ and horizontal sides Im$(s)=T, -T$. The integral of the function 
$t^{-s}\Lambda_N(f,s)$ along the horizontal lines $|{\rm Im}(s)| =T$ will go to $0$ as $T\rightarrow \infty$.
This will allow us to change the integration path from $\text{Re}(s)=\beta_1$ to $\text{Re}(s)=\beta_2$ to get
\[
f\left(\frac{it}{\sqrt{N}}\right)=\frac{1}{2 \pi i} \int_{\beta_2-i \infty}^{\beta_2+i \infty} \Lambda_N(f,s) t^{-s} d s+i^{k} c_g^+(0) t^{-k}+i^k\frac{c_g^-(0)}{N^{\frac{1-k}{2}}}t^{-1}.
\]
By using functional equation of $\Lambda_N(f,s)$, we get 
\[
\begin{split}
f\left(\frac{it}{\sqrt{N}}\right)&=\frac{1}{2 \pi i} \int_{\beta_2-i \infty}^{\beta_2+i \infty}i^k \Lambda_N(g,k-s) t^{-s} d s+i^{k} c_g^+(0) t^{-k}+i^k\frac{c_g^-(0)}{N^{\frac{1-k}{2}}}t^{-1}\\
&=\frac{i^kt^{-k}}{2 \pi i} \int_{k-\beta_2 - i \infty}^{k-\beta_2+i \infty} \Lambda_N(g,s) t^{s} d s+i^{k} c_g^+(0) t^{-k}+i^k\frac{c_g^-(0)}{N^{\frac{1-k}{2}}}t^{-1}\\ 
&=i^kt^{-k}g\left(\frac{i}{\sqrt{N}t}\right).
\end{split}
\] 
Since the above equality is true for any $t>0$, by replacing $t$ by $1/\sqrt{N}t$ we get 
\begin{equation}\label{eq:ftrans}
f|_k \omega(N) (it)= g(it). 
\end{equation}

By following the calculations done in the beginning, for any $t > 0$ and $\beta_1 > \alpha + 1$ we have 
\begin{equation*}
\begin{split}
\frac{1}{2\pi i}\int\limits_{\beta_1-i\infty}^{\beta_1+i\infty}t^{-s}\Xi_N(f,s)ds&=\frac{1}{2\pi i}\Bigg{[}\int\limits_{\beta_1-i\infty}^{\beta_1+i\infty}\left(\frac{2\pi t}{\sqrt{N}}\right)^{-s}\Gamma(s+1)L^+(f,s)ds\\
&\hspace{3cm}-\int\limits_{\beta_1-i\infty}^{\beta_1+i\infty}\left(\frac{2\pi t}{\sqrt{N}}\right)^{-s}W_{1-k}(s+1)L^-(f,s)ds\Bigg{]}\\
&=\sum_{n=1}^{\infty}\Bigg{[}\frac{1}{2\pi i}\int\limits_{\beta_1-i\infty}^{\beta_1+i\infty}c_f^+(n)\left(\frac{2\pi nt}{\sqrt{N}}\right)\left(\frac{2\pi nt}{\sqrt{N}}\right)^{-s-1}\Gamma(s+1)ds\\
&\hspace{1.5cm}-\frac{1}{2\pi i}\int\limits_{\beta_1-i\infty}^{\beta_1+i\infty}c_f^-(-n)\left(\frac{2\pi nt}{\sqrt{N}}\right)\left(\frac{2\pi nt}{\sqrt{N}}\right)^{-s-1}W_{1-k}(s+1)ds\Bigg{]}\\
&=\frac{t}{i\sqrt{N}}\sum_{n=1}^{\infty}2\pi i n \left[c^+_f(n)e^{-2\pi nt/\sqrt{N}}-c_f^-(-n)\Gamma(k-1,4\pi nt/\sqrt{N})e^{2\pi nt/\sqrt{N}}\right]\\
&=\frac{t}{i\sqrt{N}}\frac{\partial f}{\partial u}\left(\frac{it}{\sqrt{N}}\right).
\end{split}
\end{equation*}
Therefore we have
\[
\frac{1}{2\pi i}\int\limits_{\beta_1-i\infty}^{\beta_1+i\infty}t^{-s}\Omega_N(f,s)ds=-\frac{2t}{i\sqrt{N}}\frac{\partial f}{\partial u}\left(\frac{it}{\sqrt{N}}\right)+kf\left(\frac{it}{\sqrt{N}}\right)-kc_f^+(0)-k\frac{c^-_f(0)}{N^{\frac{1-k}{2}}}t^{1-k}.
\]
By following all the above calculations done in $\Lambda_N(f,s)$ case and by using the functional equation for 
$\Omega_N(f,s)$ we get 
\[
\frac{2i}{N t}\frac{\partial f}{\partial u}\left(\frac{-1}{N (it)}\right)+kf\left(\frac{-1}{N(it)}\right)=-N^{k/2}i^kt^k\left[2it\frac{\partial g}{\partial u}\left(it\right)+kg\left(it\right)\right],
\]
that is, 
\begin{equation}\label{eq:Htrans}
H|_k \omega (N) (it) = -I(it).
\end{equation}

Since $\Delta_k = -\xi_{2-k}\xi_k$, by applying $\Delta_k$ on the given Fourier series expansion of $f$ and $g$ we get that $\Delta_k(f)=0=\Delta_k(g)$. Put $G:=f|_k\omega(N)-g$. Then we have 
\[
\Delta_k(G)= 0.
\]
From the theory of differential equations, it is well known (cf. \cite[Page 57]{FJ}) that solutions of elliptic partial differential equations with real analytic coefficients, such as $\Delta_k(F) = 0$, are real analytic. So $G(\tau)$ is real analytic and we write 
\[
G(\tau)=G(u+iv)=\sum_{n=0}^{\infty}G_n(v)u^n.
\]
Since $G(\tau)$ satisfies $\Delta_k(G) = 0$, we have
\[
-v^2((n+2)(n+1)G_{n+2}(v)+G_n^{''} (v))+i k v((n+1)G_{n+1}(v)+iG_n^{'}(v)) = 0, \ {\rm for}\ n \geq 0.
\]
Therefore we get the following recursive formula for $G_n$.
\begin{equation}\label{eq:Grecursive}
G_{n+2}(v)=\frac{ikv(n+1)G_{n+1}(v)-kvG_n'(v)-v^2G_n''(v)}{v^2(n+1)(n+2)}, \ {\rm for}\ n \geq 0.
\end{equation}
Also, we have
\[
\begin{split}
G_0(t)=G(it)=N^{-k/2}i^{-k}t^{-k}f\left(-\frac{1}{Nit}\right)-g(it),\\
G_1(t)=\frac{\partial G}{\partial u}(it)=\frac{\partial }{\partial u}\left.\left(N^{-k/2}\tau^{-k}f\left(\frac{-1}{N\tau}\right)\right)\right|_{\tau=it}-\frac{\partial g}{\partial u}(it).
\end{split}
\]
By using \eqref{eq:ftrans} we get $G_0(t)=0$. Next, by following the steps in the proof of Lemma \ref{lemma 4.6}
and by using \eqref{eq:Htrans} we get $G_1(t)=0$. 
Therefore by using \eqref{eq:Grecursive} we have $G_n(v)=0$ for all $n\geq 0$ and hence $G(\tau)=0$, which implies that $f|_k\omega(N)(\tau)=g(\tau)$. 
\end{proof}
\end{thm}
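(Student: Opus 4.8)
The plan is to build the bridge between the automorphic transformation law and the functional equations of the completed Dirichlet series by means of the Mellin transform, following the classical Hecke argument but adapting it to the simultaneous presence of a holomorphic and a non-holomorphic part. For the direction $(1)\Rightarrow(2)$, I would first compute, for $\mathrm{Re}(s)$ sufficiently large, the Mellin transform of $f(it/\sqrt N)$ with the two non-decaying terms $c_f^+(0)$ and $c_f^-(0)N^{-(1-k)/2}t^{1-k}$ removed. Integrating term by term, the holomorphic part contributes $(2\pi n/\sqrt N)^{-s}\Gamma(s)$ and the incomplete-gamma part contributes $(2\pi/\sqrt N)^{-s}W_{1-k}(s)$ via the definition \eqref{eq:wnu}, so that the transform is exactly $\Lambda_N(f,s)$. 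Splitting $\int_0^\infty=\int_0^1+\int_1^\infty$ and feeding the relation $g=f|_k\omega(N)$ into the $\int_0^1$ piece (after $t\mapsto 1/t$) yields a symmetric integral representation in $f$ and $g$ which, by the growth estimates of Lemma \ref{lemma 4.7}, is entire and bounded in vertical strips; reading off its symmetry gives $\Lambda_N(f,s)=i^{k}\Lambda_N(g,k-s)$, the removed terms producing the correction $\Lambda_N^*$.

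For the second functional equation I would introduce the auxiliary function $H(\tau)=2iv\,\partial_u f+kf$ of Lemma \ref{lemma 4.6}. Its Mellin transform along the imaginary axis differs from that of $f$ by an extra factor $n$, shifting the relevant gamma factors to $\Gamma(s+1)$ and $W_{1-k}(s+1)$, which is precisely the combination entering $\Xi_N$ and hence $\Omega_N=-2\Xi_N+k\Lambda_N$. The transformation law $H|_k\omega(N)(it)=-I(it)$ supplied by Lemma \ref{lemma 4.6} now plays the role that $g=f|_k\omega(N)$ played before, and the same splitting argument delivers $\Omega_N(f,s)=-i^{k}\Omega_N(g,k-s)$ together with an entire, vertically bounded remainder $\Omega_N^*$.

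For $(2)\Rightarrow(1)$ I would invert the Mellin transform. Stirling's estimate for $\Gamma(s)$ and the decay bound of Lemma \ref{lemma:decay} for $W_{1-k}(s)$ force rapid decay of $\Lambda_N(f,s)$ in vertical strips on the right, and the functional equation transports this decay to the left; applying Phragm\'en--Lindel\"of to the entire, bounded function $\Lambda_N^*$ then yields decay throughout an intermediate strip. This legitimizes shifting the contour from $\mathrm{Re}(s)=\beta_1$ to some $\beta_2$ with $k-\beta_2>\alpha+1$, crossing simple poles at $s=0,k,1,k-1$ whose residues restore the removed terms. Substituting the functional equation into the shifted integral identifies it with $g$, giving $f|_k\omega(N)(it)=g(it)$ on the imaginary axis, and the identical argument for $\Omega_N$ produces $H|_k\omega(N)(it)=-I(it)$.

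The genuine difficulty, and the point where this proof departs from the holomorphic theory, is that these two identities hold only along the imaginary axis, whereas statement (1) is an equality of functions on all of $\mathbb{H}$. Here I would invoke harmonicity: applying $\Delta_k$ to the Fourier expansions shows $\Delta_k f=\Delta_k g=0$, so $G:=f|_k\omega(N)-g$ satisfies $\Delta_k G=0$ and is therefore real analytic. Expanding $G(u+iv)=\sum_{n\ge 0}G_n(v)u^n$ and inserting this into $\Delta_k G=0$ gives a second-order recursion determining $G_{n+2}$ from $G_{n+1}$ and $G_n$; the two axis identities furnish exactly the initial data $G_0\equiv 0$ and $G_1\equiv 0$, so the recursion forces every $G_n\equiv 0$ and hence $G\equiv 0$. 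I expect this last step to be the main obstacle, since it is precisely the reason a single Dirichlet series does not suffice: unlike the holomorphic case, one functional equation only pins down the transformation on the axis, and the second series $\Omega_N$ is needed to supply the first-order datum $G_1$ that, together with the ellipticity of $\Delta_k$, rigidifies the identity off the axis.
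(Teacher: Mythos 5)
Your proposal is correct and follows essentially the same route as the paper's own proof: the Mellin transform of $f$ (with the non-decaying terms removed) and of the auxiliary function $H=2iv\,\partial_u f+kf$, the $\int_0^1+\int_1^\infty$ splitting combined with Lemmas \ref{lemma 4.7} and \ref{lemma 4.6} for the direct part, and for the converse the Phragm\'en--Lindel\"of contour shift recovering the two identities on the imaginary axis, followed by the power-series-in-$u$ expansion of $G=f|_k\omega(N)-g$ whose recursion under $\Delta_k G=0$ is rigidified by the initial data $G_0=G_1=0$. Your closing remark correctly identifies the role of the second functional equation as supplying the first-order datum $G_1$, which is exactly how the paper's argument is organized.
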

 
\section{Proof of Theorem \ref{thm:main}}
%Weil's Converse Theorem for Harmonic Maass Forms}

Throughout this section, let $k$ be a negative integer and $N$ be a positive integer. 
Let $f$ and $g$ be two functions defined on 
$\mathbb{H}$ given by the formal Fourier series \eqref{eq:fgfourier}
with $c_f^{\pm}(n), c_g^{\pm}(n)$ bounded by $O\left(|n|^{\alpha}\right), n \in \mathbb{Z}$, for some 
$\alpha \geq 0$. Let $\psi$ be a non-trivial primitive Dirichlet character with conductor $m_\psi > 1$ 
and $f_\psi$ be the twisted Fourier series 
\eqref{eq:ftwistedfourier} of $f$ by $\psi$. We have 
%Similarly, we have the twisted Fourier series $g_\psi(\tau)$.
%the function $f(\tau)$ given by the Fourier series expansion \eqref{eq:fgfourier} by the Dirichlet character $\psi$ %to get  
%\begin{equation}
%f_{\psi}(\tau)=\sum_{n=0}^{\infty}\psi(n)c_f^+(n)q^n+\sum_{n<0}\psi(n)c^-_f(n)\Gamma(1-k,-4\pi nv)q^n
%\label{5.1}
%\end{equation}
the following twisted Dirichlet series attached to $f$.%attached $L$-function of $f$ by 
\begin{equation*}
\begin{split}
L^+(f,s,\psi)=L^+(f_\psi, s)=\sum_{n=1}^{\infty}\frac{\psi(n)c^+_f(n)}{n^s},~~~ L^-(f,s,\psi)=L^-(f_\psi, s)=\sum_{n=1}^{\infty}\frac{\psi(n)c^-_f(-n)}{n^s}.\\
%\label{5.2}
%\end{equation}
%and %Let $m=m_{\psi}$ be the conductor of $\psi$, and put
%\begin{equation}
%\begin{split}
\Lambda_{N}(f,s, \psi)=\Lambda_N(f_\psi, s)=\left(\frac{m_\psi\sqrt{N}}{2 \pi}\right)^{s} \left[\Gamma(s) L^+(f,s,\psi)+W_{1-k}(s)L^-(f,s,\psi)\right].\\
\Xi_N(f,s,\psi)=\Xi_n(f_\psi,s)=\left(\frac{m_\psi\sqrt{N}}{2 \pi}\right)^{s} \left[\Gamma(s+1) L^+(f,s,\psi)-W_{1-k}(s+1)L^-(f,s,\psi)\right].\\
\Omega_N(f,s,\psi)=\Omega_N(f_\psi,s)=-2~\Xi_N(f,s,\psi)+k\Lambda_N(f,s,\psi).
\end{split}
\label{5.3}
\end{equation*}
Similarly, we have the twisted Fourier series $g_\psi$ and we attach the Dirichlet series 
$\Lambda_N(g, s, \psi)$, $\Xi_N(g, s, \psi)$ and $\Omega_N(g,s,\psi)$ to $g$.
%Then we have the following equality. 
%\begin{equation}
%\begin{split}
%L^{\pm}\left(f_{\psi},s\right)=L^{\pm}(f,s,\psi), 
%\Lambda_{N m^{2}}\left(f_{\psi},s\right)=\Lambda_{N}(f,s,\psi),
%\Omega_{Nm^2}(f_{\psi},s)=\Omega_N(f,s,\psi).
%\end{split}
%\label{5.4}
%\end{equation}
\begin{prop}
%Let $\psi$ be a non-trivial primitive Dirichlet character of conductor $m_\psi (>1)$.
Let all the notations be as above. Let $C_\psi$ be a constant which may depend on $\psi$. 
Then the following two statements are equivalent.
\begin{enumerate}
\item $f_{\psi}|_k \omega(N m_\psi^{2})={C_\psi} g_{\bar{\psi}}.$
\item $\Lambda_{N}(f,s,\psi),\Lambda_{N}(g,s,\bar{\psi})$  and $\Omega_{N}(f,s,{\psi}),\Omega_{N}(g,s,\bar{\psi})$  
can be analytically continued to the whole s-plane, bounded on any vertical strip, and satisfy the following functional equations
\begin{equation*}
\begin{split}
\Lambda_{N}(f,s,\psi)=i^{k} C_{\psi} \Lambda_{N}(g,k-s ,\bar{\psi}),\\
\Omega_{N}(f,s,\psi)=-i^{k} C_{\psi} \Omega_{N}(g,k-s ,\bar{\psi}).
\end{split}
\label{5.5}
\end{equation*}
\label{prop 5.1(2)}
\end{enumerate}
\label{prop 5.1}
\begin{proof}
This proposition follows as an application of Theorem \ref{thm 4.8} by putting $f=f_{\psi}, g=C_{\psi}g_{\overline{\psi}}$ and $N=Nm_\psi^2$ in the theorem.
\end{proof}
\end{prop}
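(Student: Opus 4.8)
The plan is to obtain this proposition as a direct specialization of the untwisted converse theorem, Theorem~\ref{thm 4.8}, applied to the twisted data at an inflated level. Precisely, I would invoke Theorem~\ref{thm 4.8} with $f$ replaced by $f_\psi$, with $g$ replaced by $C_\psi\,g_{\bar\psi}$, with the level $N$ replaced by $Nm_\psi^2$, and with the same negative integer $k$. The first thing to verify is that this new pair satisfies the hypotheses of Theorem~\ref{thm 4.8}. Both $f_\psi$ and $C_\psi g_{\bar\psi}$ are given by formal Fourier series of the shape \eqref{eq:fgfourier}, with coefficients $\psi(n)c_f^{\pm}(\pm n)$ and $C_\psi\overline{\psi}(n)c_g^{\pm}(\pm n)$ respectively; these are again $O(|n|^{\alpha})$ because $|\psi(n)|\le 1$. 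Since $\psi$ is a nontrivial primitive character, $\psi(0)=\overline{\psi}(0)=0$, so by \eqref{eq:ftwistedfourier} the constant terms and the $v^{1-k}$-terms of both $f_\psi$ and $g_{\bar\psi}$ vanish; that is, their zeroth coefficients $c^{+}(0)$ and $c^{-}(0)$ are all zero.

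The second step is to identify the completed Dirichlet series on the two sides. Because $m_{\bar\psi}=m_\psi$, the normalizing factor $(m_\psi\sqrt{N}/2\pi)^{s}$ in the definition of $\Lambda_N(f,s,\psi)$ equals $(\sqrt{Nm_\psi^2}/2\pi)^{s}$, so $\Lambda_{Nm_\psi^2}(f_\psi,s)=\Lambda_N(f,s,\psi)$; and since $L^{\pm}$ depends linearly on the underlying coefficients, $\Lambda_{Nm_\psi^2}(C_\psi g_{\bar\psi},s)=C_\psi\,\Lambda_N(g,s,\bar\psi)$, with the analogous identities for $\Xi$ and hence for $\Omega$. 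Substituting these identities into the functional equations supplied by Theorem~\ref{thm 4.8} for the inflated level, $\Lambda_{Nm_\psi^2}(f_\psi,s)=i^{k}\Lambda_{Nm_\psi^2}(C_\psi g_{\bar\psi},k-s)$ and the corresponding equation for $\Omega$, reproduces the twisted functional equations of statement~(2).

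The third step is to reconcile the analytic assertions, and this is the only point that requires any care. Theorem~\ref{thm 4.8} delivers condition (2)(a) of Theorem~\ref{thm:main}, which speaks of the \emph{starred} functions $\Lambda^{*},\Omega^{*}$ formed by adding correction terms built from the zeroth coefficients $c^{\pm}(0)$ of the two forms. By the vanishing noted in the first step, every such correction term is identically zero, so $\Lambda^{*}=\Lambda$ and $\Omega^{*}=\Omega$ for the twisted objects. The meromorphic continuation with prescribed simple poles of the general theorem therefore collapses to an honest entire continuation, and the assertion that $\Lambda^{*},\Omega^{*}$ are entire and bounded in vertical strips becomes exactly the assertion that $\Lambda,\Omega$ are analytically continued to all of $\mathbb{C}$ and bounded in vertical strips, which is the analytic content of statement~(2). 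Finally, statement~(1) of Theorem~\ref{thm 4.8} for the inflated data reads $C_\psi g_{\bar\psi}=f_\psi|_k\omega(Nm_\psi^2)$, which is precisely statement~(1) of the proposition. I expect no genuine obstacle: the substance is entirely inherited from Theorem~\ref{thm 4.8}, and the proof amounts to the bookkeeping above, the key subtlety being that $\psi(0)=0$ is what makes the starred and unstarred completed series coincide, thereby converting the general theorem's pole-bearing conclusion into the cleaner entire conclusion stated here.
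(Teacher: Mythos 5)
Your proposal is correct and is exactly the paper's argument: the paper's proof consists of the single sentence that the proposition follows from Theorem \ref{thm 4.8} with $f=f_\psi$, $g=C_\psi g_{\overline{\psi}}$, and $N$ replaced by $Nm_\psi^2$. Your additional bookkeeping --- in particular the observation that $\psi(0)=0$ kills the constant and $v^{1-k}$ terms so that the starred and unstarred completed series coincide and the pole corrections disappear --- is a correct and welcome filling-in of details the paper leaves implicit.
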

\begin{lemma}
Let $f$ and $\psi$ be as above.
For any $r\in\mathbb{R}$, let 
\[
T^r=\begin{pmatrix}
1&r\\0&1
\end{pmatrix}.
\]
Then we have
\[
f_{\psi}=\tau(\overline{\psi})^{-1} \sum_{u=1}^{m_\psi} \overline{\psi}(u)\left(\left.f\right|_k T^{u / m_\psi}\right),
\]
where $\tau(\overline{\psi}) = \sum_{a=1}^{m_\psi} \overline{\psi}(a) e^{2 \pi i a/m}$ is the Gauss sum of 
$\overline{\psi}$.
\label{lemma 5.2}
\begin{proof}
Put $m = m_\psi$. We have 
\[
\left(\left.f\right|_k T^{u / m}\right)(\tau)=\sum_{n=0}^{\infty}e^{2\pi inu/m}c_f^+(n)q^n+c_f^-(0)v^{1-k}+\sum_{n<0}e^{2\pi i n u / m}c^-_f(n)\Gamma(1-k,-4\pi n v) q^n.
\]
From \cite[Lemma 3.1.1 (1)]{miyake} we have
\[
\sum_{a=1}^m \overline{\psi}(a) e^{2 \pi i a n/m} = \psi(n) \tau(\overline{\psi}),
\] 
for any integer $n$. By using this fact we get
\[
\begin{split}
\sum_{u=1}^{m} \overline{\psi}(u)\left(\left.f\right|_k T^{u / m}\right)(\tau)&=\sum_{n=0}^{\infty}\left(\sum_{u=1}^{m} \overline{\psi}(u)e^{2\pi inu/m}\right)c_f^+(n)q^n+c_f^-(0)v^{1-k}\sum_{u=1}^{m} \overline{\psi}(u)\\
&\hspace{1cm}+\sum_{n<0}\left(\sum_{u=1}^{m} \overline{\psi}(u)e^{2\pi inu/m}\right)c^-_f(n)\Gamma(1-k,-4\pi nv)q^n\\
&=\tau(\overline{\psi})\Bigg{[}\sum_{n=0}^{\infty}\psi(n)c_f^+(n)q^n+\sum_{n<0}\psi(n)c^-_f(n)\Gamma(1-k,-4\pi nv)q^n\Bigg{]}\\&=\tau(\overline{\psi})f_{\psi}(\tau).
\end{split}
\]
\end{proof}
\end{lemma}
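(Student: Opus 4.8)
The plan is to compute the weighted average $\sum_{u=1}^{m_\psi} \overline{\psi}(u)\bigl(f|_k T^{u/m_\psi}\bigr)$ directly from the Fourier expansion of $f$ and to recognize it as $\tau(\overline{\psi})$ times the twisted series $f_\psi$. Writing $m = m_\psi$ for brevity, I would first note that $T^{u/m} = \bigl(\begin{smallmatrix} 1 & u/m \\ 0 & 1 \end{smallmatrix}\bigr)$ has determinant $1$ and vanishing lower-left entry, so its slash action is just the translation $\bigl(f|_k T^{u/m}\bigr)(\tau) = f(\tau + u/m)$. In the Fourier series \eqref{eq:fgfourier} this replaces each $q^n = e^{2\pi i n \tau}$ by $e^{2\pi i n u/m} q^n$, while the term $c_f^-(0) v^{1-k}$ is untouched since it depends only on the imaginary part $v$, which the real shift does not affect.

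The key input is the Gauss-sum orthogonality relation $\sum_{u=1}^{m} \overline{\psi}(u)\, e^{2\pi i n u/m} = \psi(n)\,\tau(\overline{\psi})$, valid for every integer $n$ because $\psi$ is primitive; I would quote this from \cite[Lemma 3.1.1]{miyake}. Summing the translated expansions against $\overline{\psi}(u)$ and applying this relation coefficient by coefficient, each contribution $e^{2\pi i n u/m} c_f^{\pm}(n)$ collapses to $\psi(n)\,\tau(\overline{\psi})\, c_f^{\pm}(n)$. The single exception is the $c_f^-(0) v^{1-k}$ term, whose coefficient is $\sum_{u=1}^m \overline{\psi}(u) = 0$ precisely because $\psi$ is non-trivial; this explains why no $v^{1-k}$ term survives in $f_\psi$.

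Collecting the remaining terms yields $\sum_{u=1}^{m} \overline{\psi}(u)\bigl(f|_k T^{u/m}\bigr)(\tau) = \tau(\overline{\psi})\, f_\psi(\tau)$, and dividing by the Gauss sum $\tau(\overline{\psi})$, which is nonzero since $\psi$ is primitive, gives the asserted identity. I do not anticipate any genuine obstacle: once the translation interpretation of the slash operator and the Gauss-sum relation are in hand, the proof is term-by-term bookkeeping over the three pieces of the expansion. The only point deserving a moment's attention is the index convention for the non-holomorphic coefficients $c_f^-(n)$ with $n < 0$, where the orthogonality relation attaches the character value $\psi(n)$.
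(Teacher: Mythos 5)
Your proposal is correct and follows essentially the same route as the paper: interpret $f|_k T^{u/m}$ as the translation $\tau \mapsto \tau + u/m$, apply the Gauss-sum identity $\sum_{u=1}^{m} \overline{\psi}(u)e^{2\pi i n u/m} = \psi(n)\tau(\overline{\psi})$ term by term, and note that the $c_f^-(0)v^{1-k}$ term is annihilated because $\psi$ is non-trivial. Your closing caveat about the index convention for $n<0$ is apt: the orthogonality relation attaches $\psi(n)$ there, while the definition of $f_\psi$ in \eqref{eq:ftwistedfourier} carries $\psi(-n)$, and the paper's own final display likewise writes $\psi(n)$, so the two conventions differ by the harmless but nonzero factor $\psi(-1)$.
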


\subsection{Proof of Theorem \ref{thm:main}: $(1) \implies (2)$}

\begin{prop}\label{prop:fpsitrans}
Let $\chi$ be a Dirichlet character modulo $N$ with conductor $m_\chi$ and  $f\in H^{\#}_k(N,\chi)$. 
Let $\psi$ be a primitive Dirichlet character of conductor $m_{\psi} >1$ as above and 
$M={\rm lcm}(N,m_{\psi}^{2},m_{\psi} m_{\chi})$. Then 
$f_{\psi}\in H^{\#}_k(M,\chi \psi^{2})$.
\begin{proof}
Put $m=m_{\psi}$. By using Lemma \ref{lemma:commutation}, we have 
%and observe that 
%\[
%\Gamma(Nm^2)\subset T^{-u/m}\Gamma_0(N)T^{u/m},
%\] 
%for any $u=1,2 \ldots m$. Here $\Gamma(Nm^2)$ is the principle congruence subgroup of level $Nm^2$. 
%We have $(f|_kT^{u/m})|_k\gamma=f|_kT^{u/m}$ for every $\gamma\in\Gamma(Nm^2)$. 
$\Delta_k(f|_kT^{u/m}) = \Delta_k(f)|_kT^{u/m} = 0$ and therefore Lemma \ref{lemma 5.2} will give us
$\Delta_k(f_\psi) = 0$. In view of Remark \ref{lemma 4.1}, by using Lemma \ref{lemma 5.2} the cusp conditions for $f_\psi$ follows. 
Now we only
need to prove the required transformation property for $f_\psi$. 
%get that 
%$f|_kT^{u/m}\in H^{\#}_k(\Gamma(Nm^2))$. 
%Now by using Lemma \ref{lemma 5.2} we get that $f_{\psi} \in H^{\#}_k(\Gamma(Nm^2))$. 
Let 
\[
\gamma=\begin{pmatrix}
a&b\\cM&d
\end{pmatrix}\in\Gamma_0(M).
\] 
Then
\[
T^{u/m}\gamma T^{-d^2u/m} = \begin{pmatrix}
a'&b'\\c'&d'
\end{pmatrix}\in\Gamma_0(M)\subseteq\Gamma_0(N),
\]
for any $u=1,2 \ldots m$. We have $d'=d-c d^{2} u M / m \equiv d\pmod{m_{\chi}}$. Therefore we get
\[
f|_kT^{u/m}\gamma=\chi(d)f|_kT^{d^2u/m}.
\]
Now by using Lemma \ref{lemma 5.2} and the fact that $\gcd (d,m)=1$, we get 
\[
\left.f_{\psi}\right|_k \gamma=\chi(d) \tau(\overline{\psi})^{-1} \sum_{u=1}^{m} \overline{\psi}(u) \left.f\right|_k T^{d^2 u / m}=\chi(d)\psi(d^{2}) f_{\psi}.
\]
\end{proof}
\end{prop}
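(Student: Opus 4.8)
The plan is to deduce all three defining properties of $f_\psi$ from the corresponding properties of $f$ by writing the twist as a finite combination of slash operators. Concretely, Lemma \ref{lemma 5.2} gives
\[
f_\psi = \tau(\overline\psi)^{-1}\sum_{u=1}^{m}\overline\psi(u)\bigl(f|_k T^{u/m}\bigr), \qquad m := m_\psi,
\]
with each $T^{u/m}\in\mathrm{GL}_2^+(\mathbb{Q})$. The harmonicity and the growth conditions are then immediate. Since $\Delta_k$ commutes with the slash operator (Lemma \ref{lemma:commutation}), each summand satisfies $\Delta_k(f|_k T^{u/m})=\Delta_k(f)|_k T^{u/m}=0$, so $\Delta_k(f_\psi)=0$ by linearity; and by Remark \ref{lemma 4.1} each $f|_k T^{u/m}$ has the same polynomial growth as $f$ at every cusp, whence $f_\psi$ has at most polynomial growth at every cusp of $\Gamma_0(M)$. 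Thus only the transformation law under $\Gamma_0(M)$ remains to be established.

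For the transformation law I would use the standard conjugation trick. Fix $\gamma=\left(\begin{smallmatrix}a&b\\cM&d\end{smallmatrix}\right)\in\Gamma_0(M)$ and consider the matrix $T^{u/m}\gamma\,T^{-d^2u/m}$. The key claim is that this matrix lies in $\Gamma_0(N)$: it has determinant $1$ and lower-left entry $cM\equiv 0\pmod M$, and its remaining entries are \emph{integers} precisely because $m^2\mid M$ clears the quadratic-in-$u$ denominator, while the relation $ad-bcM=1$ turns the remaining $u/m$ term into $-bcdu(M/m)$, an integer since $m\mid M$. Granting this, the level-$N$ modularity of $f$ yields $f|_k\bigl(T^{u/m}\gamma\,T^{-d^2u/m}\bigr)=\chi(d')f$, where $d'=d-cd^2u(M/m)$ is the lower-right entry. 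Here the third divisibility built into $M$ enters: since $m\,m_\chi\mid M$ we have $m_\chi\mid M/m$, hence $d'\equiv d\pmod{m_\chi}$ and $\chi(d')=\chi(d)$. This gives the pointwise identity $f|_k T^{u/m}\gamma=\chi(d)\,f|_k T^{d^2u/m}$.

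Substituting this back into the expression from Lemma \ref{lemma 5.2} and acting by $\gamma$ term-by-term produces
\[
f_\psi|_k\gamma=\chi(d)\,\tau(\overline\psi)^{-1}\sum_{u=1}^{m}\overline\psi(u)\bigl(f|_k T^{d^2u/m}\bigr).
\]
Because $\det\gamma=1$ and $m\mid M$ force $\gcd(d,m)=1$, the map $u\mapsto d^2u\pmod m$ permutes the residues modulo $m$; and since $f|_k T^{r}$ depends only on $r\bmod 1$ (the Fourier expansion \eqref{3.1} is $1$-periodic in $u=\mathrm{Re}(\tau)$), reindexing the sum by $u'\equiv d^2u\pmod m$ together with $\overline\psi(d^{-2})=\psi(d^2)$ recovers $f_\psi$ and contributes the factor $\psi(d^2)$. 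This yields $f_\psi|_k\gamma=\chi(d)\psi(d^2)\,f_\psi=(\chi\psi^2)(d)\,f_\psi$, the required transformation law (note $(\chi\psi^2)(-1)=\chi(-1)=(-1)^k$, so $\chi\psi^2$ is an admissible character modulo $M$). I expect the main obstacle to be the integrality bookkeeping for the conjugated matrix in the second paragraph: one must track the denominators $m$ and $m^2$ in every entry and verify that the three divisibilities packaged into $M=\mathrm{lcm}(N,m^2,m\,m_\chi)$ are exactly what is needed both to land in $\Gamma_0(N)$ and to force $d'\equiv d\pmod{m_\chi}$. Everything else is a formal consequence of Lemmas \ref{lemma:commutation} and \ref{lemma 5.2}.
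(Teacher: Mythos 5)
Your proposal is correct and follows essentially the same route as the paper's own proof: harmonicity and growth via Lemma \ref{lemma:commutation}, Remark \ref{lemma 4.1} and the decomposition of Lemma \ref{lemma 5.2}, and the transformation law via the conjugation $T^{u/m}\gamma T^{-d^2u/m}$ with the congruence $d'\equiv d\pmod{m_\chi}$ and the reindexing $u\mapsto d^2u\pmod m$. Your extra bookkeeping on why each of the three factors in $M=\mathrm{lcm}(N,m_\psi^2,m_\psi m_\chi)$ is needed for integrality is a welcome elaboration of a step the paper leaves implicit, but it is not a different argument.
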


\begin{prop}
Let $f$ and $\psi$ be as in Proposition \ref{prop:fpsitrans}. Let $g=\left.f\right|_k \omega_{N}$.
If $\gcd(m_\psi, N)=1$ then
\[
\left.f_{\psi}\right|_k\omega(N m_\psi^{2})=C_{\psi} g_{\bar{\psi}},
\]
where
\begin{equation}
\begin{split}
C_{\psi}=C_{N, \chi, \psi} &=\chi(m_\psi) \psi(-N) \tau(\psi) / \tau(\overline{\psi}) =\chi(m_\psi) \psi(N) \tau(\psi)^{2} / m_\psi.
\end{split}
\label{5.6}
\end{equation}
\label{thm 5.4}
\begin{proof}
Put $m = m_\psi$. Let $u$ be an integer such that gcd$(m,Nu)=1$. Choose $n,v\in\mathbb{Z}$ such that $nm-Nuv=1$. Then from \cite[Eq. 4.3.22]{miyake}, we have 
\[
T^{u / m} \omega\left(N m^{2}\right)=m \cdot \omega(N)\begin{pmatrix}
m & -v \\
-u N & n
\end{pmatrix} T^{v / m}.
\] 
By using Proposition \ref{prop:hashproperty} $(i)$, we have 
\[
f|_kT^{u / m} \omega\left(N m^{2}\right)=\chi(m)g|_kT^{v/m}.
\]
Thus by using Lemma \ref{lemma 5.2} we have
\[
\begin{aligned}
\tau(\overline{\psi}) f_{\psi}|_k \omega(N m^{2}) &=\sum_{u=1}^{m} \overline{\psi}(u) f|_k T^{u / m} \omega\left(N m^{2}\right) \\
&=\chi(m) \sum_{v=1}^{m} \psi(-N v) g|_k T^{v / m} \\&=
\chi(m) \psi(-N) \sum_{v=1}^{m} \psi(v) g|_k T^{v / m} \\
&=\chi(m) \psi(-N) \tau(\psi) g_{\overline{\psi}}.
\end{aligned}
\]
\end{proof}
\end{prop}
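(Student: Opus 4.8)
The plan is to exploit Lemma \ref{lemma 5.2}, which expresses the twist $f_\psi$ as a weighted average of translates $f|_k T^{u/m}$ (with $m = m_\psi$), and to understand the effect of the Fricke involution $\omega(Nm^2)$ on each such translate separately. Concretely, I would compute $f|_k T^{u/m}\omega(Nm^2)$ for each $u$ and then reassemble a twist of $g$. The guiding idea is to factor the matrix $T^{u/m}\omega(Nm^2)$ as a scalar multiple of $\omega(N)$ times an element of $\Gamma_0(N)$ times another translate $T^{v/m}$, so that the known transformation behaviour of $g = f|_k\omega(N)$ can be invoked term by term.

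First I would restrict attention to those $u$ coprime to $m$, since $\overline{\psi}(u) = 0$ otherwise and those terms drop out; because $\gcd(m,N) = 1$ this is the same as $\gcd(m, Nu) = 1$. For such $u$, Bézout's lemma provides integers $n, v$ with $nm - Nuv = 1$, and the matrix identity \cite[Eq. 4.3.22]{miyake} gives
\[
T^{u/m}\,\omega(Nm^2) = m\cdot \omega(N)\begin{pmatrix} m & -v \\ -uN & n \end{pmatrix} T^{v/m}.
\]
The leading scalar $m$ acts trivially under $|_k$, and the middle matrix $\gamma_0 = \left(\begin{smallmatrix} m & -v \\ -uN & n \end{smallmatrix}\right)$ has determinant $nm - Nuv = 1$ with lower-left entry $-uN \equiv 0 \pmod N$, hence lies in $\Gamma_0(N)$. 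Since $g = f|_k\omega(N) \in H^{\#}_k(N,\overline{\chi})$ by Proposition \ref{prop:hashproperty}(i), the modular transformation law yields $g|_k\gamma_0 = \overline{\chi}(n)\,g$; and from $nm \equiv 1 \pmod N$ one has $n \equiv m^{-1} \pmod N$, so $\overline{\chi}(n) = \chi(m)$. This produces the clean per-term identity $f|_k T^{u/m}\omega(Nm^2) = \chi(m)\, g|_k T^{v/m}$.

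Next I would sum over $u$ against the weights $\overline{\psi}(u)$ and change the summation variable from $u$ to $v$. The relation $nm - Nuv = 1$ forces $Nuv \equiv -1 \pmod m$, so as $u$ runs over the units modulo $m$ so does $v$, with $u \equiv -(Nv)^{-1} \pmod m$. Tracking characters then gives $\overline{\psi}(u) = \overline{\psi}(-1)\,\overline{\psi}(Nv)^{-1} = \psi(-Nv)$, whence
\[
\tau(\overline{\psi})\, f_\psi|_k\omega(Nm^2) = \chi(m)\sum_{v} \psi(-Nv)\, g|_k T^{v/m} = \chi(m)\,\psi(-N)\sum_v \psi(v)\, g|_k T^{v/m}.
\]
Recognising the last sum as $\tau(\psi)\, g_{\overline{\psi}}$ via Lemma \ref{lemma 5.2} applied to $g$ and $\overline{\psi}$, and dividing by $\tau(\overline{\psi})$, gives $f_\psi|_k\omega(Nm^2) = C_\psi\, g_{\overline{\psi}}$ with $C_\psi = \chi(m)\psi(-N)\tau(\psi)/\tau(\overline{\psi})$; the alternative form $\chi(m)\psi(N)\tau(\psi)^2/m$ then follows from the standard evaluation $\tau(\psi)\tau(\overline{\psi}) = \psi(-1)m$ for primitive $\psi$.

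The main obstacle is not any single deep step but the careful bookkeeping of character values through the change of variable $u \mapsto v$: one must verify simultaneously that $\overline{\chi}(n) = \chi(m)$ (using $nm \equiv 1 \bmod N$) and that $\overline{\psi}(u) = \psi(-Nv)$ (using $Nuv \equiv -1 \bmod m$), and confirm that $u \mapsto v$ is a bijection on the units modulo $m$ so that the reindexed sum genuinely rebuilds $g_{\overline{\psi}}$. The coprimality hypothesis $\gcd(m,N) = 1$ is precisely what makes $N$ invertible modulo $m$ and keeps the congruences modulo $N$ and modulo $m$ independent; without it both the factorisation and these character identities would break down.
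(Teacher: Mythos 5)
Your proposal is correct and follows essentially the same route as the paper: the same factorisation $T^{u/m}\omega(Nm^2)=m\cdot\omega(N)\gamma_0 T^{v/m}$ from Miyake, the same appeal to Proposition \ref{prop:hashproperty}(i) and Lemma \ref{lemma 5.2}, and the same reindexing $u\mapsto v$ over units modulo $m$. You simply spell out the bookkeeping (that $\gamma_0\in\Gamma_0(N)$, that $\overline{\chi}(n)=\chi(m)$, that $\overline{\psi}(u)=\psi(-Nv)$, and the Gauss sum identity for the second form of $C_\psi$) that the paper leaves implicit.
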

\begin{thm}
Let $f$ and $\psi$ be as in Proposition \ref{prop:fpsitrans} and let $g=f|_k \omega(N)$.
Suppose $\gcd(m_\psi, N)=1$. Then each one of the Dirichlet series 
$\Lambda_{N}(f,s,\psi),\Lambda_{N}(g,s,\bar{\psi})$ and $\Omega_{N}(f,s,\psi)$, $\Omega_{N}(g,s,\bar{\psi})$ can be analytically continued to the whole s-plane, is bounded on any vertical strip, and satisfies the following 
functional equation:
\begin{equation*}
\begin{split}
\Lambda_{N}(f,s,\psi)=i^{k} C_{\psi} \Lambda_{N}(g,k-s ,\bar{\psi}),\\
\Omega_{N}(f,s,\psi)=-i^{k} C_{\psi} \Omega_{N}(g,k-s ,\bar{\psi}),
\end{split}
\label{5.7}
\end{equation*}
where $C_{\psi}$ is the constant in \eqref{5.6}. Moreover, we have the direct part of Theorem \ref{thm:main}. 
\label{thm 5.5}
\begin{proof}
By using Proposition \ref{thm 5.4}, we have 
\[
\left.f_{\psi}\right|_k \omega\left(N m_\psi^{2}\right)=C_{\psi} g_{\bar{\psi}}.
\]
The claimed analytic properties now follows from Proposition \ref{prop 5.1}. Combining this with the direct part of Theorem \ref{thm 4.8} we have $(1) \implies (2)$ in Theorem \ref{thm:main}. 
\end{proof}
\end{thm}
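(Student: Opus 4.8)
The plan is to derive this theorem not by any fresh analysis but by assembling the two preceding propositions, since all the genuinely hard work has already been discharged in Theorem~\ref{thm 4.8}. First I would invoke Proposition~\ref{thm 5.4}, whose hypotheses coincide exactly with those assumed here; in particular the coprimality condition $\gcd(m_\psi,N)=1$ is precisely what permits the factorization $T^{u/m}\omega(Nm^2)=m\cdot\omega(N)\left(\begin{smallmatrix}m&-v\\-uN&n\end{smallmatrix}\right)T^{v/m}$ used in its proof. This yields the clean Fricke transformation law
\[
f_\psi|_k\,\omega(Nm_\psi^2)=C_\psi\,g_{\bar\psi},
\]
with $C_\psi$ the constant in \eqref{5.6}.

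Next I would feed this identity into Proposition~\ref{prop 5.1}, which is phrased as an equivalence whose statement~(1) is exactly the transformation law just obtained. Its statement~(2) then delivers at once the analytic continuation to the whole $s$-plane, the boundedness in vertical strips, and the functional equations
\[
\Lambda_N(f,s,\psi)=i^k C_\psi\,\Lambda_N(g,k-s,\bar\psi),\qquad \Omega_N(f,s,\psi)=-i^k C_\psi\,\Omega_N(g,k-s,\bar\psi).
\]
The engine behind Proposition~\ref{prop 5.1} is Theorem~\ref{thm 4.8} applied with $f\mapsto f_\psi$, $g\mapsto C_\psi g_{\bar\psi}$ and level $Nm_\psi^2$, so no new estimate is required at this stage: the Mellin inversion, the Phragm\'en--Lindel\"of argument, and the decay bound of Lemma~\ref{lemma:decay} have all been established there.

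Finally, to obtain the full implication $(1)\Rightarrow(2)$ of Theorem~\ref{thm:main}, I would combine the above with the untwisted case: the direct part of Theorem~\ref{thm 4.8}, applied to $f$ and $g=f|_k\omega(N)$, supplies condition~(2)(a), while the twisted functional equations just derived supply condition~(2)(b) for every primitive $\psi$ of conductor coprime to $N$, hence in particular for conductors in $\mathcal{P}$. I do not expect a substantive obstacle here, since the analytic difficulty has been front-loaded into Theorem~\ref{thm 4.8} and the interaction of twisting with the Fricke involution into Proposition~\ref{thm 5.4}. The one point that repays attention — and the closest thing to an obstacle — is the matching of completed series across the two levels: one must check that $\Lambda_{Nm_\psi^2}(f_\psi,s)$ equals $\Lambda_N(f,s,\psi)$ verbatim, which rests on the identity $\sqrt{Nm_\psi^2}=m_\psi\sqrt N$ built into the normalizing factor, together with the linearity of $\Lambda$ sending $C_\psi g_{\bar\psi}$ to $C_\psi\,\Lambda_N(g,\cdot,\bar\psi)$, so that the factor $i^k C_\psi$ lands in the correct position. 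Once this bookkeeping is confirmed, the assembly is complete.
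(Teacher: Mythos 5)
Your proposal is correct and follows exactly the paper's own argument: apply Proposition \ref{thm 5.4} to obtain $f_\psi|_k\,\omega(Nm_\psi^2)=C_\psi g_{\bar\psi}$, feed this into Proposition \ref{prop 5.1} (i.e.\ Theorem \ref{thm 4.8} at level $Nm_\psi^2$) for the twisted analytic properties, and combine with the direct part of Theorem \ref{thm 4.8} for condition (2)(a). Your closing remark about the level bookkeeping $\sqrt{Nm_\psi^2}=m_\psi\sqrt{N}$ is a correct observation that the paper leaves implicit in its definition of $\Lambda_N(f,s,\psi)=\Lambda_{Nm_\psi^2}(f_\psi,s)$.
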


\subsection{Proof of Theorem \ref{thm:main}: $(2) \implies (1)$}

Suppose $f$ and $g$ are two formal Fourier series given by \eqref{eq:fgfourier}
satisfying the condition $(2)$ of Theorem \ref{thm:main}. From Lemma \ref{lemma 4.7}, we get that
the Fourier series $f$ and $g$ are absolutely and uniformly convergent on compact subsets of 
$\mathbb{H}$ and define real analytic functions on $\mathbb{H}$. As in the proof of the converse part of 
Theorem \ref{thm 4.8}, by applying $\Delta_k$ on the given 
Fourier series expansions of $f$ and $g$ we get that $\Delta_k(f) = 0 = \Delta_k(g)$.  
By using the converse part of Theorem \ref{thm 4.8} and Proposition \ref{prop 5.1}, we have
\begin{equation*}\label{eq:prelim}
f|_k \omega(N) = g, \ \ f_{\psi}|_k \omega(N m_\psi^{2})=C_{\psi} g_{\bar{\psi}},
\end{equation*} 
for any primitive Dirichlet character $\psi$ with conductor $m_\psi \in \mathcal{P}$. Therefore to establish the converse part of Theorem \ref{thm:main}, we need to verify the required transformation properties with respect 
to $\Gamma_0(N)$-matrices and the cusp conditions. First we prove Proposition \ref{thm 5.11} which will ensure the required cusp conditions by using Lemma \ref{lemma 4.7} if we have the modularity.  

\begin{lemma}
Let $f:\mathbb{H}\rightarrow\mathbb{C}$ be a smooth function given by the Fourier series
\[
f(\tau)=\sum_{n\ll -\infty}c_f^+(n)q^{\frac{n+\kappa}{t}}+c_f^-(0)v^{1-k}q^{\kappa/t}+\sum_{\substack{n\gg \infty\\n\neq 0}}c^-_f(n)\Gamma(1-k,-4\pi nv/t)q^{\frac{n+\kappa}{t}}, 
\]
for some positive real numbers $\kappa, t$, converging uniformly on any compact subset of $\mathbb{H}$. 
Then for any $\tau_0=u_0+iv_0\in\mathbb{H}$, we have 
\[
\frac{1}{t}\int_{\tau_0}^{\tau_0+t}f(\tau)e^{-2\pi i(n+\kappa)\tau/t}d\tau=\begin{cases}
c_f^+(n)+c_f^-(n)\Gamma(1-k,-4\pi nv_0/t)&\text{if $n\neq 0$},\\
c_f^+(0)+c_f^-(0)v_0^{1-k}&\text{if $n=0$}.
\end{cases}
\]
\label{lemma 5.10}
\begin{proof}
Let $m\neq 0$. Then we have 
\[
\begin{split}
\frac{1}{t}\int_{\tau_0}^{\tau_0+t}f(\tau)e^{-2\pi i(m+\kappa)\tau/t}d\tau&=\sum_{n=-\infty}^{\infty}c_f^+(n)
e^{2\pi (m-n)v_0/t}\frac{1}{t}\int_{u_0}^{u_0+t}e^{-2\pi i(m-n)u/t}du\\&+c_f^-(0)v_0^{1-k}e^{2\pi mv_0/t}\frac{1}{t}\int_{u_0}^{u_0+t}e^{-2\pi imu/t}du\\&+\sum_{\substack{n=-\infty\\n\neq 0}}^{\infty}c^-_f(n)\Gamma(1-k,-4\pi nv_0/t)e^{2\pi (m-n)v_0/t}\frac{1}{t}\int_{u_0}^{u_0+t}e^{-2\pi i(m-n)u/t}du\\&=c_f^+(m)+c_f^-(m)\Gamma(1-k,-4\pi mv_0/t).
\end{split}
\]
The case $m=0$ follows similarly.
\end{proof} 
\end{lemma}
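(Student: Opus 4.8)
The plan is to recover the coefficient indexed by $m$ by integrating $f$ against the character $e^{-2\pi i(m+\kappa)\tau/t}$ along the horizontal segment from $\tau_0$ to $\tau_0+t$ and then invoking the orthogonality of exponentials. Since $\tau_0+t=(u_0+t)+iv_0$, the path of integration is the horizontal segment $\tau=u+iv_0$ with $u\in[u_0,u_0+t]$, on which $v=v_0$ is constant and $d\tau=du$; this segment is a compact subset of $\mathbb{H}$, which is precisely what the hypothesis on uniform convergence is there to exploit.

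First I would substitute the given Fourier expansion into the integral and interchange the (two-sided) summation with the integral, justified by uniform convergence on the compact segment. Writing $q^{(n+\kappa)/t}=e^{2\pi i(n+\kappa)\tau/t}$, each holomorphic and each nonholomorphic term reduces to the single elementary computation
\[
\frac{1}{t}\int_{u_0}^{u_0+t} e^{2\pi i(n+\kappa)\tau/t}\,e^{-2\pi i(m+\kappa)\tau/t}\,du
= e^{-2\pi(n-m)v_0/t}\,\frac{1}{t}\int_{u_0}^{u_0+t} e^{2\pi i(n-m)u/t}\,du = \delta_{nm},
\]
where the prefactor equals $1$ when $n=m$. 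Thus every term with $n\neq m$ is annihilated, and only the $n=m$ term survives. For the nonholomorphic contributions the factor $\Gamma(1-k,-4\pi nv/t)$ is constant on the segment (equal to $\Gamma(1-k,-4\pi nv_0/t)$) and rides along unchanged, so setting $n=m$ produces $c_f^+(m)+c_f^-(m)\Gamma(1-k,-4\pi mv_0/t)$, which is the first case.

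For $m=0$ the same orthogonality isolates the $n=0$ part: the holomorphic term gives $c_f^+(0)$, while the middle term $c_f^-(0)v^{1-k}q^{\kappa/t}$ has $v^{1-k}=v_0^{1-k}$ constant on the segment and, after the character $e^{-2\pi i\kappa\tau/t}$ cancels $q^{\kappa/t}$, contributes $c_f^-(0)v_0^{1-k}$; this is the second case. The only point genuinely needing care is the term-by-term interchange of sum and integral, which is exactly guaranteed by the assumed uniform convergence on compact subsets; after that, the argument is the standard orthogonality computation and presents no analytic obstacle, the sole remaining work being the bookkeeping that separates the constant term $c_f^-(0)v^{1-k}q^{\kappa/t}$ from the generic terms.
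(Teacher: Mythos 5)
Your proposal is correct and follows essentially the same route as the paper: substitute the expansion, interchange sum and integral using the uniform convergence on the compact horizontal segment, and apply orthogonality of the exponentials $e^{2\pi i(n-m)u/t}$ so that only the $n=m$ term (with its constant factors $e^{2\pi(m-n)v_0/t}$ and $\Gamma(1-k,-4\pi nv_0/t)$ or $v_0^{1-k}$) survives. No gaps.
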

\begin{prop}
Let $f:\mathbb{H}\rightarrow\mathbb{C}$ be a smooth function. Assume that $f$ satisfies the first and second conditions of Definition \ref{defn:harmonic}. If $f(\tau)=O(v^{-\sigma})$ as $v\to 0$ for some $\sigma\geq 0$ uniformly in \emph{Re}$(z)$ then $f$ has at most polynomial growth at every cusp of $\Gamma_0(N)$.
\label{thm 5.11}
\begin{proof}
Since the cusp $i\infty$ is $\Gamma_0(N)$-equivalent to a real cusp, it is enough to establish the lemma for real cusps. Let 
$\rho\in \mathbb{Q}$ be a cusp of $\Gamma_0(N)$ with width $t$ and parameter $\kappa$.
% which is $\Gamma_0(N)$-inequivalent to $i\infty$. 
Let $\alpha = \left(\begin{smallmatrix}
a&b\\c&d\end{smallmatrix}\right) \in \mathrm{SL}_2(\mathbb{Z}), \ c \neq 0$, such that 
$\rho=\alpha(i\infty)$. 
Then by following the proof of Lemma \ref{fe}, we have  
\[
(f|_k\alpha)(\tau)=\sum_{n=-\infty}^{\infty}c_f^+(n)q^{\frac{n+\kappa}{t}}+c_f^-(0)v^{1-k}q^{\kappa/t}+\sum_{\substack{n=-\infty
\\n\neq 0}}^{\infty}c^-_f(n)\Gamma(1-k,-4\pi nv/t)q^{\frac{n+\kappa}{t}}
\]
which converges uniformly on any compact subset of $\mathbb{H}$. Note that $\kappa \in [0,1)$. Now by using Lemma \ref{lemma 5.10}, for any 
$\tau_0=u_0+i v_0 \in \mathbb{H}$ we have 
\begin{equation}
\frac{1}{t}\int_{\tau_0}^{\tau_0+t}(f|_k\alpha)(\tau)e^{-2\pi i(n+\kappa)\tau/t}d\tau=\begin{cases}
c_f^+(n)+c_f^-(n)\Gamma(1-k,-4\pi nv_0/t)&\text{if $n\neq 0$},\\
c_f^+(0)+c_f^-(0)v_0^{1-k}&\text{if $n=0$}.
\end{cases}
\label{5.8}
\end{equation}
Since $c \neq 0$, We have 
\[
\text{Im}(\alpha\tau)=\frac{v}{|c\tau+d|^2} \rightarrow 0 ~~\text{as}~ v\rightarrow \infty
\]    
uniformly on $|{\rm Re}(\tau)| \leq t/2$.
%$|\text{Re}(\tau)|\leq h/2$. 
Therefore we have 
\[
(f|_k\alpha)(\tau)=(c\tau+d)^{-k}f(\alpha\tau)=O(|c\tau+d|^{-k} \ {\rm Im}(\alpha\tau)^{-\sigma})=O(v^{\sigma-k})~~\text{as}~v\rightarrow\infty
\]
uniformly on $|\text{Re}(\tau)|\leq t/2$. By taking $\tau_0=iy-t/2$, we see that the left hand side of \eqref{5.8} is
$O(y^{\sigma-k}e^{2\pi (n+\kappa)y/t}) \ \ {\rm as}\ y \rightarrow \infty$. Therefore for any $n \in \mathbb{Z}, n \neq 0$,\eqref{5.8} gives us 
\begin{equation}\label{eq:last11}
\lim_{y\to\infty}\left|\frac{c_f^+(n)+c_f^-(n)\Gamma(1-k,-4\pi ny/t)}{y^{\sigma-k}e^{2\pi (n+\kappa)y/t}}\right|<\infty.
\end{equation}
By using the asymptotic relation \eqref{gamma} for $\Gamma(1-k,-4\pi n y/t)$, we have
\[
\Gamma(1-k,-4\pi n y/t) \sim (-4\pi n y /t)^{-k} e^{4 \pi n y/t} \ \ {\rm as}\ y \rightarrow \infty.
\]
As $\kappa \in [0,1)$, if $n<0$ then the second term in \eqref{eq:last11} goes to $0$ but the first term goes to $\infty$ 
as $y \rightarrow \infty$ unless $c_f^+(n) = 0$. We argue similarly in the case of $n > 0$ to get $c_f^-(n) = 0$.  
 %as  taking $y\to\infty$ on both sides and using the asymptotic relation \eqref{gamma} of the incomplete %gamma function %we get $c_f^+(n)=0$\footnote{Note that the cusp parameter $\kappa$ is always $0\leq\kappa<1.$}. Next %suppose $n>0$ %we have that 
%\[
%\lim_{y\to\infty}\left|\frac{c_f^+(n)+c_f^-(n)\Gamma(1-k,-4\pi ny/t)}{y^{\sigma-k}e^{2\pi (n+\kappa)y/t}}\right|<\infty.
%\]
%Again using the asymptotic relation \eqref{gamma} of $\Gamma(s,z)$ we get $c_f^-(n)=0.$ Thus $f$ has polynomial growth %at the cusp $\rho$. It remains to check the Fourier expansion at $i\infty.$ Let $\beta\in\Gamma_0(N)$ be such that 
%$\rho=\gamma i\infty\neq i\infty.$ Then $\rho$ is a real cusp $\Gamma_0(N)$-equivalent to $i\infty.$ Repeating above %process for $\rho=\gamma i\infty$ we see that $f$ has polynomial growth at every cusp of $\Gamma_0(N).$ 
\end{proof}
\end{prop}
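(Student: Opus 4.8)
The plan is to reduce the statement to real cusps and then extract the Fourier coefficients of $f$ at each cusp from an explicit integral, using the growth hypothesis near the real axis to force the unwanted coefficients to vanish. First I would observe that every cusp class of $\Gamma_0(N)$ possesses a representative $\rho=\alpha(i\infty)$ with $\alpha=\left(\begin{smallmatrix}a&b\\c&d\end{smallmatrix}\right)\in\mathrm{SL}_2(\mathbb{Z})$ and $c\neq 0$; this includes the class of $i\infty$ itself, which already contains rational points such as $1/N=\left(\begin{smallmatrix}1&0\\N&1\end{smallmatrix}\right)(i\infty)$. Since polynomial growth is a property of the whole cusp class (modularity relates the expansions at equivalent cusps up to a constant), it is enough to treat such a real cusp $\rho$.

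For a fixed such $\alpha$, I would first produce the Fourier expansion of $f|_k\alpha$. Condition (i) together with Lemma \ref{lemma:commutation} gives $\Delta_k(f|_k\alpha)=\Delta_k(f)|_k\alpha=0$, and the stabilizer of $\rho$ makes the function $e^{-2\pi i\kappa\tau/t}(f|_k\alpha)(\tau)$ periodic of period $t$, where $t$ is the width and $\kappa\in[0,1)$ the cusp parameter. Running the separation-of-variables argument behind Lemma \ref{fe} then gives, convergent uniformly on compacta,
\[
(f|_k\alpha)(\tau)=\sum_{n}c_f^+(n)q^{\frac{n+\kappa}{t}}+c_f^-(0)v^{1-k}q^{\frac{\kappa}{t}}+\sum_{n\neq 0}c_f^-(n)\Gamma(1-k,-4\pi nv/t)q^{\frac{n+\kappa}{t}},
\]
a two-sided expansion over all $n\in\mathbb{Z}$. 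Applying Lemma \ref{lemma 5.10} to a horizontal segment at height $v_0$ isolates, for each $n\neq 0$, the combination $c_f^+(n)+c_f^-(n)\Gamma(1-k,-4\pi nv_0/t)$ as an explicit integral of $f|_k\alpha$.

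The heart of the argument is transferring the growth of $f$ near the real axis to growth of $f|_k\alpha$ at $i\infty$. Since $c\neq 0$, one has $\mathrm{Im}(\alpha\tau)=v/|c\tau+d|^2\to 0$ as $v\to\infty$ uniformly on $|\mathrm{Re}(\tau)|\leq t/2$, so the hypothesis $f(\tau)=O(v^{-\sigma})$ as $v\to 0$ combines with $(f|_k\alpha)(\tau)=(c\tau+d)^{-k}f(\alpha\tau)$ and $|c\tau+d|^2\sim c^2v^2$ to give $(f|_k\alpha)(\tau)=O(v^{\sigma-k})$ as $v\to\infty$ on that strip. Substituting the horizontal path at height $y$ (say $\tau_0=iy-t/2$) into the integral formula and using $|e^{-2\pi i(n+\kappa)\tau/t}|=e^{2\pi(n+\kappa)y/t}$ along it, I obtain $c_f^+(n)+c_f^-(n)\Gamma(1-k,-4\pi ny/t)=O(y^{\sigma-k}e^{2\pi(n+\kappa)y/t})$ as $y\to\infty$.

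Finally I would read off the vanishing of the unwanted coefficients from the asymptotic \eqref{gamma}, namely $\Gamma(1-k,x)\sim x^{-k}e^{-x}$ as $|x|\to\infty$. For $n<0$ the incomplete Gamma term $\Gamma(1-k,-4\pi ny/t)$ decays while the bounding factor $y^{\sigma-k}e^{2\pi(n+\kappa)y/t}$ also decays (because $n+\kappa<0$), so the surviving constant $c_f^+(n)$ must be $0$. For $n>0$ one has $\Gamma(1-k,-4\pi ny/t)\sim(-4\pi ny/t)^{-k}e^{4\pi ny/t}$, growing at exponential rate $e^{4\pi ny/t}$, which dominates the denominator's rate $e^{2\pi(n+\kappa)y/t}$ precisely because $n>\kappa$ (as $n\geq 1$ and $\kappa<1$); this forces $c_f^-(n)=0$. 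What remains is exactly the polynomial-growth shape \eqref{3.1} at $\rho$, which is what we wanted. I expect this last sign-sensitive comparison of exponential rates against the cusp parameter — valid only because $\kappa\in[0,1)$ — to be the main obstacle, with the Fourier expansion and the growth transfer serving as the supporting technical inputs.
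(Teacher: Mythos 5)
Your proposal is correct and follows essentially the same route as the paper's proof: reduction to a real cusp representative $\alpha(i\infty)$ with $c\neq 0$, the two-sided Fourier expansion of $f|_k\alpha$, coefficient extraction via the integral formula of Lemma \ref{lemma 5.10} along a horizontal segment at height $y$, the growth transfer $(f|_k\alpha)(\tau)=O(v^{\sigma-k})$ coming from $\mathrm{Im}(\alpha\tau)\to 0$, and the final comparison of exponential rates using $\kappa\in[0,1)$ and the asymptotic \eqref{gamma} to force $c_f^+(n)=0$ for $n<0$ and $c_f^-(n)=0$ for $n>0$. The only (harmless) difference is that you phrase the $n<0$ case as the bounded quantity converging to the constant $c_f^+(n)$, whereas the paper divides through and lets the constant term blow up; both are valid.
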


We now proceed to establish the modularity of $f$. 
Let us begin by introducing some terminology. For two integers
$m, v$ with gcd$(m,Nv)=1$, take $n,u\in\mathbb{Z}$ such that $mn-Nuv=1$. Put 
\[
\gamma(m,v)=\begin{pmatrix}
m&-v\\-Nu&n
\end{pmatrix}\in\Gamma_0(N).
\] 
Although the choice of $\gamma(m,v)$ is not unique but for any such matrix, we have
\[
T^{u / m} \omega\left(N m^{2}\right)=m \cdot \omega(N)\gamma(m,v)T^{v / m},
\]
as in the proof of Proposition \ref{thm 5.4}. Moreover, $u\bmod m$ is uniquely determined. 

\begin{lemma}\label{lemma:5.6}
Let $m$ be an odd prime number or 4 coprime to $N $. Suppose that $f$ and $g$ satisfy $f_{\psi}|_k \omega\left(N m^{2}\right)=C_{\psi} g_{\bar{\psi}}$ for all primitive Dirichlet characters $\psi$ mod $m$ with the constant
\[
C_{\psi}=\chi(m) \psi(-N) \tau(\psi) / \tau(\overline{\psi}),
\]
then
\[
\left.g\right|_k(\chi(m)-\gamma(m, u)) T^{u / m}=\left.g\right|_k(\chi(m)-\gamma(m, v)) T^{v / m}
\]
for any two integers $u$ and $v$ coprime to $m$.
\begin{proof}
\begin{comment}
By our assumption we have 
\[
\tau(\overline{\psi})f_{\psi}|\omega(Nm^2)=\tau(\psi)\chi(m)\psi(-N)g_{\bar{\psi}}.
\]
Using Lemma 3.2 on both the sides we get 
\[
\sum_{u=1}^{m} \overline{\psi}(u) f| T^{u / m} \omega\left(N m^{2}\right)=\chi(m) \psi(-N) \sum_{u=1}^{m} \psi(u) g| T^{u / m}.
\]
For each $u$ coprime to $m$ let $v$ be such that $-Nuv\equiv 1\bmod m.$ Then 
\[
\left.f\right| T^{u / m} \omega\left(N m^{2}\right)=\left.g\right| \gamma(m, v) T^{v / m}.
\]
\end{comment}
The proof is about handling the slash operators and therefore it will be exactly on the lines of 
\cite[Proof of Lemma 4.3.13]{miyake}.
\end{proof}
\end{lemma}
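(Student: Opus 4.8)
The plan is to convert the hypothesis into a single linear relation among the functions $g|_k\gamma(m,v)T^{v/m}$ and then to isolate the individual summands by character orthogonality on $(\mathbb{Z}/m\mathbb{Z})^{\times}$. First I would expand both sides of $f_{\psi}|_k\omega(Nm^2)=C_{\psi}g_{\bar\psi}$ using Lemma \ref{lemma 5.2}. Multiplying through by $\tau(\overline{\psi})$, the left side becomes $\sum_{u}\overline{\psi}(u)\,f|_k T^{u/m}\omega(Nm^2)$, while on the right $g_{\bar\psi}=\tau(\psi)^{-1}\sum_{v}\psi(v)\,g|_k T^{v/m}$, and the constant collapses via $\tau(\overline{\psi})\,C_{\psi}\,\tau(\psi)^{-1}=\chi(m)\psi(-N)$, giving $\chi(m)\psi(-N)\sum_{v}\psi(v)\,g|_k T^{v/m}$.

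Next I would rewrite the left side using the identity $T^{u/m}\omega(Nm^2)=m\cdot\omega(N)\gamma(m,v)T^{v/m}$ recalled before the lemma, where $v=v_u$ is determined modulo $m$ by $-Nuv_u\equiv1\pmod m$. Since scalar matrices act trivially under $|_k$ and since $g=f|_k\omega(N)$ is already in force in \S 5.2, each summand equals $g|_k\gamma(m,v_u)T^{v_u/m}$. As $u$ ranges over $(\mathbb{Z}/m\mathbb{Z})^{\times}$ so does $v_u$, so I may reindex by $v$; the only bookkeeping is the character identity $\overline{\psi}(u)=\psi(-N)\psi(v_u)$, which follows at once from $\psi(-N)\psi(u)\psi(v_u)=\psi(-Nuv_u)=\psi(1)=1$. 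Substituting and cancelling the nonzero factor $\psi(-N)$ from both sides yields, for every primitive character $\psi$ modulo $m$,
\[
\sum_{v\in(\mathbb{Z}/m\mathbb{Z})^{\times}}\psi(v)\,G_v=0,\qquad G_v:=g|_k\big(\chi(m)-\gamma(m,v)\big)T^{v/m}.
\]

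Finally I would exploit that $m$ is an odd prime or $4$: for such $m$ every non-principal Dirichlet character modulo $m$ is primitive, so the displayed relation holds for all $\psi\neq\psi_0$. Regarding $v\mapsto G_v$ as a function on $(\mathbb{Z}/m\mathbb{Z})^{\times}$ valued in the real-analytic functions on $\mathbb{H}$, all of its Fourier coefficients against non-trivial characters vanish, so by orthogonality $G_v$ equals the constant $\phi(m)^{-1}\sum_{w}G_w$, independent of $v$. Hence $G_u=G_v$ for all $u,v$ coprime to $m$, which is the assertion. I expect the delicate points to be contained in the second paragraph: checking that $C_{\psi}$ collapses exactly to $\chi(m)\psi(-N)$, that $\overline{\psi}(u)=\psi(-N)\psi(v_u)$ survives the reindexing, and—most crucially—observing that the hypothesis on $m$ is precisely what renders every non-principal character primitive, so that the orthogonality argument applies with full strength.
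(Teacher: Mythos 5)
Your argument is correct and is essentially the paper's proof: the paper simply defers to Miyake's Lemma 4.3.13, whose proof is exactly your computation (twist via Lemma \ref{lemma 5.2}, rewrite $T^{u/m}\omega(Nm^2)$ as $m\cdot\omega(N)\gamma(m,v)T^{v/m}$ using $f|_k\omega(N)=g$, reindex with $\overline{\psi}(u)=\psi(-N)\psi(v_u)$, and conclude by orthogonality since every non-principal character mod an odd prime or $4$ is primitive). You even make explicit the reliance on $f|_k\omega(N)=g$, which the paper leaves implicit in the standing assumptions of \S 5.2.
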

%This Lemma implies the following Proposition :
\begin{lemma}
Let $m$ and $n$ be odd prime numbers or 4. Assume that both $m$ and $n$ are coprime to $N$. 
Then the matrix 
\[
\beta(m,n,u,v):=\begin{pmatrix}
1 & -2 v / m \\
2 u N / n & (4 / m n)-3
\end{pmatrix}
\] 
is an elliptic matrix whose eigenvalues are not roots of unity. Moreover,
if $f$ and $g$ satisfy  $f_{\psi}|_k \omega\left(N m^{2}\right)=C_{\psi} g_{\bar{\psi}}$ for all primitive Dirichlet characters $\psi$ whose conductor $m_{\psi}=m$ or $n,$ with the constant
\[
C_{\psi}=\chi(m) \psi(-N) \tau(\psi) / \tau(\overline{\psi}),
\] then
\[
\left(g\bigg{|}_k\begin{pmatrix}
m&-v\\-Nu&n
\end{pmatrix}-\chi(m)g\right)|_k(1-\beta(m,n,u,v))=0.
\]
\label{prop 5.7}
\begin{proof}
We get this lemma by following the first half of the proof of \cite[Lemma 4.3.14]{miyake} and 
by using Lemma \ref{lemma:5.6} appropriately.
\end{proof}
\end{lemma}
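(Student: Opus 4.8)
The statement has two essentially independent halves: a computation showing that $\beta(m,n,u,v)$ is elliptic with non-torsion eigenvalues, and a transformation identity for $g$. Throughout, $u,v$ are the integers with $mn-Nuv=1$, so that $A:=\left(\begin{smallmatrix} m & -v\\ -Nu & n\end{smallmatrix}\right)=\gamma(m,v)\in\Gamma_0(N)$. The plan is to settle the matrix half by hand and to deduce the transformation identity by combining two instances of Lemma \ref{lemma:5.6}, one for the modulus $m$ and one for the modulus $n$, following the first half of \cite[Lemma 4.3.14]{miyake}.

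For the matrix half, substituting $Nuv=mn-1$ into the entries of $\beta$ gives at once $\det\beta(m,n,u,v)=1$ and $\operatorname{tr}\beta(m,n,u,v)=\frac{4}{mn}-2$, so the eigenvalues are $e^{\pm i\theta}$ with $\cos\theta=\frac{2}{mn}-1$. Since $m,n$ are odd primes or $4$, we have $mn\ge 9$, whence $-1<\cos\theta<0$ and in particular $|\operatorname{tr}\beta|<2$; thus $\beta$ is elliptic. If $e^{i\theta}$ were a root of unity, then $\theta$ would be a rational multiple of $\pi$ with $\cos\theta\in\mathbb{Q}$, and Niven's theorem would force $\cos\theta\in\{0,\pm\tfrac12,\pm1\}$. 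But $\frac{2}{mn}-1$ equals none of these for $mn\ge 9$ (they would require $mn\in\{1,2,4\}$), so the eigenvalues are not roots of unity. I expect this half to be entirely routine.

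For the transformation identity the two algebraic inputs are the following. First, the collapse $\gamma(m,a)\,T^{a/m}=\left(\begin{smallmatrix} m & 0\\ \ast & 1/m\end{smallmatrix}\right)$, immediate from the defining relation of $\gamma(m,a)$, which turns each side of Lemma \ref{lemma:5.6} into a lower-triangular slash. Second, the identity $A^{-1}=\left(\begin{smallmatrix} n & v\\ Nu & m\end{smallmatrix}\right)=\gamma(n,-v)$, valid because $\det A=1$ and $\gcd(v,n)=1$ (the latter from $Nuv\equiv-1\pmod n$); this is what lets us invoke Lemma \ref{lemma:5.6} for the modulus $n$ with the datum $-v$ exactly as for the modulus $m$ with the datum $v$. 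Writing $h:=g|_kA-\chi(m)g$, the mod-$m$ relation reads $h|_kT^{v/m}=-\,g|_k(\chi(m)-\gamma(m,v))T^{v/m}$, a quantity that Lemma \ref{lemma:5.6} declares independent of the chosen representative, and the mod-$n$ relation gives the analogous statement for the companion combination $h':=g|_kA^{-1}-\chi(n)g$, which is tied to $h$ by $h'=-\chi(m)^{-1}\,h|_kA^{-1}$ (here $\chi(n)=\chi(m)^{-1}$ since $mn\equiv1\pmod N$).

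The core step is then to compare these two independence statements against auxiliary representatives and to multiply the resulting slash relations, so that the translations cancel and the surviving matrix is precisely $\beta(m,n,u,v)$, yielding $h|_k(1-\beta)=0$. I expect the main obstacle to be purely organizational: managing the slash-operator algebra for $\mathrm{GL}_2^+(\mathbb{Q})$-matrices (keeping the $\det$-factors balanced, as in Lemma \ref{lemma:commutation}) and verifying the explicit $2\times 2$ identity that exhibits the combined product as $\beta$. No analytic information about $g$ enters this lemma; only the slash relations furnished by Lemma \ref{lemma:5.6} are used, which is why the argument is the same as in \cite{miyake} despite the non-holomorphic, real-analytic nature of $g$.
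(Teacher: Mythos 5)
Your proposal is correct and follows essentially the same route as the paper, which simply defers to the first half of Miyake's Lemma 4.3.14 combined with Lemma \ref{lemma:5.6}; your explicit computation $\det\beta=1$, $\operatorname{tr}\beta=\tfrac{4}{mn}-2\in(-2,0)$ together with the Niven-type argument correctly settles the ellipticity half, and your identification of the key slash identities (the collapse $\gamma(m,a)T^{a/m}=\left(\begin{smallmatrix} m & 0\\ \ast & 1/m\end{smallmatrix}\right)$, the relation $A^{-1}=\gamma(n,-v)$, and $\chi(n)=\chi(m)^{-1}$) is exactly what is needed to run Miyake's combination argument for the modular half. In fact you supply more detail than the paper's own one-line proof does.
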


Here we record the following theorem of Neururer and Oliver (cf. \cite[Theorem 3.11]{NO}) which will be useful in proving our converse theorem.
\begin{thm}
If $h$ is a continuous function on $\mathbb{H}$ that is invariant under two infinite order elliptic matrices with distinct fixed points in $\mathbb{H}$ then it is a constant.
\label{thm 5.8}
\end{thm}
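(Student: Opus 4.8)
The plan is to use the classical fact that an infinite-order elliptic element of $\mathrm{SL}_2(\mathbb{R})$ acts on $\mathbb{H}$ as an irrational rotation about its fixed point; invariance under such an element then forces $h$ to be radially symmetric about that point, and a continuous function that is radially symmetric about two distinct points of $\mathbb{H}$ must be constant.

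First I would fix the geometry. After rescaling to determinant one, let $\gamma_1,\gamma_2\in\mathrm{SL}_2(\mathbb{R})$ be the two elliptic matrices, with distinct fixed points $z_1,z_2\in\mathbb{H}$. For each $j$ choose $\sigma_j\in\mathrm{SL}_2(\mathbb{R})$ with $\sigma_j(i)=z_j$; then $\sigma_j^{-1}\gamma_j\sigma_j$ fixes $i$, hence lies in $\mathrm{SO}(2)$ and acts on $\mathbb{H}$ as a rotation about $z_j$ through an angle $\theta_j$. The hypothesis that $\gamma_j$ has infinite order is precisely the statement that $\theta_j$ is an irrational multiple of $2\pi$. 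Consequently, by density of the orbit of an irrational rotation (Kronecker's theorem), for any $z\neq z_j$ the set $\{\gamma_j^n z:n\in\mathbb{Z}\}$ is dense in the hyperbolic circle $S_j(z)$ centred at $z_j$ through $z$.

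Next I would deduce the radial symmetry. Invariance gives $h(\gamma_j^n z)=h(z)$ for all $n$; since this orbit is dense in $S_j(z)$ and $h$ is continuous, $h$ is constant on $S_j(z)$ with value $h(z)$. Letting $z$ vary, this says that $h(z)$ depends only on the hyperbolic distance $d(z,z_j)$, so there is a (continuous) function $H_j$ on $[0,\infty)$ with $h(z)=H_j\big(d(z,z_j)\big)$, for both $j=1,2$.

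Finally I would combine the two symmetries. Set $d_{12}=d(z_1,z_2)>0$. As $w$ runs over the hyperbolic circle of radius $\rho$ about $z_1$, the distance $d(w,z_2)$ ranges, by the triangle inequality together with the intermediate value theorem (the two extremes being attained where the circle meets the geodesic through $z_1,z_2$), over the whole interval $\big[\,|d_{12}-\rho|,\,d_{12}+\rho\,\big]$; on this circle $h$ is the single value $H_1(\rho)$, so $H_2$ equals the constant $H_1(\rho)$ throughout that interval. As $\rho$ varies over $[0,\infty)$ these intervals overlap (consecutive radii differing by less than $2d_{12}$ give overlapping intervals) and exhaust $[0,\infty)$, so by connectedness $H_1(\rho)$ is independent of $\rho$; hence $H_1$, and therefore $H_2$ and $h$, is constant. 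The only delicate point is the first step---identifying infinite order with an irrational rotation angle and invoking density of the orbit---after which the two-centre argument is elementary hyperbolic geometry; this is exactly the route of \cite[Theorem 3.11]{NO}.
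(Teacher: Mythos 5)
Your proof is correct. Note that the paper itself gives no proof of this statement — it is simply imported as Theorem 3.11 of Neururer–Oliver — and your argument (infinite order forces an irrational rotation angle about each fixed point, density of irrational-rotation orbits plus continuity makes $h$ radially symmetric about both centres, and the two-centre interval-overlap argument then forces constancy) is a sound, self-contained reconstruction along essentially the same lines as that cited source.
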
 

Now we prove the following theorem which is the main step in the proof of the converse part. 
\begin{thm}
Let $m, n, f, g, c_\psi$ be same as in Lemma \ref{prop 5.7}. Then we have
%Let $m$ and $n$ be odd prime numbers or 4. Assume both $m$ and $n$ are coprime to $N$. If $f$ and $g$ %satisfy  $f_{\psi}|_k \omega(N m^{2})=C_{\psi} g_{\bar{\psi}}$ for any primitive Dirichlet character $\psi$ whose %conductor $m_{\psi}=m$ or $n,$ with the constant
%\[
%C_{\psi}=\chi(m) \psi(-N) \tau(\psi) / \tau(\overline{\psi}),
%\] then
\[
\left.g\right|_k \gamma=\overline{\chi}(n) g
\]
for all $\gamma \in \Gamma_{0}(N)$ of the form $\gamma=\begin{pmatrix}
m&-v\\-Nu&n
\end{pmatrix}.$
\label{thm 5.9}
\begin{proof}
Put 
\[
h=g\bigg{|}_k\begin{pmatrix}
m & -v \\
-Nu & n
\end{pmatrix}-\chi(m)g.
\]
Then by using Lemma \ref{prop 5.7} we get that $h$ is invariant under the elliptic matrix 
\[
\beta(m,n,u,v)=\begin{pmatrix}
1 & -2 v / m \\
2 u N / n & (4 / m n)-3
\end{pmatrix}.
\]
Since $\beta(m, n, u,v)$ has eigenvalues which are not roots of unity, it is an infinite order matrix. The fixed point of $\beta(m,n, u, v)$ in $\mathbb{H}$ is given by
\[
z_{1}=i \sqrt{-\frac{n^{2}\left(\frac{1}{mn}-1\right)^{2}}{N^{2} u^{2}}+\frac{n v}{m N u}}-\frac{n\left(\frac{1}{mn}-1\right)}{Nu}.
\]
Let $m'=m-rNu$ be an odd prime for some non-zero integer $r$ such that $m'$ is coprime to $N$ and distinct from 
$m$ and $n$. Put $v'=v - r n$. Then we have 
\[
g\bigg{|}_k\left(\begin{array}{cc}
m' & -v' \\
-Nu & n
\end{array}\right)=g\bigg{|}_k\left(\begin{array}{cc}
1 & r \\
0 & 1
\end{array}\right)\left(\begin{array}{cc}
m & -v \\
-Nu & n
\end{array}\right)=g\bigg{|}_k\left(\begin{array}{cc}
m &- v \\
-Nu & n
\end{array}\right).
\]
Since $\chi(m')=\chi(m)$, by using Lemma \ref{prop 5.7} we see that
\[
h'=g\bigg{|}_k\begin{pmatrix}
m' & -v' \\
-Nu & n
\end{pmatrix}-\chi(m')g=h
\]
is invariant under the elliptic matrix 
\[
\beta(m',n, u,v')=\begin{pmatrix}
1 & -2 v' / m' \\
2 u N / n & (4 /m'n)-3
\end{pmatrix}.
\]
The fixed point of $\beta(m', n, u,v')$ in $\mathbb{H}$ is given by
\[
z_{2}=i \sqrt{-\frac{n^{2}\left(\frac{1}{m'n}-1\right)^{2}}{N^{2} u^{2}}+\frac{nv'}{m' N u}}-\frac{n\left(\frac{1}{m'n}-1\right)}{Nu}.
\]
By comparing the real parts we see that $z_1\neq z_2.$ Thus by Theorem \ref{thm 5.8} we get that $h$ is a constant. 
Put $h(\tau)=c$, a constant in $\mathbb{C}$. Let $\tau_0$ be the unique fixed point of $\beta(m, n, u,v)$ in 
$\mathbb{H}$ and put
\[
\rho:=\frac{1}{\tau_0-\bar{\tau}_0}\begin{pmatrix}
1&-\tau_0\\1&-\bar{\tau}_0
\end{pmatrix}\in \mathrm{GL}(2,\mathbb{C}),
\]
\[
p(w):=\left(\left.h\right|_k \rho^{-1}\right)(w):=(1-w)^{-k}h\left(\rho^{-1} w\right)=(1-w)^{-k}c,~~~w\in\{z\in\mathbb{C}:|z|<1\}.
\]
We write 
\[
\rho\beta(m, n, u,v)\rho^{-1}=\begin{pmatrix}
\zeta&0\\0&\zeta^{-1}
\end{pmatrix},
\]
where $\zeta$ is an eigenvalue of $\beta(m, n, u,v)$. Since $h$ is invariant under $\beta(m, n, u,v)$, we have 
\[
p(\zeta^2w)=\zeta^{-k}p(w).
\] 
In particular, for $w=0$ we have 
$c=\zeta^{-k}c$ which implies that $c=0$ as
$\zeta$ is not a root of unity. We conclude the proof by observing that $\chi(m)={\bar{\chi}}(n)$.
\end{proof}
\end{thm}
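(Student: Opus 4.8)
The plan is to reduce everything to showing that the single function
\[
h := g\big|_k \begin{pmatrix} m & -v \\ -Nu & n \end{pmatrix} - \chi(m)\, g
\]
vanishes identically. Indeed, once $h \equiv 0$ we have $g|_k\gamma = \chi(m)g$, and since $\gamma \in \Gamma_0(N)$ forces $\det\gamma = mn - Nuv = 1$, hence $mn \equiv 1 \pmod N$, we obtain $\chi(m)\chi(n) = 1$, i.e.\ $\chi(m) = \overline{\chi}(n)$, which is the asserted transformation law.

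The first step is to feed $f,g$ into Lemma \ref{prop 5.7}, which at once gives that $h$ is fixed by the infinite-order elliptic matrix $\beta(m,n,u,v)$, whose unique fixed point $z_1 \in \mathbb{H}$ is explicit. To apply the rigidity theorem of Neururer and Oliver (Theorem \ref{thm 5.8}) I need a \emph{second} infinite-order elliptic matrix, with a fixed point distinct from $z_1$, under which the same $h$ is invariant. This is the crux of the argument. I would produce it by perturbing $m$: set $m' = m - rNu$ and $v' = v - rn$ for a nonzero integer $r$. Because $\det\gamma = 1$ forces $\gcd(m, Nu) = 1$, the progression $\{m - rNu : r \in \mathbb{Z}\}$ meets $\mathcal{P}$ by the very definition of $\mathcal{P}$, so $r$ can be chosen with $m'$ an odd prime (or $4$) lying in $\mathcal{P}$, coprime to $N$, and distinct from $m$ and $n$; this is precisely what makes the twisted functional equations, and hence Lemma \ref{prop 5.7}, available for the pair $(m',n)$.

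Next I would note that the new matrix differs from the old one only by a left translation,
\[
\begin{pmatrix} m' & -v' \\ -Nu & n \end{pmatrix} = \begin{pmatrix} 1 & r \\ 0 & 1 \end{pmatrix}\begin{pmatrix} m & -v \\ -Nu & n \end{pmatrix},
\]
so the periodicity $g|_k T^r = g$ (valid for integer $r$ since $g$ has a $q$-expansion) together with $\chi(m') = \chi(m)$ (as $m' \equiv m \pmod N$) shows that the function attached to the new matrix is again $h$. Lemma \ref{prop 5.7} then makes $h$ invariant under $\beta(m',n,u,v')$ as well, and a comparison of the real parts of the two explicit fixed points gives $z_1 \neq z_2$. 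Theorem \ref{thm 5.8} now forces $h$ to be a constant $c$.

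Finally I would show $c=0$ by diagonalizing $\beta := \beta(m,n,u,v)$. Conjugating by the Cayley transform $\rho$ that sends the fixed point $\tau_0$ to the centre of the unit disk turns $\beta$ into $\operatorname{diag}(\zeta,\zeta^{-1})$, and the $\beta$-invariance of $h$ becomes $p(\zeta^2 w) = \zeta^{-k} p(w)$ for $p := h|_k\rho^{-1}$, where $p(w) = (1-w)^{-k}c$. Setting $w=0$ gives $c = \zeta^{-k}c$; since $\zeta$ is not a root of unity and $k \neq 0$, we have $\zeta^{-k} \neq 1$, whence $c = 0$ and $h \equiv 0$. The main obstacle is the construction in the second and third paragraphs: arranging a second elliptic matrix with a genuinely different fixed point while keeping the very same $h$ invariant, which is exactly where the defining property of $\mathcal{P}$ and the translation trick become essential.
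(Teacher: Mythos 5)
Your proposal is correct and follows essentially the same route as the paper's own proof: invoking Lemma \ref{prop 5.7} for the pair $(m,n)$, producing the second elliptic matrix via the translation $m'=m-rNu$, $v'=v-rn$ and the invariance $g|_kT^r=g$, applying Theorem \ref{thm 5.8} after comparing real parts of the fixed points, and killing the constant by diagonalizing $\beta$. Your explicit appeal to the defining property of $\mathcal{P}$ to guarantee that $m'$ can be chosen so that Lemma \ref{prop 5.7} applies to $(m',n)$ is in fact slightly more careful than the paper's wording, but the argument is the same.
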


Finally, we deduce the modularity of $f$  from the modularity of $g$ as in the proof of 
Lemma \ref{lemma 4.2}. 
We get the modularity of $g$ by proceeding exactly similar to the proof of the modular property in \cite[Theorem 4.3.15]{miyake} and by making use of Theorem \ref{thm 5.9} appropriately.

\bigskip

\noindent{\bf {Acknowledgements.}}
We would like to thank the anonymous referees for detailed comments and suggestions which led to some mathematical corrections and an improvement of the presentation.

\bigskip 

\noindent{{\bf Data availability.}} 
Data sharing not applicable to this article as no datasets were generated or analysed during the current study.

\bibliographystyle{plain}

\bibliography{mybib}

\begin{thebibliography}{9}
\bibitem{AW}
G. B. Arfken and H. Weber, Mathematical methods for physicists (english summary),
Fifth edition, Harcourt/Academic Press, Burlington, MA (2001).
\bibitem{BO} 
K. Bringmann and K. Ono, {\em Coefficients of harmonic Maass forms}, Partitions, $q$-Series, and Modular Forms, Dev. Math. (Springer) {\bf 23} (2012), 23--38.
\bibitem{Ono1} 
K. Bringmann, A. Folsom, K. Ono and L. Rolen, {Harmonic Maass forms and mock modular forms: theory and applications}, 
American Mathematical Society (2017).
\bibitem{BF} 
J. H. Bruinier and J. Funke, {\em On two geometric theta lifts}, Duke Math. J. {\bf 125} (2004), 45--90.
\bibitem{CP}
J. Cogdell and I. Piatetski-Shapiro, {\em Converse theorems for $GL_n$},
Inst. Hautes \'Etudes Sci. Publ. Math.  {\bf 79} (1994), 157--214.
\bibitem{CS}
H. Cohen and F. Str\"{o}mberg, {Modular forms: A classical approach}, American Mathematical Society (2017).
%\bibitem{FGD}
%P. Flajolet, X. Gourdon and P. Dumas, {\em Mellin Transforms and Asymptotics: Harmonic Sums},  Theoretical %Computer Science, {\bf 144} (1995) (1--2): 3--58.
\bibitem{GH}
D. Goldfeld and J. Hundley, {Automorphic representations
and $L$-functions for the general linear group}, Cambridge University Press (2011).
\bibitem{FJ} F. John, {Plane waves and spherical means}, Springer (1955). 
\bibitem{Hecke} 
E. Hecke, {\em \"Uber die Bestimmung Dirichletscher Reihen durch
ihre Funktionalgleichung}, Math. Ann. {\bf 112} (1936), 664--699.
\bibitem{SH} 
S. Herrero, {\em A converse theorem for Jacobi-Maass forms and applications}, 
J. Number Theory {\bf 169} (2016), 41--61. 
\bibitem{HP}
S. Herrero and A.V. Pippich, {\em Mock modular forms whose shadows are Eisenstein series of integral weight}, 
Math. Res. Lett. {\bf 27} (2020), 435--463.
\bibitem{JL}
H. Jacquet and R. Langlands, {\em Automorphic forms on $GL(2)$}, Lecture Notes in Mathematics, Springer-Verlag, Berlin-New York, {\bf 114} (1970).
\bibitem{JPS}
H. Jacquet, I. Piatetski-Shapiro and J. Shalika, {\em Automorphic forms
on $GL(3)$} I, Annals of Math. {\bf 109} (1979), 169--212.
\bibitem{KMS}
W. Kohnen, Y. Martin and K. D. Shankhadhar,
{\em A converse theorem for Jacobi cusp forms of degree two}, 
Acta Arith. {\bf 189} (2019), 223--262.
%\bibitem{I}
%K. Imai, \emph{Generalization of Hecke's correspondence to Siegel
%modular forms}, Amer. J. Math. {\bf 102} (1980), 903--936.
\bibitem{Maass}
H. Maass, {\em \"Uber eine neue Art von nichtanalytischen
automorphen Funktionen und die Bestimmung Dirichletscher Reihen
durch Funktionalgleichungen}, Math. Ann. {\bf 121} (1949), 141--183.
\bibitem{martin96}
Y. Martin, \emph{A converse theorem for Jacobi forms}, J. Number Theory
{\bf 61} (1996), 181--193.
\bibitem{martin-osses}
Y. Martin and D. Osses, \emph{On the analogue of Weil's converse theorem
for Jacobi forms and their lift to half-integral weight modular forms},
Ramanujan J. {\bf 26} (2011), 155--183.
\bibitem{miyake}
T. Miyake, {Modular forms}, Springer Monographs in Mathematics (1976).
\bibitem{Ram murty} 
M. R. Murty, M. Dewar and H. Graves, {Problems in the theory of modular forms}, Institute of Mathematical Sciences Lecture Notes, 1. Hindustan Book Agency, New Delhi (2015).
\bibitem{NO}
M. Neururer and T. Oliver, {\em Weil's converse theorem for Maass forms and cancellation of zeros}, Acta Arith.
{\bf 196} (2020), 387--422.
\bibitem{Ra}
R. Rankin, {Modular forms and functions}, Cambridge University Press (1977).
\bibitem{RKS}
R. K. Singh, {\em Applications of number theory in string theory}, MS Thesis (2020) (\url{https://ranveer14.github.io/MS_Thesis.pdf}), IISER Bhopal (India). 
\bibitem{Weil}
A. Weil, {\em \"Uber die Bestimmung Dirichletscher Reihen durch
ihre Funktionalgleichung}, Math. Ann. {\bf168} (1967), 149--156.
\bibitem{SZ}
S. Zwegers, {\em Mock theta functions}, Ph. D. thesis (2002), Universiteit Utrecht.
\end{thebibliography}

\addcontentsline{toc}{chapter}{Bibliography}

\end{document}